\documentclass[a4paper]{amsart}
\usepackage[english]{babel}
\usepackage{amsmath,amssymb,amsthm}

\usepackage[pdftex]{color}
\usepackage{graphicx}
\usepackage{xcolor}
\usepackage[bookmarks=true,hyperindex,pdftex,colorlinks, citecolor=blue,linkcolor=blue, urlcolor=blue]{hyperref}

\hypersetup{pdftitle={The Bishop-Phelps-Bollob\'as property for numerical radius},
pdfauthor={Sun Kwang Kim, Han Ju Lee, Miguel Mart\'{\i}n and \'Oscar, Rold\'an},pdfpagemode=UseOutlines}

\theoremstyle{plain}
\newtheorem{theorem}{Theorem}[section]
\newtheorem{corollary}[theorem]{Corollary}

\newtheorem{proposition}[theorem]{Proposition}
\newtheorem{lemma}[theorem]{Lemma}

\theoremstyle{definition}

\newtheorem{remark}[theorem]{Remark}
\newtheorem{example}[theorem]{Example}

\newtheorem{definition}[theorem]{Definition}
\newtheorem{question}[theorem]{Question}

\DeclareMathOperator{\re}{Re\,}

\DeclareMathOperator{\Id}{\mathrm{Id}}

\DeclareMathOperator{\sign}{sign}
\DeclareMathOperator{\aconv}{aconv}

\newcommand{\K}{\mathbb{K}}

\newcommand{\C}{\mathbb{C}}

\newcommand{\N}{\mathbb{N}}

\newcommand{\eps}{\varepsilon}

\newcommand{\call}{\mathcal{L}}

\newcommand{\bbk}{\mathbb{K}}

\newcommand{\na}{\operatorname{NA}}
\newcommand{\nra}{\operatorname{NRA}}

\renewcommand{\leq}{\leqslant}
\renewcommand{\geq}{\geqslant}

\begin{document}

\title{The Bishop-Phelps-Bollob\'as property for the numerical radius: a Zizler-type approach}

\author[Kim]{Sun Kwang Kim}
\address[Kim]{Department of Mathematics, Chungbuk National University, 1 Chungdae-ro, Seowon-Gu, Cheongju, Chungbuk 28644, Republic of Korea}
\email{skk@chungbuk.ac.kr}
\urladdr{\href{http://orcid.org/0000-0002-9402-2002}{ORCID: \texttt{0000-0002-9402-2002} } }

\author[Lee]{Han Ju Lee}
\address[Lee]{Department of Mathematics Education, Dongguk University - Seoul, 04620 (Seoul), Republic of Korea}
\email{hanjulee@dongguk.edu}
\urladdr{
\href{http://orcid.org/0000-0001-9523-2987}{ORCID: \texttt{0000-0001-9523-2987} } }

\author[Mart\'{\i}n]{Miguel Mart\'{\i}n}
\address[Mart\'{\i}n]{Department of Mathematical Analysis and Institute of Mathematics (IMAG), University of Granada, E-18071 Granada, Spain}
\email{mmartins@ugr.es}
\urladdr{\url{https://www.ugr.es/local/mmartins/}}
\urladdr{
\href{http://orcid.org/0000-0003-4502-798X}{ORCID: \texttt{0000-0003-4502-798X} } }

\author[Rold\'an]{\'Oscar Rold\'an}
\address[Rold\'an]{Departamento de An\'{a}lisis Matem\'{a}tico, Universidad de Valencia, Avenida Vicente Andr\'{e}s Estell\'{e}s 19, 46100 Burjasot (Valencia), Spain}
\email{oscar.roldan@uv.es}
\urladdr{\href{http://orcid.org/0000-0002-1966-1330}{ORCID: \texttt{0000-0002-1966-1330} } }

\subjclass[2020]{Primary 46B20; Secondary 46B04, 46B22}

\keywords{Banach space, approximation, numerical radius attaining operators, Bishop-Phelps-Bollob\'{a}s theorem.}

\thanks{ }

\begin{abstract}
We investigate the Bishop–Phelps–Bollob\'as property for the numerical radius (BPBp-nu) through a Zizler-type perspective on the classical Bishop–Phelps–Bollob\'as property (BPBp). This approach allows us to establish two new results: the real Banach space $\ell_\infty$ satisfies the BPBp-nu, while the complex space $\ell_1 \oplus_\infty c_0$ does not. Note that the latter provides the first natural example —constructed without renorming techniques— of a Banach space where the numerical radius attaining operators are dense but the BPBp-nu fails. Along the way, we strengthen the main results of \cite{KLM16} concerning the interplay between the BPBp for the pair $(X,Y)$ and the BPBp-nu for a direct sum $X\oplus Y$ of Banach spaces. We further explore the validity of the Zizler-type BPBp across different pairs of Banach spaces, and how this property relates to the classical BPBp and the BPBp-nu. Finally, we specialize our analysis to the framework of compact operators.
\end{abstract}

\date{\today}

\maketitle

\section{Introduction}\label{section:intro}

\subsection{Preliminaries and notation}
We use standard notation on functional analysis, see for instance \cite{FHHMZ11}. Throughout the whole paper, $X$ and $Y$ are Banach spaces over the field $\mathbb{K}=\mathbb{R}$ or $\mathbb{C}$. Let $X^*$, $B_X$, and $S_X$ respectively denote the topological dual, closed unit ball, and unit sphere of $X$. We denote by $\mathcal{L}(X,Y)$ the space of all bounded and linear operators from $X$ to $Y$, and by $\mathcal{K}(X,Y)$ its subspace consisting of all compact operators from $X$ to $Y$. If $X=Y$, we just write $\call(X)$, and the same applies to other sets of mappings. Recall that an operator $T\in\mathcal{L}(X,Y)$ \textit{attains its norm} if there exists some $x_0\in S_X$ such that $\|T(x_0)\|=\|T\|$. The set of all operators from $X$ into $Y$ that attain their norms is denoted by $\na(X,Y)$. It is worth noting that the Hahn-Banach theorem assures that $\na(X,\K)$ is not empty and, moreover, that $\na(X,Y)$ is also not empty (by just defining convenient rank-one norm attaining operators). 

In 1961, Bishop and Phelps proved the famous \emph{Bishop-Phelps theorem} which asserts that for every Banach space $X$, the set $\na(X,\bbk)$ is always dense in $X^*$ (see for instance \cite[Theorem 7.41]{FHHMZ11}), and they left as an open question whether $\na(X,Y)$ is always dense in $\call(X,Y)$. In 1963, Lindenstrauss answered that question in the negative in his seminal paper \cite{Lindenstrauss63}. He also showed that this density holds in some classes of Banach spaces, such as when $X$ is reflexive or when $Y$ satisfies a geometric property known as property $\beta$, which is satisfied for instance by spaces $Y$ such that $c_0\subset Y\subset \ell_\infty$ (canonical copies) and by finite-dimensional polyhedral spaces. This initiated a new research line to find out for which pairs of Banach spaces $\na(X,Y)$ is dense in $\call(X,Y)$. We refer the reader to the expository papers \cite{Acosta06, ChoiKimLee-survey2024, Martin-RACSAM2016} and the recent papers \cite{Bachir, CJ23, Fovelle,JMR-JFA2023,JMzR-2024} for more information and background. Among interesting results on the topic, Zizler showed that for all $X$ and $Y$, the set of operators whose adjoints are norm attaining is always dense in $\call(X,Y)$ (see \cite{Zizler73}); Bourgain and Huff showed that $X$ has the Radon-Nikod\'ym property (RNP for short) if and only if for every $Z$ isomorphic to $X$ and every $Y$, $\na(Z,Y)$ is dense in $\call(Z,Y)$ (see \cite{Bourgain77,Huff80}); and nevertheless the density holds for most pairs of classical Banach spaces, but not for all of them. For instance, Schachermayer showed that $\na(L_1[0,1], C[0,1])$ is not dense in $\call(L_1[0,1], C[0,1])$ (see \cite{Schachermayer83}). We also remark that, while there are compact operators which cannot be approximated by norm attaining ones (see \cite{martinjfa}), the question of whether finite-rank operators can be always approximated by norm attaining ones remains open, even for the shocking case when the target space is the two-dimensional Hilbert space. 

In 1970, Bollob\'as in \cite{Bollobas70} gave a quantitative refinement of the Bishop-Phelps theorem, showing that a functional and a point at which it almost attains its norm can be approximated simultaneously by a pair for which the norm is attained. This result is now known as the \emph{Bishop-Phelps-Bollob\'as theorem}.

\begin{theorem}[\mbox{Bishop-Phelps-Bollob\'as theorem, see \cite[Corollary 2.4]{C-K-M-M-R} for this version}] For a Banach space $X$ and $0<\eps<2$, if $x_0\in B_X$ and $x_0^*\in B_{X^*}$ satisfy $\re x_0^*(x_0)>1-\frac{\eps^2}{2}$, then there exist $x_1\in S_X$ and $x_1^*\in S_{X^*}$ such that $x_1^*(x_1)=1$, $\|x_1^*-x_0^*\|<\eps$, and $\|x_1-x_0\|<\eps$.
\end{theorem}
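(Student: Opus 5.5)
The plan is to reduce to the real case and then use the Bishop--Phelps cone technique (Brøndsted--Rockafellar style) with carefully chosen parameters. Since $\re x^*$ determines $x^*$ in the complex case and $\|\re x^*\|=\|x^*\|$, I would work with the real-linear functional $\re x_0^*$. If I find $x_1$ and a real functional $f$ with $f(x_1)=1=\|f\|$, then the complexification $x_1^*(x)=f(x)-if(ix)$ satisfies $\|x_1^*-x_0^*\|=\|f-\re x_0^*\|$ and $x_1^*(x_1)=1$. So assume $\K=\mathbb{R}$.

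First dispose of trivial or near-degenerate cases, then normalize. Since $x_0^*(x_0)>1-\eps^2/2$, $\|x_0^*\|$ and $\|x_0\|$ are close to $1$. Rescaling, I would replace $x_0^*$ by $x_0^*/\|x_0^*\|$ and $x_0$ by $x_0/\|x_0\|$, keeping the strict inequality $x_0^*(x_0)>1-\eps^2/2$. The strictness lets me pick $\eta<\eps^2/2$ with $x_0^*(x_0)>1-\eta$, which gives room for the strict conclusions.

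The core step is the Bishop--Phelps lemma. For $k\in(0,1)$, consider the cone
\[
K=\{x: k\|x\|\le x_0^*(x)\}.
\]
Using Zorn's lemma, or a Brøndsted--Rockafellar/Ekeland argument on the closed set $(x_0+K)\cap B_X$, I would find a point $x_1\in B_X$ that is maximal for the order induced by $K$, with $x_1-x_0\in K$. Then $k\|x_1-x_0\|\le x_0^*(x_1-x_0)\le 1-x_0^*(x_0)<\eta$, so $\|x_1-x_0\|<\eta/k$. Maximality means $(x_1+\operatorname{int}K)\cap B_X=\emptyset$. Separating the convex sets $B_X$ and $x_1+\operatorname{int}K$ then gives $x_1^*\in S_{X^*}$ with $x_1^*(x_1)=1=\sup x_1^*(B_X)$ and $x_1^*\ge x_1^*(x_1)$ on $x_1+K$. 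The standard Bishop--Phelps estimate, that $\|f\|=1$ and $f$ nonnegative on $K$ imply $\|f-x_0^*\|\le 2-k$ or a similar bound, is too weak here. Instead I would use Phelps' sharper lemma: if $\|f\|=\|g\|=1$ and $|g|\le \delta$ on $\ker f\cap B_X$, then $\|f-g\|\le 2\delta$, applied with $f=x_0^*$. This gives $\|x_1^*-x_0^*\|\le$ roughly $2(1-k)/(\ldots)$, i.e. of order $1-k$.

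The main obstacle is balancing the two errors so that both fall below $\eps$ exactly at the threshold $\eps^2/2$. These are $\|x_1-x_0\|<\eta/k$ and $\|x_1^*-x_0^*\|\lesssim 1-k$, with the constants in Phelps' lemma mattering. I would try $k=\eps/2$-type choices first. If the crude constants fail, I would instead follow the sharper route of Chica--Kadets--Martín--Moreno--Rambla: apply the Brøndsted--Rockafellar variational principle to the convex function $\phi=\iota_{B_X}-x_0^*$ at the $\eta$-minimizer $x_0$ with parameter $\lambda=\eps$. That yields $x_1$ and $x_1^*\in\partial$ with $\|x_1-x_0\|\le\eta/\eps<\eps/2$ and $\|x_1^*-x_0^*\|$ controlled, followed by a final normalization of $x_1^*$ that must be shown to cost less than the remaining slack. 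Checking this last normalization estimate is where I expect the real difficulty.
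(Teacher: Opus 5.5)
The paper gives no proof of this statement; it only quotes \cite[Corollary 2.4]{C-K-M-M-R}. Your outline names reasonable tools, but it stops exactly where the content of this version lies. The constant $\eps^2/2$ is optimal, so everything hinges on the quantitative balancing, and the estimates you actually write down do not achieve it.

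(i) The preliminary rescaling is not harmless. The conclusion concerns the original $x_0$ and $x_0^*$. Merely keeping $x_0^*(x_0)>1-\eps^2/2$ after rescaling leaves you with errors up to $\eps+1-\|x_0\|$ and $\eps+1-\|x_0^*\|$, and with a sharp constant these cannot simply be absorbed. Rescaling $x_0$ is unnecessary, since both the cone argument and Br{\o}ndsted--Rockafellar work for $x_0\in B_X$. By contrast, $s=1-\|x_0^*\|$ has to be carried through the estimates.

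(ii) In the cone route, positivity of $x_1^*$ on $K=\{x\colon k\|x\|\le x_0^*(x)\}$ gives only $|x_1^*|\le k/(1-k)$ on $\ker x_0^*\cap B_X$. Phelps' kernel lemma then yields $\|x_1^*-x_0^*\|\le 2k/(1-k)$, which is of order $k$, not $1-k$. Balancing this against $\eta/k$ requires $\eta<\eps^2/(2+\eps)$, which is weaker than the claim. The sharp bound comes instead from the bipolar theorem: for $k<\|x_0^*\|$, the functionals that are nonnegative on $K$ are exactly the nonnegative multiples of elements of $x_0^*+kB_{X^*}$.

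(iii) In the Br{\o}ndsted--Rockafellar route, $\lambda=\eps$ is the wrong parameter. With it you have slack on the point side ($\eta/\eps<\eps/2$), but the only bound for the normalized functional is $2\eps$. This is precisely the difficulty you flag without resolving; the slack has to be moved to the functional side.

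The missing computation is short. Work in the real case and put $\delta=1-\re x_0^*(x_0)<\eps^2/2$ and $s=1-\|x_0^*\|\le\delta$. Then $x_0^*$ is an $\eta_0$-subgradient at $x_0$ of the indicator function of $B_X$, with $\eta_0=\|x_0^*\|-\re x_0^*(x_0)=\delta-s$. Br{\o}ndsted--Rockafellar therefore provides $x_1\in B_X$ and $x_1^*$ with $x_1^*(x_1)=\|x_1^*\|$, $\|x_1-x_0\|\le\eta_0/\lambda$ and $\|x_1^*-x_0^*\|\le\lambda$. Since $\bigl|1-\|x_1^*\|\bigr|\le s+\lambda$, the normalized functional $x_1^*/\|x_1^*\|$ lies within $s+2\lambda$ of $x_0^*$.

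So you need $\eta_0/\eps<\lambda<(\eps-s)/2$. This interval is nonempty because $2(\delta-s)<\eps(\eps-s)$ is equivalent to $2\delta<\eps^2+(2-\eps)s$. For $s=0$ it reads $\eta/\eps<\lambda<\eps/2$, so the right parameter is about $\eps/2$, not $\eps$.

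It remains to ensure $x_1^*\neq0$. If $\|x_0^*\|\ge\eps-1$, then $\lambda<(\eps-s)/2\le\|x_0^*\|$, which forces $x_1^*\neq0$ and hence $\|x_1\|=1$. In the remaining case $\|x_0^*\|<\eps-1$, take any $y\in S_X$ with $\|y-x_0\|\le 1<\eps$ (for instance $y=x_0/\|x_0\|$) and any $y^*\in D(X,y)$. This works because $\|y^*-x_0^*\|\le 1+\|x_0^*\|<\eps$.

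Without this bookkeeping, or the equivalent one for the cone with error bounds $\eta_0/k$ and $s+2k$, the proposal does not establish the statement with the constant $\eps^2/2$.
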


In 2008, Acosta, Aron, Garc\'{\i}a, and Maestre introduced the following vector-valued analogue of the Bishop-Phelps-Bollob\'as theorem (see \cite{AAGM08}).

\begin{definition}[{\cite{AAGM08}}]
A pair \((X, Y)\) of Banach spaces is said to have the \textit{Bishop-Phelps-Bollob\'as property for operators} (\textit{BPBp} for short) if, for every \(\varepsilon > 0\), there exists \(\eta(\varepsilon) > 0\) such that whenever \(T \in \mathcal{L}(X,Y)\) with \(\|T\| = 1\) and \(x_0 \in S_X\) satisfy 
$$\|T x_0\| > 1 - \eta(\varepsilon),$$ there exist \(x_1 \in S_X\) and \(S \in \mathcal{L}(X,Y)\) such that
\[
\|S\| = \|S x_1\| = 1, \quad \|x_1 - x_0\| < \varepsilon, \quad \text{and} \quad \|S-T\| < \varepsilon.
\]
\end{definition}

It is immediate that this property is equivalent to its analogue defined by taking $x_0 \in B_X$ and $T \in B_{\mathcal{L}(X,Y)}$, provided that the function $\eta$ is appropriately adjusted. This property is satisfied, for instance, by the following pairs of Banach spaces $(X,Y)$: when $X$ and $Y$ are finite-dimensional (note that the finite dimensionality of $X$ alone is not sufficient) (see \cite{AAGM08}); when $X$ is uniformly convex (see \cite{KL14a}); when $Y$ has the aforementioned property $\beta$ of Lindenstrauss (see \cite{AAGM08}); when $X=L_p(\mu)$ and $Y=L_q(\nu)$ for arbitrary measures $\mu,\nu$ and real numbers $p,q\in [1,\infty)$ (see \cite{CKLM14}); when $X=C(K_1)$ and $Y=C(K_2)$ for arbitrary compact Hausdorff spaces $K_1$ and $K_2$  in the real case (see \cite{ABCCKLLM14}); when $X=C_0(L)$ and $Y=L_q(\nu)$ for arbitrary locally compact Hausdorff space $L$, measure $\nu$ and  $1< q<\infty$ (proved for $C(K)$ in \cite{KL15} and extendable to $C_0(L)$ following the argument in \cite{Acosta16}). In \cite{Acosta16}, it is also shown that the BPBp holds for pairs $(C_0(L), L_1(\nu))$ for arbitrary locally compact Hausdorff space $L$ and measure $\nu$ in the complex case, but it is open whether even $(c_0, \ell_1)$ satisfies the property in the real case.

 It was shown in \cite{AAGM08} that $(\ell_1,Y)$ satisfies the BPBp if and only if $Y$ satisfies a geometric property known as the \textit{Approximate Hyperplane Series Property}. 

\begin{definition}[{\cite{AAGM08}}]\label{AHSp1}
A Banach space $X$ is said to have the \textit{Approximate Hyperplane Series Property} (\textit{AHSp} for short) if, for every \(\varepsilon > 0\),  there is $\eta(\eps)>0$ such that whenever $\{x_k\}_{k\in \mathbb{N}}\subset B_X$, $x^*\in S_Z$, and $a=(a_k)_{k\in \mathbb{N}}\in S_{\ell_1}$ with $a_k\geq 0$ for all $k\in\mathbb{N}$ satisfy
$$
\re x^*\left( \sum_{k=1}^{\infty} a_k x_k \right)>1-\eta(\eps),
$$
there are $A\subset \mathbb{N}$, $\{z_k\}_{k\in A}\subset S_X$ and $z^*\in S_{Z}$ such that
\begin{enumerate}
\item[(i)] $\sum_{k\in A} a_k>1-\eps$,
\item[(ii)] $\|z_k-x_k\|<\eps$ for all $k\in A$,
\item[(iii)] $z^*(z_k)=1$ for all $k\in A$.
\end{enumerate}
\end{definition}

The AHSp and its consequences have been extensively studied in the literature, and it is known to be satisfied by most, if not all, classical Banach spaces. It is worth noting that if $(L_1(\mu), Y)$ has the BPBp, then $Y$ must have the AHSp, and if $Y$ has both the AHSp and the RNP, then $(L_1(\mu), Y)$ has the BPBp (see \cite{CKLM14}). Stability results under sums of spaces, and versions of the BPBp for specific classes of operators (for instance when all involved operators are compact) have also been considered in the literature, see for instance \cite{ACKLM15, CDJM19, DGMM18}. We refer to the surveys \cite{Acosta19,DGMR22} and references therein for further background about this property and related ones.

For $x\in X$, we define the sets 
\begin{align*}
D(X, x)&=\{x^*\in S_{X^*}\colon x^*(x)=\|x\|\}\ \ \text{~and~}\ \ 
\Pi(X)= \{ (x, x^*) \in S_X\times S_{X^*}: x^*\in D(X,x)\}.
\end{align*}

For an operator $T\in \mathcal{L}(X)$, the \emph{numerical radius} $\nu(T)$ of $T$ is given by
$$\nu(T) =\sup \{ |x^*Tx|\colon  (x, x^*) \in \Pi(X)\},$$
and the \emph{numerical index} of the space $X$ is the number
$$n(X)=\inf \{\nu(T)\colon T\in\call(X,X),\, \|T\|=1\}.$$

From its definition, the numerical index $n(X)$ of $X$ satisfies $0 \leq n(X) \leq 1$.  Note that the $n(X) = 1$ means the numerical radius $\nu$ coincides with the norm, and this holds, for example, for $L_1(\mu)$ spaces and their isometric preduals—hence for $C_0(L)$ spaces—among many others. For background on the numerical radius and numerical index, we refer, for instance, to the survey paper \cite{KaMaPa}, to \cite[Section 1.1]{KLMM20}, and to the book \cite{spearbook} and the references therein.  

An operator $T\in \mathcal{L}(X)$ is said to \textit{attain its numerical radius} if there exists $(x_0, x_0^*)\in \Pi(X)$ such that 
$$\nu(T) = |x_0^*Tx_0|.$$

We denote by $\nra(X)$ the set of all numerical radius attaining operators on $X$. Sims asked in his PhD dissertation when $\nra(X)$ is dense in $\call(X)$. This question was then studied by several authors, and it was systematically addressed in several papers from the 1980s and 1990s, see for instance \cite{AcoPaya89, AcoPaya93}. We note that there exists compact operators which cannot be approximated by numerical radius attaining ones (see \cite{CMM17}), but $\nra(X)$ is dense whenever $X$ has the RNP (see \cite{AcoPaya93}). In 2013, Guirao and Kozhushkina \cite{GK13} introduced a Bishop-Phelps-Bollob\'as version of the density of numerical radius attaining operators. We recall the following two versions, one from the already cited \cite{GK13} and the second from \cite{KLM14}.

\begin{definition}[\textrm{\cite{GK13,KLM14}}]~
\begin{itemize}
\item A Banach space $X$ is said to have the \textit{Bishop-Phelps-Bollob\'as property for numerical radius} (\textit{BPBp-nu} for short) if, for every $0<\eps<1$, there exists $\eta(\eps)>0$ such that
whenever $T\in \mathcal{L}(X)$ and $(x_0, x_0^*)\in \Pi(X)$ satisfy  
$$\nu(T)=1 \text{~and~} |x_0^*Tx_0|>1-\eta(\eps),$$
there exist $S\in \mathcal{L}(X)$ and $(x_1, x_1^*)\in \Pi(X)$ such that
\[
\nu(S) = |x_1^*Sx_1|=1, \ \ \|S-T\|<\eps, \ \ \|x_1-x_0\|<\eps,\ \ \text{and} \ \ \|x_1^*- x_0^*\|<\eps.
\]

\item A Banach space $X$ is said to have the \textit{weak Bishop-Phelps-Bollob\'as property for numerical radius} (\textit{weak BPBp-nu} for short) if, for every $0<\eps<1$, there exists $\eta(\eps)>0$ such that
whenever $T\in \mathcal{L}(X)$ and $(x_0, x_0^*)\in \Pi(X)$ satisfy 
 $$\nu(T)=1 \text{~and~} |x_0^*Tx_0|>1-\eta(\eps),$$
there exist $S\in \mathcal{L}(X)$ and $(x_1, x_1^*)\in \Pi(X)$ such that
\[
\nu(S) = |x_1^*Sx_1|, \ \ \|S-T\|<\eps, \ \ \|x_1-x_0\|<\eps,\ \ \text{and} \ \ \|x_1^*- x_0^*\|<\eps.
\]
\end{itemize}
\end{definition}

The difference between the weak BPBp-nu and the BPBp-nu lies in whether or not the operator $S$ is required to have numerical radius 1. Due to the difficulties in approximating elements in $X$ and $X^*$ simultaneously, little is known about the BPBp-nu compared to the BPBp. It is shown that all finite-dimensional spaces, $c_0$, and all $L_p(\mu)$ spaces with $1 \leq p < \infty$ for any measure $\mu$ have the BPBp-nu (see \cite{GK13, KLM14, KLMM20}). For a compact Hausdorff space $K$, it is an open question whether $C(K)$ has the BPBp-nu (see \cite[Question 9]{DGMR22} and \cite[Section 4.3-(a)]{AGR14}). It  is only known positively for a few special spaces such as a metric space $K$ (see \cite{AGR14}). In fact, it was open whether even the real space $\ell_\infty = C(\beta \mathbb{N})$ satisfies the BPBp-nu where $\beta \mathbb{N}$ is the Stone-\v{C}ech compactification of the natural numbers, and we prove that it is true in Corollary \ref{cor:real-ell-infty-BPBp-nu}. We remark that, if one considers the analogous property to the BPBp-nu restricted to compact operators, then all $C(K)$ spaces are known to satisfy this version in both the real and complex cases (see \cite{GMMR21}). On the other hand, it is worth noting that every infinite-dimensional separable Banach space can be equivalently renormed to fail the BPBp-nu (see \cite[Theorem~17]{KLM14}), even though $\nra(X)$ is dense in those spaces with the RNP. Further examples of Banach spaces failing the BPBp-nu can be derived from the main results in \cite{KLM16}, which we will improve in Section \ref{section:bpbzp-and-bpbpnu} of this paper. For more background on this topic, we refer the reader to \cite[Section~2.7]{DGMR22} and the references therein. 

This paper aims to explore the BPBp-nu by applying the following Zizler-type perspective to the BPBp. 

\begin{definition}
A pair $(X, Y)$ of Banach spaces is said to have the \textit{Bishop-Phelps-Bollob\'as-Zizler property} (\textit{BPBZp} for short) if, for every $\varepsilon > 0$, there exists $\eta(\varepsilon) > 0$ such that whenever $T \in \mathcal{L}(X,Y)$ with $\|T\| = 1$, $y_0^* \in S_{Y^*}$, and $x_0 \in S_X$ satisfy 
\[
|y_0^*(T x_0)| > 1 - \eta(\varepsilon),
\]
there exist $y_1^* \in S_{Y^*}$, $x_1 \in S_X$, and $S \in \mathcal{L}(X,Y)$ such that 
\[
\|S\| = |y_1^*(S x_1)| = 1, \quad \|x_1 - x_0\| < \varepsilon, \quad \|y_1^* - y_0^*\| < \varepsilon, \quad \text{and} \quad \|S-T\| < \varepsilon.
\]
\end{definition}

Note that the BPBZp clearly implies the BPBp by the Hahn-Banach theorem. The BPBZp was first introduced, albeit without a name, in \cite{CKLM15} as a tool to obtain a BPB-type version of Zizler's result on the density of operators whose adjoints attain their norms. It was shown in \cite[Proposition 3.5]{CKLM15} that $(X, c_0)$ satisfies the BPBZp if and only if $X$ has the AHSp for the pair $(X, X^*)$. 

\begin{definition}(\cite{ABGM13})\label{AHSppair}
A Banach space $X$ is said to have the \textit{AHSp for the pair}  $(X,X^*)$ if, for every $\eps>0$, there is $\eta(\eps)>0$ such that whenever $\{x_k^*\}_{k\in \mathbb{N}}\subset B_{X^*}$, $x\in S_X$, and $a=(a_k)_{k\in \mathbb{N}}\in S_{\ell_1}$ with $a_k\geq 0$ for all $k\in\mathbb{N}$ satisfy
$$
\re \left( \sum_{k=1}^{\infty} a_k x_k^* \right)(x)>1-\eta(\eps),
$$
there are $A\subset \mathbb{N}$, $\{z_k^*\}_{k\in A}\subset S_{X^*}$ and $z\in S_{X}$ such that
\begin{enumerate}
\item[(i)] $\sum_{k\in A} a_k>1-\eps$,
\item[(ii)] $\|z_k^*-x_k^*\|<\eps$ for all $k\in A$,
\item[(iii)] $z_k^*(z)=1$ for all $k\in A$,
\item[(iv)] $\|z-x\|<\eps$.
\end{enumerate}
\end{definition}

The AHSp for the pair  $(X,X^*)$ was introduced in \cite{ABGM13} in order to characterize $X$ such that $(\ell_1 \times X, \mathbb{K})$ has the BPBp for bilinear forms. We omit the definition of the BPBp for bilinear forms, but we just remark that, whenever $Y$ is reflexive, $(X\times Y^*, \bbk)$ has the BPBp for bilinear forms if and only if $(X,Y)$ has the BPBZp by their definitions. 

\subsection{Outline of the paper}

The rest of the paper is organized as follows. In Section \ref{section:bpbzp-and-bpbpnu}, we investigate the relationship between the (weak) BPBp-nu for $X \oplus_1 Y$ or $X \oplus_\infty Y$ and the BPBZp for $(X, Y)$. In Theorems \ref{thm1} and \ref{thm2}, we present substantial improvements to the main results in \cite{KLM16}, from which it follows, in particular, that every pair of the form $(L_1(\mu), L_1(\nu))$ and many pairs of the form $(C(K_1), C(K_2))$ in the real case satisfy the BPBZp.

In Section \ref{section-BPBZp-pair-classical}, we study the BPBZp for pairs of Banach spaces so that at least one of them is a classical Banach space. We introduce a new AHSp-type property that generalizes the corresponding property for a pair of Definition \ref{AHSppair}, and we use it to characterize when $(\ell_1, Y)$ satisfies the BPBZp (see Theorem \ref{l1charac}). As a consequence, we conclude that the complex space $X = \ell_1 \oplus_\infty c_0$ does not satisfy the BPBp-nu (in fact, it also fails the BPBp-nu for compact operators), even though $\nra(X)$ is dense in $\call(X)$ (see Proposition \ref{prop:nra-dense-complex-l1c0}). To the authors' knowledge, this is the first natural example of such a space not involving renormings. We also provide necessary and sufficient conditions for the pairs $(L_1(\mu), Y)$ and $(X, \ell_\infty)$ to satisfy the BPBZp, along with further results involving $C_0(L)$ and $L_p(\mu)$ spaces.

In Section \ref{section:sufficient-condition-BPBpnu}, we introduce a new condition, named property (nu), which allows us to relate the BPBZp for the pair $(X, X)$ with the BPBp-nu for $X$ whenever $n(X) = 1$. We show, for instance, that all $L_1(\mu)$ spaces, all Hilbert spaces, $c_0$, and all real $C_0(L)$ spaces satisfy this property. As a consequence, we conclude that the real space $\ell_\infty$ satisfies the BPBp-nu (see Corollary \ref{cor:real-ell-infty-BPBp-nu}), a result not covered by previous works. In \cite{AGR14}, the authors asked whether a compact Hausdorff space $K$ admits local compensation if $C(K)$ has the BPBp-nu. Note that the answer to this question would be negative if $\beta \mathbb{N}$ does not admit local compensation (note however that this is currently unknown). 

In Section \ref{section:cpt}, we study the properties considered in the previous sections for the setting of compact operators. In \cite{martinjfa} and \cite{CMM17}, it was respectively shown that there exist compact operators which cannot be approximated by norm-attaining or by numerical radius attaining operators, answering long-standing open problems in the field. A key element in proving these two facts was the use of spaces without the approximation property. However, in the case of the BPBZp and the BPBp-nu for compact operators, examples of spaces without these properties can be constructed even without using a space that lacks the approximation property.

Regarding the various BPBp-type properties discussed throughout this paper, it is sufficient to consider small values of $\varepsilon$. Consequently, when proving that a given pair or space has a BPBp-type property, we shall assume without loss of generality that $0 < \varepsilon < 1$.

\section{BPBZp and BPBp-nu}\label{section:bpbzp-and-bpbpnu}
We begin by providing examples of pairs of Banach spaces having the BPBZp. Our first result demonstrates that this property holds for any pair of finite-dimensional spaces.

\begin{proposition}
If $X$ and $Y$ are  finite dimensional Banach spaces, then the pair $(X,Y)$ has the BPBZp.
\end{proposition}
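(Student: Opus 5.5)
We argue by contradiction, using compactness, in the same way as the standard proof that pairs of finite-dimensional spaces have the BPBp \cite{AAGM08}. Suppose $(X,Y)$ fails the BPBZp. Then there is $\eps_0\in(0,1)$ such that for every $n\in\N$ we can find $T_n\in\call(X,Y)$ with $\|T_n\|=1$, $x_n\in S_X$ and $y_n^*\in S_{Y^*}$ satisfying
$$|y_n^*(T_nx_n)|>1-\tfrac1n,$$
while no triple $(S,x_1,y_1^*)$ with $\|S\|=|y_1^*(Sx_1)|=1$ lies within $\eps_0$ of $(T_n,x_n,y_n^*)$ in the three respective norms.

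Since $X$ and $Y$ are finite-dimensional, the unit spheres $S_{\call(X,Y)}$, $S_X$ and $S_{Y^*}$ are norm compact. Passing to a subsequence, we may therefore assume that $T_n\to T$, $x_n\to x$ and $y_n^*\to y^*$ in norm. Then $\|T\|=1$, $x\in S_X$ and $y^*\in S_{Y^*}$. Moreover, the map $(T,x,y^*)\mapsto y^*(Tx)$ is jointly continuous, with
$$|y_n^*(T_nx_n)-y^*(Tx)|\leq \|y_n^*-y^*\|+\|T_n-T\|+\|x_n-x\|.$$
Hence $|y^*(Tx)|=\lim_n|y_n^*(T_nx_n)|\geq 1$. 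Since also $|y^*(Tx)|\leq \|T\|=1$, the triple $(T,x,y^*)$ satisfies $\|T\|=|y^*(Tx)|=1$.

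Now take $n$ large enough that $\|T_n-T\|<\eps_0$, $\|x_n-x\|<\eps_0$ and $\|y_n^*-y^*\|<\eps_0$. Then $S=T$, $x_1=x$, $y_1^*=y^*$ is an admissible triple within $\eps_0$ of $(T_n,x_n,y_n^*)$, which contradicts the choice of the sequence. This proves the BPBZp. The function $\eta$ produced this way is not explicit, but the definition does not require it to be.

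The only points to check are the compactness of the three spheres, which is where finite dimensionality of both $X$ and $Y$ is used (it makes $\call(X,Y)$ and $Y^*$ finite-dimensional), and the fact that the limit triple is admissible without any modification. No step is a genuine obstacle.
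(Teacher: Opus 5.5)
Your proof is correct and is essentially the same as the paper's. Both negate the property to obtain $|y_n^*(T_nx_n)|>1-\frac1n$, extract norm-convergent subsequences by compactness of the unit spheres, and note that the limit triple $(T,x,y^*)$ satisfies $\|T\|=|y^*(Tx)|=1$ and is eventually within $\varepsilon_0$ of $(T_n,x_n,y_n^*)$, which is a contradiction.
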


\begin{proof} Otherwise, there exists $\eps_0>0$ which does not satisfy the condition in the definition of BPBZp. Thus, for each $n\in\mathbb{N}$ there exist $T_n\in S_{\mathcal{L}(X,Y)}$ and $(u_n,v^*_n)\in S_X\times S_{Y^*}$ with $|v_n^*(T_nu_n)|>1-\frac{1}{n}$ such that if $S\in S_{\mathcal{L}(X,Y)}$ and $(x_1,y^*_1)\in S_X\times S_{Y^*}$ satisfy $|y_1^*(Sx_1)|=1$, then $\max\bigl\{\|x_1 - u_n\|, \|y_1^*-v_n^*\|, \|S-T_n\|\bigr\}\geq \eps_0$. From compactness, by passing to subsequences, we can assume that $T_n$, $z_n$ and $z^*_n$ converge to $R$, $u$, and $v^*$ respectively. Since $R\in S_{\mathcal{L}(X,Y)}$, $(u,v^*)\in S_X\times S_{Y^*}$ and $|v^*(Ru)|=1$, we get a contradiction.
\end{proof}

We next show that the uniform smoothness of the range allows to pass from the BPBp to the BPBZp.

\begin{proposition}\label{uniformlysmoothBPBp}
Suppose that $X$ is a Banach space and $Y$ is a uniformly smooth Banach space. If the pair $(X,Y)$ has the BPBp, then it has the BPBZp.
\end{proposition}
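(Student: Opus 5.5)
Given $T\in S_{\mathcal{L}(X,Y)}$, $x_0\in S_X$ and $y_0^*\in S_{Y^*}$ with $|y_0^*(Tx_0)|>1-\eta$, we first rotate $y_0^*$ by a unimodular scalar so that $\re y_0^*(Tx_0)>1-\eta$. Rotating back at the end costs nothing, since $|y_1^*(Sx_1)|$ is invariant under rotations of $y_1^*$. In particular $\|Tx_0\|>1-\eta$, so the BPBp for $(X,Y)$ applies, with $\eta$ chosen not larger than $\eta_{BPBp}(\delta)$ for a small auxiliary $\delta$. This gives $S\in S_{\mathcal{L}(X,Y)}$ and $x_1\in S_X$ with $\|Sx_1\|=1$, $\|x_1-x_0\|<\delta$ and $\|S-T\|<\delta$. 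The choice is forced: take $y_1^*\in S_{Y^*}$ to be the norming functional of $Sx_1$, which is unique because $Y$ is smooth. Then $y_1^*(Sx_1)=1=\|S\|$. It remains to show that $\|y_1^*-y_0^*\|$ is small.

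For this we use uniform smoothness of $Y$ in its dual form: $Y^*$ is uniformly convex, with modulus $\delta_{Y^*}$. Compute
\[
\re y_0^*(Sx_1)\geq \re y_0^*(Tx_0)-\|S-T\|-\|x_1-x_0\| > 1-\eta-2\delta .
\]
Hence
\[
\Bigl\|\tfrac{y_0^*+y_1^*}{2}\Bigr\|\geq \re\tfrac{(y_0^*+y_1^*)(Sx_1)}{2}>1-\tfrac{\eta+2\delta}{2},
\]
because $y_1^*(Sx_1)=1$ and $Sx_1\in S_Y$. By uniform convexity of $Y^*$, if $\eta+2\delta<2\delta_{Y^*}(\eps)$, then $\|y_0^*-y_1^*\|<\eps$.

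Parameters are chosen as follows. Fix $\delta=\min\{\eps,\delta_{Y^*}(\eps)/2\}$, which also gives $\|x_1-x_0\|<\eps$ and $\|S-T\|<\eps$. Then take $\eta=\min\{\eta_{BPBp}(\delta),\delta_{Y^*}(\eps)/2\}$, so that $\eta+2\delta$ is small enough; if needed, shrink $\delta$ to $\delta_{Y^*}(\eps)/3$ to get strict inequalities. Throughout, the BPBp is used in its stated normalized form with $\|T\|=1$ and $x_0\in S_X$, exactly as in our setting.

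The main obstacle is ensuring the new functional stays close to $y_0^*$. The BPBp alone moves the point $Sx_1$ but gives no control on functionals. What resolves it is the passage from a ``nearly norming'' estimate to a norm estimate, which is exactly the uniform convexity of $Y^*$ (the dual form of uniform smoothness of $Y$). The rest is bookkeeping of the constants.
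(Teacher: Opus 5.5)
Your proof is correct and follows essentially the same argument as the paper. Both apply the BPBp, take a norming functional of $Sx_1$, and use the uniform convexity of $Y^*$ on the midpoint with $y_0^*$ after a unimodular rotation, with the same choice of constants. The differences are cosmetic: you evaluate the midpoint at $Sx_1$ rather than at $Tx_0$, and the uniqueness of the norming functional you invoke is never actually used.
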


\begin{proof}
Since $Y$ is uniformly smooth, its dual $Y^*$ is uniformly convex. Let $\delta_{Y^*}$ denote the modulus of convexity of $Y^*$. For the duality between uniform smoothness and uniform convexity, as well as the definition and basic properties of the modulus of convexity, we refer the reader to \cite[Chapter 9]{FHHMZ11}.

We shall show that if $(X,Y)$ has the BPBp with a function $\eta$, then it has the BPBZp with the function 
$$\gamma\colon \eps \mapsto \min\left\{\frac{\delta_{Y^*}(\eps)}{2}, \eta\left( \min\left\{\frac{\delta_{Y^*}(\eps)}{2},\eps\right\}\right)\right\}$$ for $0<\eps<1$.

Define $\eps' = \min\left\{\frac{\delta_{Y^*}(\eps)}{2},\eps\right\}$, and assume $T\in \mathcal{L}(X, Y)$ with $\|T\|=1$ and $(x_0,y^*_0)\in S_X\times S_{Y^*}$ satisfy 
$$|y_0^*(Tx_0)|>1-\gamma(\eps).$$
Since $\|Tx_0\|>1-\eta(\eps')$, there exist $x_1\in S_X$ and $S\in \mathcal{L}(X,Y)$ satisfying 
$\|Sx_1\|=1=\|S\|$ and $\max\bigl\{\|x_1 - x_0\|, \|S-T\|\bigr\} <\eps'$. For $z_1^*\in S_{Y^*}$ such that $z_1^*(Sx_1)=1$ , we have that 
$$\re z_1^*(Tx_0)\geq \re z_1^*(Sx_1)-\|S-T\|-\|x_1-x_0\|>1-2\eps'.$$
Hence, for $\theta\in \mathbb{K}$ with modulus $1$ such that $|y_0^*(Tx_0)|=\theta\,y_0^*(Tx_0)$, we see that 
$$\left\|\frac{z_1^*+\theta y_0^*}{2}\right\|\geq\re\left(\frac{z_1^*+\theta y_0^*}{2}\right)(Tx_0)>1-\frac{1}{2}\gamma(\eps)-\eps'\geq1-\delta_{Y^*}(\eps).$$
Therefore, we have $\left\|y_0^*-\overline{\theta}z_1^*\right\|<\eps$, and so $y_1^*=\overline{\theta} z_1^*$ is the desired functional.
\end{proof}

Recall that if $X$ is uniformly convex, then $(X, Y)$ has the BPBp for every Banach space $Y$ (see \cite{KL14a}). It was further shown   that all pairs of the form $(L_1(\mu), L_q(\nu))$ have the BPBp whenever $1 < q < \infty$ for arbitrary measures $\mu$ and $\nu$ (see \cite{CKLM14}). Later, it was proved in \cite{Acosta16}  that if $L$ is a locally compact Hausdorff space and $Y$ is $\mathbb{C}$-uniformly convex, then the pair $(C_0(L), Y)$ of complex Banach spaces has the BPBp. The proof also applies to the real case when $Y$ is uniformly convex, leading to the following result.

\begin{corollary}\label{uniformlysmoothBPBpcor}
For the following Banach spaces $X$ and $Y$, the pair $(X,Y)$ has the BPBZp.
\begin{enumerate}
\item $X=L_p(\mu)$ and $Y=L_q(\nu)$ for $1\leq p \leq \infty$, $1<q<\infty$, and arbitrary measures $\mu$ and $\nu$.
\item $X=C_0(L)$ and $Y=L_q(\nu)$ for $1<q<\infty$, arbitrary measure $\nu$, and arbitrary locally compact Hausdorff space $L$.
\end{enumerate}
\end{corollary}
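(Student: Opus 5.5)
The plan is to deduce both items directly from Proposition~\ref{uniformlysmoothBPBp}. It therefore suffices to check two things in each case: that the range space $Y$ is uniformly smooth, and that the pair $(X,Y)$ has the BPBp, citing the known results recalled just before the statement.

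\emph{Uniform smoothness of the range.} For $1<q<\infty$ and an arbitrary measure $\nu$, the space $L_q(\nu)$ is uniformly convex by Clarkson's inequalities (Hanner's in the range $1<q<2$). Its dual $L_{q'}(\nu)$, with $1/q+1/q'=1$, is also uniformly convex; for $\sigma$-finite $\nu$ this is standard. For an arbitrary measure one can either invoke the general fact that $L_q(\nu)$ is uniformly smooth with the same modulus as in the $\sigma$-finite case, since the moduli are computed from finitely many functions that live on a $\sigma$-finite piece, or argue directly that the uniform convexity of $L_q$ with $1<q'<\infty$ dualizes. By the duality between uniform convexity of $Y^*$ and uniform smoothness of $Y$ (see \cite[Chapter 9]{FHHMZ11}), $Y=L_q(\nu)$ is uniformly smooth in all cases. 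In both items this range space is real or complex, as the case may be.

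\emph{BPBp for the pairs.} For item (1) I split according to $p$.
\begin{itemize}
\item If $1<p<\infty$, then $L_p(\mu)$ is uniformly convex, so $(L_p(\mu),Y)$ has the BPBp for every $Y$ by \cite{KL14a}.
\item If $p=1$, the pair $(L_1(\mu),L_q(\nu))$ has the BPBp by \cite{CKLM14}.
\item If $p=\infty$, I would write $L_\infty(\mu)$ as $C(K)$ for a suitable compact Hausdorff (hyperstonean, or at least Stonean) space $K$. Then $(L_\infty(\mu),L_q(\nu))$ is a pair of the form $(C_0(L),Y)$ with $Y$ uniformly convex. This case is covered by \cite{Acosta16} in the complex case, where $L_q(\nu)$ is $\mathbb{C}$-uniformly convex since it is uniformly convex. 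In the real case I would use the observation recorded before the statement that Acosta's proof carries over when $Y$ is uniformly convex; alternatively, for $C(K)$ with real scalars, one can also quote \cite{KL15}.
\end{itemize}
Item (2) is exactly the $C_0(L)$ result just discussed, with $Y=L_q(\nu)$ uniformly convex.

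Having both ingredients, Proposition~\ref{uniformlysmoothBPBp} yields the BPBZp in each case. The only step requiring care is the identification $L_\infty(\mu)\cong C(K)$ isometrically as Banach spaces (and, when $\mu$ is not localizable, making sense of $L_\infty(\mu)$). I would handle this through Gelfand theory: $L_\infty(\mu)$ is a commutative unital C$^*$-algebra in the complex case, hence of the form $C(K)$, and its real part is the corresponding real $C(K)$. Since the BPBp is an isometric invariant, this reduces the case $p=\infty$ to item (2).
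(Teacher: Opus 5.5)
Your proposal is correct and follows the paper's own argument, which the paper leaves implicit. It combines Proposition~\ref{uniformlysmoothBPBp} with the uniform smoothness of $L_q(\nu)$ and the cited BPBp results: \cite{KL14a} for $1<p<\infty$, \cite{CKLM14} for $p=1$, and \cite{Acosta16} (with its real-case extension) for $C_0(L)$, handling $p=\infty$ through the identification $L_\infty(\mu)\cong C(K)$. Your hedging about non-$\sigma$-finite measures is harmless but unnecessary, since $L_q(\nu)^*=L_{q'}(\nu)$ holds for every measure when $1<q<\infty$.
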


Our next goal is to investigate the BPBZp for the pair $(L_1(\mu), L_1(\nu))$. To this end, we require the following theorem, which strengthens \cite[Theorem 2.1]{KLM16}.

\begin{theorem}\label{thm1}
Let $X$ and $Y$ be Banach spaces and suppose that $n(X)=1$. If $X\oplus_1 Y$ has the weak BPBp-nu with a function $\eta$, then $(X, Y)$ has the BPBZp with the function $\gamma\colon \eps\mapsto\eta\left(\frac{\eps}{4+\eps}\right)$ for $0<\eps<1$. 
\end{theorem}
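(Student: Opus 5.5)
The plan is to embed the pair $(X,Y)$ into $Z=X\oplus_1 Y$, whose dual is $Z^*=X^*\oplus_\infty Y^*$. Given $T\in\call(X,Y)$ with $\|T\|=1$, $x_0\in S_X$ and $y_0^*\in S_{Y^*}$ with $|y_0^*(Tx_0)|>1-\gamma(\eps)$, I will consider the operator $A\in\call(Z)$ given by
$$A(x,y)=(0,Tx).$$

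First step: show $\nu(A)=1$. For $(z,z^*)\in\Pi(Z)$ with $z=(x,y)$ and $z^*=(x^*,y^*)$ we have $|z^*Az|=|y^*(Tx)|\le\|x\|\le 1$. Conversely, testing with $z=(x,0)$ and $z^*=(x^*,y^*)$, where $x^*\in D(X,x)$ and $y^*\in B_{Y^*}$ is arbitrary, gives $\nu(A)\ge\|T\|=1$.

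Second step: build the almost-attaining pair. Choose $x_0^*\in D(X,x_0)$ and a scalar $\theta$ of modulus one with $\theta y_0^*(Tx_0)=|y_0^*(Tx_0)|$. Put $z_0=(x_0,0)$ and $z_0^*=(x_0^*,\theta y_0^*)$. Then $(z_0,z_0^*)\in\Pi(Z)$ and $|z_0^*Az_0|>1-\eta(\eps')$ with $\eps'=\eps/(4+\eps)$. The weak BPBp-nu then yields $S\in\call(Z)$ and $(z_1,z_1^*)\in\Pi(Z)$, with $z_1=(x_1,y_1)$ and $z_1^*=(x_1^*,y_1^*)$, such that
$$\nu(S)=|z_1^*Sz_1|,\quad \|S-A\|<\eps',\quad \|z_1-z_0\|<\eps',\quad \|z_1^*-z_0^*\|<\eps'.$$
Write $S$ as a block matrix $(S_{ij})$. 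Then $S_{11}$, $S_{12}$, $S_{22}$ have norm less than $\eps'$, and $\|S_{21}-T\|<\eps'$. Also $\|y_1\|<\eps'$, $\|x_1\|>1-\eps'$, $\|y_1^*-\theta y_0^*\|<\eps'$, and $x_1^*(x_1)=\|x_1\|$, $y_1^*(y_1)=\|y_1\|$.

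Third step: exploit the $\ell_1$-structure. For $(z,z^*)\in\Pi(Z)$ the quantity $z^*Sz$ splits as $\|x\|\,\Phi_1+\|y\|\,\Phi_2$, where
$$\Phi_1=x^*(S_{11}\hat x)+y^*(S_{21}\hat x),\qquad \hat x=x/\|x\|,$$
and $\Phi_2$ is the analogous expression for the second column. Hence $\nu(S)=\max\{M_1,M_2\}$, where $M_1$ is the supremum of $|\Phi_1|$ over $(\hat x,x^*)\in\Pi(X)$ and $y^*\in B_{Y^*}$, and $M_2$ is the corresponding supremum for the second column. Since $\nu(S)$ is attained at $(z_1,z_1^*)$ and the weight $\|x_1\|$ is positive, the first column attains its supremum: $|x_1^*(S_{11}\hat x_1)+y_1^*(S_{21}\hat x_1)|=M_1=\nu(S)$, with $\nu(S)$ close to $1$. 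The natural candidate for the new operator is then
$$R=S_{21}+\varphi(\cdot)\,w,$$
where $\varphi\in X^*$ is built from $S_{11}$ and $x_1^*$, and $w\in B_Y$ is almost normed by $y_1^*$. One then sets $S'=R/\|R\|$, $x_1'=\hat x_1$ and $y_1'^*=\overline{\theta}\,y_1^*/\|y_1^*\|$ (with the phase adjusted so that the value is positive), and checks the distances. Each of $\|S'-T\|$, $\|x_1'-x_0\|$ and $\|y_1'^*-y_0^*\|$ should be at most $4\eps'/(1-\eps')=\eps$, which explains the choice $\eps/(4+\eps)$.

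Main obstacle: absorbing the $S_{11}$-term into an operator $X\to Y$ so that $\|R\|=|y^*(R\hat x_1)|$ holds exactly, and for the correct functional. This is where $n(X)=1$ must be used. Since $\nu(S_{11})=\|S_{11}\|$, I would try to show that
$$\sup_{x\in S_X,\ y^*\in B_{Y^*}} |y^*(S_{21}x)| + \sup_{(x,x^*)\in\Pi(X)} |x^*S_{11}x|$$
is controlled by $M_1$, with equality along the attained direction. It may be simpler to avoid exact attainment at $y_1^*$ by taking $w$ in the bidual, or by replacing $y_1^*$ with a nearby functional. If this direct absorption fails, I would instead use the maximality of $\nu(S)$ over all $y^*\in B_{Y^*}$ at fixed $(\hat x_1,x_1^*)$: this forces $y_1^*$ to norm $S_{21}\hat x_1$ up to a phase aligned with $x_1^*(S_{11}\hat x_1)$. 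That alignment suggests taking $\varphi=x_1^*\circ S_{11}$ and $w=S_{21}\hat x_1/\|S_{21}\hat x_1\|$.
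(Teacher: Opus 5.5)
Your set-up is correct: the operator $A=E_Y\circ T\circ P_X$, the pair $\bigl((x_0,0),(x_0^*,\theta y_0^*)\bigr)\in\Pi(X\oplus_1 Y)$, the splitting $\nu(S)=\max\{M_1,M_2\}$, and the deduction that $M_1=\nu(S)$ is attained at $(\hat x_1,x_1^*,y_1^*)$. Working with $\hat x_1$ is a legitimate substitute for the paper's argument that $y_1=0$, and your candidate $R=S_{21}+\varphi(\cdot)\,w$ is essentially the paper's final operator. The gap is in the step that carries the proof. You must show $\|R\|\le M_1$, so that $R$ attains its norm at $\hat x_1$ against $y_1^*$. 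Phase alignment at $\hat x_1$ only gives the lower bound $|y_1^*(R\hat x_1)|=M_1$. Without an upper bound, $S'=R/\|R\|$ gives $|y_1'^*(S'x_1')|=M_1/\|R\|$, which may be strictly less than $1$. The inequality you propose for the upper bound, $\|S_{21}\|+\nu(S_{11})\le M_1$, is false in general, because the two suprema can be attained at different points. For example, in the real space $\ell_\infty^2\oplus_1\mathbb{R}$ take $S_{11}(a,b)=\delta\tfrac{a-b}{2}(1,1)$, $S_{21}(a,b)=\tfrac{a+b}{2}$ and second column $0$. Then $M_1=\nu(S)=1$, attained at $((1,1),0)$, while $\|S_{21}\|+\nu(S_{11})=1+\delta$.

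What is needed, and what the paper proves, is the \emph{pointwise} bound $\|S_{11}p\|+\|S_{21}p\|\le M_1$ for every $p\in S_X$, i.e.\ $\|S\circ E_X\|=M_1$. This is where $n(X)=1$ enters. It is applied not to $S_{11}$ alone but, for each $p$ with $S_{11}p\neq0$, to the rank-one perturbation $H_p=S_{11}+\frac{S_{11}p}{\|S_{11}p\|}\,q^*\circ S_{21}$, where $q^*\in S_{Y^*}$ norms $S_{21}p$. Then $\|S_{11}p\|+\|S_{21}p\|=\|H_pp\|\le\|H_p\|=\nu(H_p)\le\sup_{(x,x^*)\in\Pi(X)}\bigl(|x^*(S_{11}x)|+\|S_{21}x\|\bigr)=M_1$; the case $S_{11}p=0$ is immediate. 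With this bound, $\|Rx\|\le\|S_{21}x\|+\|S_{11}x\|\le M_1$ on $S_X$, the chain of inequalities at $\hat x_1$ becomes an equality, and your distance estimates go through. There is also a phase slip in your choice $\varphi=x_1^*\circ S_{11}$, $w=S_{21}\hat x_1/\|S_{21}\hat x_1\|$. It gives $R\hat x_1=\bigl(\|S_{21}\hat x_1\|+x_1^*(S_{11}\hat x_1)\bigr)w$, whose norm is below $M_1$ unless $x_1^*(S_{11}\hat x_1)\ge 0$. You need $\varphi=\sigma\,x_1^*\circ S_{11}$, where $\sigma$ is the common unimodular scalar with $\sigma x_1^*(S_{11}\hat x_1)=|x_1^*(S_{11}\hat x_1)|$ and $\sigma y_1^*(S_{21}\hat x_1)=\|S_{21}\hat x_1\|$; this is the paper's $\theta$. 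Finally, do not rotate $y_1'^*$ further. Only $|y_1'^*(S'x_1')|=1$ is required, and an extra phase would destroy $\|y_1'^*-y_0^*\|<\eps$.
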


\begin{proof} Let $P_X$ and $P_Y$ be the canonical projections from $X\oplus_1 Y$ to $X$ and from $X\oplus_1 Y$ to $Y$ respectively, and $E_X$ and $E_Y$ be the canonical injections from $X$ to $X\oplus_1 Y$ and from $Y$ to $X\oplus_1 Y$ respectively.

For $0<\eps<1$, define $\eps' = \frac{\eps}{4+\eps}$ and assume $T\in \mathcal{L}(X, Y)$ with $\|T\|=1$ and $(x_0,y^*_0)\in S_X\times S_{Y^*}$ satisfy 
$$|y_0^*(Tx_0)|>1-\eta(\eps').$$

For $E_Y\circ T \circ P_X \in \mathcal{L}$($X\oplus_1 Y$), we see that
\begin{align*}
\nu(E_Y\circ T \circ P_X) &\leq \|E_Y\circ T \circ P_X\|=\|T\|= \sup \left\{|y^*(Tx)|\colon  x\in S_X,~y^*\in S_{Y^*}\right\} \\ 
& = \sup \left\{ \bigl|(x^*,y^*)(E_Y\circ T \circ P_X(x,0))\bigr| \colon (x,x^*)\in \Pi(X),~y^*\in S_{Y^*}\right\} \\ 
&=\sup\left\{\bigl|(x^*,y^*)(E_Y\circ T \circ P_X(x,0))\bigr|\colon  \bigl((x,0),(x^*,y^*)\bigr)\in\Pi(X\oplus_1 Y)\right\}\\
&\leq \nu(E_Y\circ T \circ P_X).
\end{align*}

Therefore, $\nu(E_Y\circ T \circ P_X)=\|E_Y\circ T \circ P_X\|=\|T\|=1$. Fix $x_0^*\in S_{X^*}$ such that $(x_0,x_0^*)\in \Pi(X)$, then it holds that
$$\bigl((x_0,0),(x_0^*,y_0^*)\bigr)\in \Pi(X\oplus_1 Y) \text{~and~}
\bigl|(x^*_0, y_0^*)(E_Y\circ T \circ P_X(x_0, 0))\bigr| = |y_0^*(T x_0)|>1-\eta(\eps').
$$

Hence, there exist $\bigl((x_1, y_1),(x_1^*, y_1^*)\bigr)\in \Pi(X\oplus_1 Y)$ and $ R\in \mathcal{L}(X\oplus_1 Y)$ with
$\nu(R)=\bigl|(x_1^*, y^*_1)R(x_1, y_1)\bigr|$, $\| R - E_Y\circ T \circ P_X\| <\eps'$, $\|x_1-x_0\|+\|y_1\|<\eps'$ and $\max\bigl\{ \|x_1^* - x_0^*\|, \|y_1^* - y_0^*\|\bigr\} <\eps'$.

Since $|\nu(R)-\nu(E_Y\circ T \circ P_X)|\leq \|R-E_Y\circ T \circ P_X\|<\eps'<1/5$ and $\nu(E_Y\circ T \circ P_X)=1=\|E_Y\circ T \circ P_X\|$, the operator $Q = {\nu(R)^{-1}}R\in\mathcal{L}(X\oplus_1 Y)$ is well defined, and it is clear that $\nu(Q) =1= \bigl|(x_1^*, y^*_1)Q(x_1, y_1)\bigr|$. Moreover, we have
\begin{align*}
\| Q - E_Y\circ T \circ P_X\|&\leq \| Q - R\|+\|R-E_Y\circ T \circ P_X\|\\
&< \|R\|\nu(R)^{-1}|1-\nu(R)|+ \eps' <(1+\eps')(1-\eps')^{-1}\eps'+\eps'=\frac{\eps}{2}.
\end{align*}

At this moment, we see that $y_1=0$ which implies $x_1^*(x_1)=\|x_1^*\|=\|x_1\|=1$. This is followed by the easy observation
$\bigl|(x^*, y^*) U(x,0)\bigr|\leq \nu(U)\|x\|$ for any $\bigl((x, y),(x^*, y^*)\bigr)\in \Pi(X\oplus_1 Y)$ and $U\in L(X\oplus_1 Y)$.
Indeed, if $\|y_1\|\neq 0$, then
\begin{align*}
1&= \bigl|(x_1^*, y^*_1) Q(x_1, y_1)\bigr|\leq \bigl|(x_1^*, y^*_1) Q(x_1,0)\bigr|+ \left|(x_1^*, y^*_1) Q\left(0, \frac{y_1}{\|y_1\|}\right)\right|\|y_1\|\\
&\leq \nu(Q)\|x_1\|+ \left|(x_1^*, y^*_1) (Q-E_Y\circ T \circ P_X)\left(0, \frac{y_1}{\|y_1\|}\right)\right|\|y_1\|\\
&\leq \|x_1\|+\frac{\eps}{2}\|y_1\|<\|x_1\|+\|y_1\|=1
\end{align*}
which is the contradiction. This leads us to have $1= \nu(Q) = |(x_1^*, y_1^*)(Q(x_1, 0))|$.

We now see that $\|Q\circ E_X\|=\sup\{ \|P_X\circ Q\circ E_Xx\| +\|P_Y \circ Q\circ E_X x\| \colon x\in B_X\}=1$. Since we have that 
\begin{align*}
1= \nu(Q) &= |(x_1^*, y_1^*)Q(x_1, 0)|= \bigl|x_1^*(P_X\circ Q\circ E_Xx_1) + y^*_1(P_Y\circ Q\circ E_Xx_1)\bigr| \\
&\leq \|P_X\circ Q\circ E_Xx_1\| + \|P_Y\circ Q\circ E_Xx_1\| \leq \|Q\circ E_X\|,
\end{align*}
it is enough to show that $\|P_X\circ Q\circ E_Xp\| +\|P_Y\circ Q\circ E_Xp\|\leq 1$ for any fixed $p\in S_X$. If $P_X\circ Q\circ E_Xp=0$, we have
\begin{align*}
\|P_Y \circ Q\circ E_Xp\|&=\sup\left\{\bigl|(p^*,y^*)(Q(p,0))\bigr|\colon  (p,p^*)\in \Pi(X),~ y^*\in S_{Y^*}\right\}\\
&\leq \sup\left\{\bigl|(x^*,y^*)(Q(x,0))\bigr|\colon  (x,x^*)\in\Pi(X),~ y^*\in S_{Y^*}\right\}\\
&=\sup\left\{\bigl|(x^*,y^*)(Q(x,0))\bigr|\colon \bigl((x,0),(x^*,y^*)\bigr)\in\Pi(X\oplus_1 Y)\right\}\leq \nu(Q)=1.
\end{align*}
If $P_X\circ Q\circ E_Xp\neq 0$, take $q^*\in S_{Y^*}$ such that $q^*(P_Y \circ Q\circ E_Xp)=\|P_Y \circ Q\circ E_Xp\|$ and define $H_p\in \mathcal{L}(X)$ by 
$$H_px=P_X\circ Q\circ E_Xx+\frac{P_X\circ Q\circ E_Xp}{\|P_X\circ Q\circ E_Xp\|}q^*(P_Y \circ Q\circ E_Xx)\qquad (x\in X).$$
Then, from the assumption $n(X)=1$ we have
\begin{align*}
&\|P_X\circ Q\circ E_Xp\| +\|P_Y \circ Q\circ E_Xp\| =\|H_p p\| \leq \|H_p\|= \nu(H_p)\\
&= \sup \left\{ \bigl|x^*(P_X\circ Q\circ E_Xx) + \frac{x^*(P_X\circ Q\circ E_Xp)}{\|P_X\circ Q\circ E_Xp\|} q^*(P_Y \circ Q\circ E_Xx)\bigr| \colon (x,x^*)\in \Pi(X)\right\} \\ 
&\leq \sup \left\{ |x^*(P_X\circ Q\circ E_Xx)| + |y^*(P_Y \circ Q\circ E_Xx)| \colon (x,x^*)\in \Pi(X),~y^*\in S_{Y^*}\right\} \\ 
& = \sup \left\{ \bigl|(x^*,y^*)(Q(x,0))\bigr| \colon (x,x^*)\in \Pi(X),~y^*\in S_{Y^*}\right\} \\ 
&=\sup\left\{\bigl|(x^*,y^*)(Q(x,0))\bigr|\colon \bigl((x,0),(x^*,y^*)\bigr)\in\Pi(X\oplus_1 Y)\right\} \leq \nu(Q)=1.
\end{align*}

Therefore, we have that
\begin{align*}
1= \|Q\circ E_X\| &\geq \|P_X\circ Q\circ E_Xx_1\| + \|P_Y\circ Q\circ E_Xx_1\| \\
&\geq \bigl|x_1^*(P_X\circ Q\circ E_Xx_1) + y^*_1(P_Y\circ Q\circ E_Xx_1)\bigr|=|(x_1^*, y_1^*)Q(x_1, 0)|=1.
\end{align*}
From this, we see that $P_Y\circ Q\circ E_Xx_1\neq 0$ since $\|P_X\circ Q\circ E_X\|=\|P_X\circ (Q-E_Y\circ T \circ P_X)\circ E_X\|<\frac{\eps}{2}$. Moreover, there exists a  scalar $\theta\in \K$ of modulus $1$ such that
\[
\|P_X\circ Q\circ E_Xx_1\| = \theta x_1^*(P_X\circ Q\circ E_Xx_1)\quad \text{~and~}\quad \|P_Y\circ Q\circ E_Xx_1\| = \theta y_1^*(P_Y\circ Q\circ E_Xx_1).
\]
Define the operator $S\in \mathcal{L}(X, Y)$ by
\[
Sx = P_Y\circ Q\circ E_Xx +\theta\frac{P_Y\circ Q\circ E_Xx_1}{\|P_Y\circ Q\circ E_Xx_1\|} x_1^*(P_X\circ Q\circ E_Xx) \qquad \bigl( x\in X\bigr).
\]
It is clear that $|y_1^*Sx_1| =1=\|S\|$. Finally,
\begin{align*}
\|S-T\|
&=\sup_{x\in B_X} \left\|P_Y\circ Q\circ E_Xx-Tx +\theta\frac{P_Y\circ Q\circ E_Xx_1}{\|P_Y\circ Q\circ E_Xx_1\|} x_1^*(P_X\circ Q\circ E_Xx)\right\|\\
&\leq \sup_{x\in B_X} \left(\left\|P_Y\circ Q\circ E_Xx-Tx\right\|+\left\|P_X\circ Q\circ E_Xx\right\|\right)\\
&= \sup_{x\in B_X} \|(Q-E_Y\circ T \circ P_X)(x,0)\|
\leq \|Q-E_Y\circ T \circ P_X\|
<\eps.\qedhere
\end{align*}
\end{proof}

\begin{corollary}\label{L1BPBZP} For arbitrary measures $\mu$ and $\nu$, the pair $(L_1(\mu),L_1(\nu))$ has the BPBZp.
\end{corollary}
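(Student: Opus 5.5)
We will deduce this directly from Theorem \ref{thm1}, applied with $X=L_1(\mu)$ and $Y=L_1(\nu)$. Two hypotheses have to be checked: that $n(L_1(\mu))=1$, and that $L_1(\mu)\oplus_1 L_1(\nu)$ has the weak BPBp-nu. The first is classical and already recalled in the introduction: $L_1(\mu)$ spaces have numerical index $1$.

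For the second, we identify $L_1(\mu)\oplus_1 L_1(\nu)$ isometrically with a single $L_1$ space. Take the disjoint union $\Omega=\Omega_\mu\sqcup\Omega_\nu$ of the underlying measure spaces, with the measure $\lambda$ that restricts to $\mu$ on $\Omega_\mu$ and to $\nu$ on $\Omega_\nu$. The map
$$(f,g)\longmapsto f\chi_{\Omega_\mu}+g\chi_{\Omega_\nu}$$
is then a surjective linear isometry from $L_1(\mu)\oplus_1 L_1(\nu)$ onto $L_1(\lambda)$, because $\|f\|_1+\|g\|_1=\|f\chi_{\Omega_\mu}+g\chi_{\Omega_\nu}\|_1$.

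Both BPBp-nu and weak BPBp-nu are invariant under surjective isometries. Indeed, such an isometry $J$ maps $\Pi$ onto $\Pi$ via $(x,x^*)\mapsto (Jx,(J^{-1})^*x^*)$, preserves numerical radii under $T\mapsto JTJ^{-1}$, and preserves all the distances involved. The introduction recalls that all $L_p(\mu)$ spaces with $1\le p<\infty$, in particular every $L_1(\lambda)$ for an arbitrary measure $\lambda$, have the BPBp-nu (\cite{KLM14, KLMM20}). Hence $L_1(\mu)\oplus_1 L_1(\nu)$ has the BPBp-nu, and therefore the weak BPBp-nu, since the latter is trivially implied by the former.

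Theorem \ref{thm1} then gives the BPBZp for $(L_1(\mu),L_1(\nu))$, with $\gamma(\eps)=\eta\bigl(\frac{\eps}{4+\eps}\bigr)$.

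The only delicate point is that the cited BPBp-nu result must hold for arbitrary measures, not only $\sigma$-finite ones, since the disjoint union of two arbitrary measure spaces need not be $\sigma$-finite. If the available reference were restricted to $\sigma$-finite measures, we would first reduce to that case by the usual separable-reduction argument for $L_1$. However, the introduction states the result for any measure, so we will simply invoke it.
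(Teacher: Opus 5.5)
Your proposal is correct and is essentially the paper's own argument: identify $L_1(\mu)\oplus_1 L_1(\nu)$ with an $L_1$ space, which has the BPBp-nu by \cite{KLM14} and hence the weak BPBp-nu, and then apply Theorem~\ref{thm1} using $n(L_1(\mu))=1$. The explicit disjoint-union isometry, the invariance of the property under surjective isometries, and your remark on non-$\sigma$-finite measures are details the paper leaves implicit.
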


\begin{proof}
Since $L_1(\mu)\oplus_1 L_1(\nu)$ is an $L_1$ space, $L_1(\mu)\oplus_1 L_1(\nu)$ has the BPBp-nu~(see \cite{KLM14}). Hence, $(L_1(\mu),L_1(\nu))$ has the BPBZp by Theorem \ref{thm1} and the fact that $n(L_1(\mu))=1$.
\end{proof}

Similarly to \cite[Theorem 2.3]{KLM16}, we obtain an $\ell_\infty$-sum version of Theorem \ref{thm1}.

\begin{theorem}\label{thm2}
Let $X$ and $Y$ be Banach spaces and suppose that $n(Y)=1$. If $X\oplus_\infty Y$ has the weak BPBp-nu with a function $\eta$, then $(X, Y)$ has the BPBZp with a function $\gamma\colon \eps\mapsto\eta\left(\frac{\eps}{4+\eps}\right)$ for $0<\eps<1$.
\end{theorem}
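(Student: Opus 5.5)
We dualize the proof of Theorem \ref{thm1}: in an $\ell_\infty$-sum the roles of points and functionals are exchanged. Let $P_X,P_Y$ be the projections and $E_X,E_Y$ the injections for $X\oplus_\infty Y$. Given $T\in\mathcal{L}(X,Y)$ with $\|T\|=1$ and $(x_0,y_0^*)\in S_X\times S_{Y^*}$ with $|y_0^*(Tx_0)|>1-\eta(\eps')$, where $\eps'=\frac{\eps}{4+\eps}$, we again consider $E_Y\circ T\circ P_X$. Its dual space is $X^*\oplus_1 Y^*$, so the relevant elements of $\Pi(X\oplus_\infty Y)$ are of the form $\bigl((x,y),(0,y^*)\bigr)$ with $(y,y^*)\in\Pi(Y)$ and $x\in B_X$.

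\textbf{Step 1: $\nu(E_Y T P_X)=1$.} We have
\[
|(0,y^*)(E_YTP_X(x,y))|=|y^*(Tx)|,
\]
and, since the corresponding $y$ is free, we can realize any $y^*\in S_{Y^*}$ by choosing $y$ with $(y,y^*)\in\Pi(Y)$ up to $\delta$. Hence $\nu\ge\|T\|$. To realize $y_0^*$ exactly I would use Bishop--Phelps--Bollob\'as on $Y$, or note that the condition $n(Y)=1$ only enters later. Alternatively, take $y_0$ close with $y_0^*(y_0)\approx1$ and absorb the error into $\eta$; this is where some care is needed. In any case we obtain a starting pair $\bigl((x_0,y_0),(0,y_0^*)\bigr)$.

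\textbf{Step 2: apply the weak BPBp-nu and normalize.} This gives $R$ and $\bigl((x_1,y_1),(x_1^*,y_1^*)\bigr)\in\Pi(X\oplus_\infty Y)$ close to the initial data. We set $Q=\nu(R)^{-1}R$ with $\|Q-E_YTP_X\|<\eps/2$, exactly as in Theorem \ref{thm1}. The analogue of "$y_1=0$" is now "$x_1^*=0$". Suppose $x_1^*\neq0$. We split the functional and bound
\[
|(x_1^*,y_1^*)Q(x_1,y_1)|\le\nu(Q)\|y_1^*\|+\|x_1^*\|\,\eps/2<1,
\]
using that $Q^*$ maps functionals supported on $X^*$ close to $(E_YTP_X)^*$, which kills them. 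This is a contradiction, so $x_1^*=0$ and $(y_1,y_1^*)\in\Pi(Y)$.

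\textbf{Step 3: construct $S$.} Now consider $P_Y Q$ restricted to $X\oplus_\infty Y$, or dually $Q^*E_{Y^*}$. Using $n(Y)=1$ exactly as $n(X)=1$ was used before, we show that the operator $(x,y)\mapsto P_YQ(x,y)$ has norm at most $1$. Here the rank-one correction trick is applied in $\mathcal{L}(Y)$: we define $H\in\mathcal{L}(Y)$ by adding to $P_YQE_Y$ a rank-one term built from $P_YQE_X$, and bound $\nu(H)$ by $\nu(Q)$.

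We then define
\[
Sx=P_YQE_Xx+(\text{rank-one correction})\,x_1^*(\cdots).
\]
This is arranged so that $|y_1^*(Sx_1)|=1=\|S\|$, while $\|S-T\|\le\|Q-E_YTP_X\|<\eps$. The approximations $\|x_1-x_0\|<\eps$ and $\|y_1^*-y_0^*\|<\eps$ are inherited from the weak BPBp-nu.

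\textbf{Main obstacle.} The main obstacle is Step 3's norm bound. There, the $\ell_\infty$ structure means $\|P_YQ\|=\sup\|P_YQE_Xx+P_YQE_Yy\|$, and this must be compared with numerical-radius values of $Q$ at states $\bigl((x,y),(0,y^*)\bigr)$. I would follow the adjoint argument: it is the proof of Theorem \ref{thm1} applied to $Q^*$ on $X^*\oplus_1Y^*$ restricted to the relevant states.
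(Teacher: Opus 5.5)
Your Steps 1--2, and the bound $\|P_YQ\|\le 1$ via $H(y)=P_YQE_Yy+q^*(y)\,P_YQE_Xp$ using $n(Y)=1$, follow the paper's route. The genuine gap is the construction of $S$ at the end of Step 3.

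You propose $Sx=P_YQE_Xx+(\text{rank-one correction})\,x_1^*(\cdots)$, modelled on Theorem~\ref{thm1}. But in Step 2 you have just proved $x_1^*=0$, so such a correction vanishes and you are left with $S=P_YQE_X$. That operator does not work. Indeed $1=|y_1^*(P_YQ(x_1,y_1))|$ with $P_YQ(x_1,y_1)=P_YQE_Xx_1+P_YQE_Yy_1$. Here $\|P_YQE_Y\|=\|P_Y(Q-E_YTP_X)E_Y\|<\eps/2$ is small but in general nonzero, so you only get $|y_1^*(P_YQE_Xx_1)|>1-\eps/2$. In the $\ell_\infty$-sum the correction has to be made on the domain side, not on the range side as in the $\ell_1$ case.

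You also never check that $x_1\in S_X$. The weak BPBp-nu only gives $\max\{\|x_1\|,\|y_1\|\}=1$, and $x_1^*=0$ yields $\|y_1\|=1$, not $\|x_1\|=1$. So $x_1$ is not yet an admissible point for the BPBZp, and this is not "inherited from the weak BPBp-nu".

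Two ingredients are missing.

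(i) Prove $\|x_1\|=1$. Since $\|x_1\|>1-\eps'>0$, if $\|x_1\|<1$ you can write $(x_1,y_1)=\|x_1\|\bigl(\tfrac{x_1}{\|x_1\|},y_1\bigr)+(1-\|x_1\|)(0,y_1)$. Then $\|P_YQ\|\le 1$ together with $|y_1^*(P_YQ(0,y_1))|=|y_1^*(P_Y(Q-E_YTP_X)(0,y_1))|<\eps/2$ gives $|y_1^*(P_YQ(x_1,y_1))|<1$, a contradiction.

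(ii) Take $x_2^*\in S_{X^*}$ with $x_2^*(x_1)=1$ and define $Sx=P_YQ(x,x_2^*(x)y_1)=P_YQE_Xx+x_2^*(x)\,P_YQE_Yy_1$. The map $x\mapsto(x,x_2^*(x)y_1)$ is an isometric embedding of $X$ into $X\oplus_\infty Y$ sending $x_1$ to $(x_1,y_1)$. Hence $\|S\|\le\|P_YQ\|=1$ and $|y_1^*(Sx_1)|=1$. Since $E_YTP_X(x,x_2^*(x)y_1)=E_YTx$, you also get $\|S-T\|\le\|Q-E_YTP_X\|<\eps/2$.

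A smaller point in Step 1: $((x_0,y_0),(0,y_0^*))$ belongs to $\Pi(X\oplus_\infty Y)$ only if $y_0^*(y_0)=1$. So you must first replace $y_0^*$ by a norm-attaining $z^*$ (Bishop--Phelps) with $\|z^*-y_0^*\|<\eps/2$ and still $|z^*(Tx_0)|>1-\eta(\eps')$. This is possible because the hypothesis is a strict inequality, so nothing has to be absorbed into $\eta$; absorbing it would change the function $\gamma$ in the statement. Then $\|y_1^*-y_0^*\|<\eps'+\eps/2<\eps$ follows from the triangle inequality rather than directly from the weak BPBp-nu.
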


\begin{proof} Let $P_X$ and $P_Y$ be the canonical projections from $X\oplus_\infty Y$ to $X$ and from $X\oplus_\infty Y$ to $Y$ respectively, and $E_X$ and $E_Y$ be the canonical injections from $X$ to $X\oplus_\infty Y$ and from $Y$ to $X\oplus_\infty Y$ respectively.

For $0<\eps<1$, define $\eps' = \frac{\eps}{4+\eps}$ and assume $T\in \mathcal{L}(X, Y)$ with $\|T\|=1$ and $(x_0,y^*_0)\in S_X\times S_{Y^*}$ satisfy 
$$|y_0^*(Tx_0)|>1-\eta(\eps').$$

For $E_Y\circ T \circ P_X \in \mathcal{L}$($X\oplus_\infty Y$), from the Bishop-Phelps theorem, we see that
\begin{align*}
\nu(E_Y\circ T \circ P_X) &\leq \|E_Y\circ T \circ P_X\|=\|T\|= \sup \left\{|y^*(Tx)|\colon x\in S_X,~\text{norm attaining~}y^*\in S_{Y^*}\right\} \\ 
& = \sup \left\{ \bigl|(0,y^*)(E_Y\circ T \circ P_X(x,y))\bigr| \colon x\in S_{X},~(y,y^*)\in \Pi(Y)\right\} \\ 
&=\sup\left\{\bigl|(0,y^*)(E_Y\circ T \circ P_X(x,y))\bigr|\colon \bigl((x,y),(0,y^*)\bigr)\in\Pi(X\oplus_\infty Y)\right\}\\
&\leq \nu(E_Y\circ T \circ P_X).
\end{align*}
Hence, $\nu(E_Y\circ T \circ P_X)=\|E_Y\circ T \circ P_X\|=\|T\|=1$. We now take a norm attaining functional $z^*\in S_{Y^*}$ and its norm attaining point $z\in S_Y$ such that 
$$|z^*(Tx_0)| >1-\eta(\eps'),~z^*(z)=1~\text{and}~\|z^*-y_0^*\|<\eps/2$$
 from Bishop-Phelps theorem. Since it holds that 
$$\bigl((x_0,z),(0,z^*)\bigr)\in\Pi(X\oplus_\infty Y) \text{~and~}|(0, z^*)E_Y\circ T \circ P_X (x_0, z) |=|z^*(Tx_0)| >1-\eta(\eps'),$$
there exist $\bigl((x_1,y_1),(x_1^*,y_1^*)\bigr)\in\Pi(X\oplus_\infty Y)$ and $R\in \mathcal{L}(X\oplus_\infty Y)$ satisfying
$\nu(R)=\bigl|(x_1^*,y_1^*)R(x_1,y_1)\bigr|$, $\|R-E_Y\circ T \circ P_X \|<\eps'$, $\max\{\|x_1-x_0\|, \|y_1 -z\|\} <\eps'$ and $\|x_1^*\|+\|y_1^* - z^*\|<\eps'$.

Since $|\nu(R)-\nu(E_Y\circ T \circ P_X)|\leq \|R-E_Y\circ T \circ P_X\|<\eps'<1/5$ and $\nu(E_Y\circ T \circ P_X)=1=\|E_Y\circ T \circ P_X\|$, the operator $Q = {\nu(R)^{-1}}R\in\mathcal{L}(X\oplus_\infty Y)$ is well defined, and it holds that $\nu(Q) =1= \bigl|(x_1^*, y^*_1)Q(x_1, y_1)\bigr|$. Moreover, we have
\begin{align*}
\| Q - E_Y\circ T \circ P_X\|&\leq \| Q - R\|+\|R-E_Y\circ T \circ P_X\|\\
&< \|R\|\nu(R)^{-1}|1-\nu(R)|+ \eps' <(1+\eps')(1-\eps')^{-1}\eps'+\eps'=\eps/2.
\end{align*}

At this moment, we see that $x_1^*=0$ which implies $\|y_1^*\|=\|y_1\|=y_1^*(y_1) =1$ and $\|y_1^*-y_0^*\|<\eps$. This is followed by the easy observation $ \bigl|(0, y^*) U(x,y)\bigr|\leq \nu(U)\|y^*\|$ for any $\bigl((x, y),(x^*, y^*)\bigr)\in \Pi(X\oplus_\infty Y)$ and $U\in L(X\oplus_\infty Y)$.
Indeed, if $\|x_1^*\|\neq 0$, then
\begin{align*}
1&= \bigl|(x_1^*, y_1^*) Q(x_1, y_1) \bigr| \leq \left|\left(\frac{x_1^*}{\|x_1^*\|}, 0\right) Q(x_1, y_1) \right|\|x_1^*\| + \left|\left(0,y_1^*\right) Q(x_1,y_1) \right|\\
& \leq \left|\left(\frac{x_1^*}{\|x_1^*\|}, 0\right) (Q-E_Y\circ T \circ P_X)(x_1, y_1) \right|\|x_1^*\| + \nu(Q)\|y_1^*\|\\
&\leq \eps\|x_1^*\| +\|y_1^* \| < \|x_1^*\| + \|y_1^*\|=1
\end{align*}
which is a contradiction. This leads us to have $1=\nu(Q)=\bigl|(0, y_1^*) Q(x_1, y_1) \bigr|$.

We now see that $\|P_Y\circ Q\|=1$. Since we have that
$$1=\nu(Q)= |(0, y_1^*)Q(x_1, y_1)|=| y_1^* (P_Y\circ Q (x_1, y_1))|\leq \|P_Y\circ Q\|,$$
it is enough to show that $\|P_Y\circ Q(p,q)\|\leq 1$ for any fixed $(p,q) \in S_{X}\times S_{Y}$. Take $q^*\in S_{Y}$ satisfying $q^*(q)=1$, and define $H_{p,q} \in L(Y)$ by 
$$H_{p,q}(y)=P_Y\circ Q (q^*(y)p,y)\qquad (y\in Y).$$
Then, from the assumption $n(Y)=1$, we have
\begin{align*}
\|P_Y\circ Q(p,q)\|&=\|H_{p,q}q\|\leq \|H_{p,q}\|=\nu(H_{p,q}) \\ 
&\leq \sup\left\{|y^*(H_{p,q}y)|\colon (y,y^*)\in \Pi(Y) \right\}\\
&= \sup\left\{ |y^*(P_Y\circ Q(q^*(y)p,y))| \colon (y,y^*)\in \Pi(Y) \right\} \\
&\leq \sup\left\{ \bigl|y^* P_Y\circ Q(x,y)\bigr| \colon x\in B_X,\,~(y,y^*)\in \Pi(Y)\right\}\\
&=\sup\left\{ \bigl|(0,y^*) Q(x,y)\bigr| \colon \bigl((x,y),(0,y^*)\bigr)\in \Pi(X\oplus_\infty Y)\right\}\leq \nu(Q) =1.
\end{align*}

Therefore, we have that 
$$1=\|P_Y\circ Q\|=|y_1^*(P_Y\circ Q(x_1,y_1))|.$$
Moreover, we have that $\|x_1\|=1$. Otherwise, we see
\begin{align*}
|y_1^*( P_Y\circ Q (x_1,y_1))| &\leq \|x_1\|\left|y_1^*\left( P_Y\circ Q \left(\frac{x_1}{\|x_1\|},y_1\right)\right)\right| + (1-\|x_1\|)|y_1^*( P_Y\circ Q (0,y_1))|\\ &\leq \|x_1\| + (1-\|x_1\|)=1
\end{align*}
since $\|x_1-x_0\|<\eps$ implies $\|x_1\|>1-\eps$. Hence, we have $|y_1^*( P_Y\circ Q (0,y_1))|=1$ which contradicts to 
$$|y_1^*( P_Y\circ Q (0,y_1))|=|y_1^*( (P_Y\circ Q-T\circ P_X) (0,y_1))|\leq \|Q-E_Y\circ T\circ P_X\|<\eps/2.$$
For $x_2^*\in S_{X^*}$ with $x_2^*(x_1)=1$, define $S\in \mathcal{L}(X,Y)$ by
\[
Sx = P_Y\circ Q (x, x_2^*(x)y_1) \qquad \bigl(x\in X\bigr).
\]
Since it is clear that $|y_1^*Sx_1|=1=\|S\|$ and we have for any $x\in S_X$ that
\begin{align*}
\|Sx-Tx\|
&= \|P_Y\circ Q (x, x_2^*(x)y_1)-Tx\|\\
&\leq \|Q (x, x_2^*(x)y_1)-E_Y\circ T \circ P_X (x,x_2^*(x)y_1)\|\\
&\leq \|Q-E_Y\circ T \circ P_X\|<\eps/2,
\end{align*}
we finish the proof.
\end{proof}

\begin{corollary} The pair $(c_0,c_0)$ has the BPBZp.
\end{corollary}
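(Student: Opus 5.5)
The plan is to apply Theorem~\ref{thm2} with $X=Y=c_0$. Two things must be checked. First, $n(c_0)=1$. This is classical: $c_0$ is an isometric predual of $\ell_1$, so it is a $C_0(L)$ space with $L=\N$ discrete, as recalled in the introduction. Second, $c_0\oplus_\infty c_0$ has the weak BPBp-nu. For this, observe that $c_0\oplus_\infty c_0$ is isometrically isomorphic to $c_0$ itself. The isometry interleaves coordinates: $\bigl((a_n),(b_n)\bigr)\mapsto (a_1,b_1,a_2,b_2,\dots)$. It is linear, bijective and preserves the sup norm, since $\max\{\sup_n|a_n|,\sup_n|b_n|\}=\sup$ of the interleaved sequence.

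The BPBp-nu, and hence the weak BPBp-nu, is invariant under surjective linear isometries. If $J\colon Z\to W$ is such an isometry, then $(z,z^*)\in\Pi(Z)$ if and only if $(Jz,(J^{-1})^*z^*)\in\Pi(W)$. Moreover, $\nu(JTJ^{-1})=\nu(T)$, and all the relevant distances are preserved. Consequently the function $\eta$ transfers unchanged from $c_0$ to $c_0\oplus_\infty c_0$. Since $c_0$ has the BPBp-nu (as recalled in the introduction, see \cite{GK13}), so does $c_0\oplus_\infty c_0$. In particular it has the weak BPBp-nu.

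Theorem~\ref{thm2} then gives directly that $(c_0,c_0)$ has the BPBZp, with the function $\eps\mapsto\eta\bigl(\frac{\eps}{4+\eps}\bigr)$, where $\eta$ is the BPBp-nu function of $c_0$. Theorem~\ref{thm1} could be used in the same way, except that it would require the BPBp-nu for $c_0\oplus_1 c_0$, which is not available.

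There is no serious obstacle here. The only point needing care is the isometric identification $c_0\oplus_\infty c_0\cong c_0$ together with the invariance of the property under it. In the complex case the argument is identical, because both facts used, $n(c_0)=1$ and the BPBp-nu of $c_0$, hold over both fields.
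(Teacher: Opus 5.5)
Your proposal is correct and follows the paper's own proof: it applies Theorem~\ref{thm2} using $n(c_0)=1$ and the fact that $c_0\oplus_\infty c_0$ is isometrically isomorphic to $c_0$, which has the BPBp-nu by \cite{GK13}. The details you add, namely the explicit interleaving isometry and the invariance of the (weak) BPBp-nu under surjective isometries, are exactly what the paper leaves implicit.
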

\begin{proof}
Note that $c_0\oplus_\infty c_0$ is isometrically isomorphic to $c_0$ and it has  BPBp-nu (see \cite{GK13}). Hence $(c_0, c_0)$ has the BPBZp by Theorem~\ref{thm2}. 
\end{proof}

\begin{corollary} For compact metric spaces $K_1$ and $K_2$, the pair $(C(K_1),C(K_2))$ has the BPBZp in the real case.
\end{corollary}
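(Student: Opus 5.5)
The plan is to mirror the proof of the $(c_0,c_0)$ corollary and apply Theorem~\ref{thm2} with $X=C(K_1)$ and $Y=C(K_2)$. Two hypotheses have to be checked.

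First, $n(Y)=1$. This holds because $C(K_2)$ is an $L_1$-predual; as recalled in the introduction, every $C_0(L)$ space has numerical index $1$, and this is true in the real case as well.

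Second, $X\oplus_\infty Y$ must have the weak BPBp-nu (the full BPBp-nu is more than enough). Here I would use the isometric identification
\[
C(K_1)\oplus_\infty C(K_2)\cong C(K_1\sqcup K_2),
\]
where $K_1\sqcup K_2$ is the topological disjoint union. A function on $K_1\sqcup K_2$ is simply a pair of functions, and the sup norm over the union is the maximum of the two sup norms. Since $K_1$ and $K_2$ are compact metric spaces, $K_1\sqcup K_2$ is again a compact metric space. One can define the metric to agree with the original metrics on each piece and to equal, say, $\operatorname{diam}K_1+\operatorname{diam}K_2+1$ between points of different pieces.

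Next I would invoke the result of \cite{AGR14}, mentioned in the introduction, that real $C(K)$ has the BPBp-nu whenever $K$ is a compact metric space. Their condition is local compensation, which metrizable compacta satisfy. So $C(K_1\sqcup K_2)$, and hence $C(K_1)\oplus_\infty C(K_2)$, has the BPBp-nu. The property transfers across the isometry because numerical radius, the set $\Pi$, and all the norms in the definition are invariant under isometric isomorphisms. The BPBp-nu implies the weak BPBp-nu, so Theorem~\ref{thm2} gives the BPBZp for $(C(K_1),C(K_2))$.

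The only delicate step is confirming the precise scope of \cite{AGR14}. The result there must cover every compact metric space, or at least a class closed under finite disjoint unions, and it must be valid in the real case; this is also why the statement is restricted to real scalars. If \cite{AGR14} is phrased in terms of local compensation, I would additionally note that a disjoint union of two spaces with local compensation again has it. The reason is that the condition is local, and each point has a neighbourhood lying inside a single piece.
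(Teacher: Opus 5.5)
Your proposal is correct and matches the paper's proof essentially step for step. The paper also identifies $C(K_1)\oplus_\infty C(K_2)$ with $C(K_1\sqcup K_2)$ for the compact metrizable disjoint union, invokes the real BPBp-nu for $C(K)$ with $K$ compact metric from \cite{AGR14} (their Corollary~3.3 covers every compact metric space, so your extra remark about local compensation under disjoint unions is not needed), and applies Theorem~\ref{thm2} using $n(C(K_2))=1$.
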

\begin{proof} For the disjoint union $K_1\cup K_2$ with the disjoint union topology, it is clear that $K_1\cup K_2$ is a compact metrizable space and $C(K_1\cup K_2)$ is isometrically isomorphic to $C(K_1)\oplus_\infty C(K_2)$. Since $C(K_1\cup K_2)$ has the BPBp-nu in the real case by \cite[Corollary 3.3]{AGR14} and $n(C(K_2))=1$, we get that $(C(K_1),C(K_2))$ has the BPBZp from Theorem \ref{thm2}.
\end{proof}

The authors of the paper \cite{AGR14} introduced the concept of local compensation as a sufficient condition for a compact Hausdorff space $K$ to have the property that $C(K)$ has the BPBp-nu in the real case, and they also showed that every compact metric space admits local compensation (see \cite[Definition 2.1, Theorem 2.2, Theorem 3.2]{AGR14}). Recall that $C(\beta \N)$ is isometrically isomorphic to $\ell_\infty$ where $\beta \N$ is the Stone-\v{C}ech compactification of $\N$, and it is not known whether $\beta \N$ admits local compensation. In the following section, we will show that $(\ell_\infty, \ell_\infty)$ has the BPBZp in order to prove $\ell_\infty$ has the BPBp-nu in the real case, but this does not solve the question of the admision of local compensation for $\beta \N$.

\section{BPBZp for some classical Banach spaces}\label{section-BPBZp-pair-classical}
In this section, we aim to investigate the BPBZp for pairs of classical Banach spaces. We begin by examining pairs involving $\ell_1$ as the domain or $c_0$ and $\ell_\infty$ as the range, and conclude by studying pairs of the form $(L_1(\mu), Y)$.

\begin{definition}
Let $X$ be a Banach space and $Z$ be a subspace of $X^*$. We say that $X$ has \textit{$Z$-AHSp} if, for given $\eps>0$, there is $\eta(\eps)>0$ such that whenever $\{x_k\}_{k\in \mathbb{N}}\subset B_X$, $x^*\in S_Z$, and $a=(a_k)\in S_{\ell_1}$ with $a_k\geq 0$ for all $k\in\mathbb{N}$ satisfy
$$
\re x^*\left( \sum_{k=1}^{\infty} a_k x_k \right)>1-\eta(\eps),
$$
there are $A\subset \mathbb{N}$, $\{z_k\}_{k\in A}\subset S_X$ and $z^*\in S_{Z}$ such that
\begin{enumerate}
\item[(i)] $\sum_{k\in A} a_k>1-\eps$,
\item[(ii)] $\|z_k-x_k\|<\eps$ for all $k\in A$,
\item[(iii)] $z^*(z_k)=1$ for all $k\in A$,
\item[(iv)] $\|z^*-x^*\|<\eps$.
\end{enumerate}
\end{definition}

\begin{remark}\label{remAHSp} From their definitions, it is clear that if $X$ has the $X^*$-AHSp, then $X$ has the AHSp of Definition \ref{AHSp1}, and that $X$ has the AHSp for the pair $(X,X^*)$ of Definition \ref{AHSppair} if and only if $X^*$ has the $X$-AHSp. We also note that in order to prove that $X$ has the $Z$-AHSp, it is enough to check the conditions for every $a=(a_k)_{k\in \mathbb{N}}\in S_{\ell_1}$ with finite support with the same function $\eta$.
\end{remark}

We first give the following variant of \cite[Theorem 3.6]{ABGM13}.

\begin{theorem}\label{l1charac}
For a Banach space $Y$, the pair $(\ell_1,Y)$ has the BPBZp if and only if $Y$ has the $Y^*$-AHSp.
\end{theorem}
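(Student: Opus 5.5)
We follow the scheme of \cite{AAGM08} for $(\ell_1,Y)$ and the AHSp, now keeping track of the functional on $Y$. The basic dictionary is this. An operator $T\in\call(\ell_1,Y)$ with $\|T\|=1$ is the same thing as a bounded sequence $x_k=Te_k\in B_Y$ with $\sup_k\|x_k\|=1$. A point $x_0\in S_{\ell_1}$ is $x_0=\sum_k\alpha_k e_k$ with $\sum_k|\alpha_k|=1$. Writing $\alpha_k=a_k\lambda_k$ with $a_k=|\alpha_k|\ge 0$ and $|\lambda_k|=1$, and choosing a unimodular $\theta$, the condition $|y_0^*(Tx_0)|>1-\eta$ becomes
$$\re\,\theta y_0^*\Bigl(\sum_k a_k\lambda_k x_k\Bigr)>1-\eta .$$
This is exactly the hypothesis of the $Y^*$-AHSp applied to the sequence $\{\lambda_k x_k\}\subset B_Y$ and the functional $\theta y_0^*\in S_{Y^*}$.

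\textbf{The $Y^*$-AHSp implies the BPBZp.} Apply the $Y^*$-AHSp with a small $\eps'$ to get $A\subset\N$ with $\sum_{k\in A}a_k>1-\eps'$, vectors $z_k\in S_Y$ with $\|z_k-\lambda_kx_k\|<\eps'$ for $k\in A$, and $z^*\in S_{Y^*}$ with $z^*(z_k)=1$ and $\|z^*-\theta y_0^*\|<\eps'$.
\begin{itemize}
\item Define $S$ by $Se_k=\overline{\lambda_k}z_k$ for $k\in A$ and $Se_k=Te_k$ for $k\notin A$. Then $\|S\|=1$ and $\|S-T\|=\sup_k\|Se_k-Te_k\|<\eps'$.
\item Put $x_1=\bigl(\sum_{k\in A}a_k\bigr)^{-1}\sum_{k\in A}a_k\lambda_ke_k$. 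This gives $\|x_1-x_0\|\le 2\bigl(1-\sum_{k\in A}a_k\bigr)<2\eps'$.
\item Put $y_1^*=\overline{\theta}z^*$, so $\|y_1^*-y_0^*\|<\eps'$.
\item Then $|y_1^*(Sx_1)|=\bigl|\bigl(\sum_{k\in A}a_k\bigr)^{-1}\sum_{k\in A}a_kz^*(z_k)\bigr|=1$.
\end{itemize}
Taking $\eps'=\eps/2$ gives the BPBZp. By Remark~\ref{remAHSp}, it is harmless to restrict to finitely supported $a$ when convenient.

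\textbf{The BPBZp implies the $Y^*$-AHSp.} By Remark~\ref{remAHSp}, it suffices to treat finitely supported $a$. Given $\{x_k\}\subset B_Y$, $x^*\in S_{Y^*}$ and $a$ with $\re x^*(\sum a_kx_k)>1-\eta$, define $T e_k=x_k$, so $\|T\|\le 1$, and set $x_0=\sum a_ke_k$. If $\|T\|<1$, first normalize to $T/\|T\|$; since $\|T\|>1-\eta$, this costs only $O(\eta)$. The BPBZp then yields $S$, $x_1$ and $y_1^*$ with $\|S\|=|y_1^*(Sx_1)|=1$, $\|x_1-x_0\|<\delta$, $\|S-T\|<\delta$ and $\|y_1^*-x^*\|<\delta$. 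Replace $y_1^*$ by a unimodular multiple $\mu y_1^*$ so that $y_1^*(Sx_1)=1$. Writing $x_1=\sum_k\beta_ke_k$, the identity
$$\sum_k|\beta_k|=1=\sum_k\beta_k\,\mu y_1^*(Se_k),\qquad |\mu y_1^*(Se_k)|\le 1,$$
forces $\beta_k\,\mu y_1^*(Se_k)=|\beta_k|$ for every $k$ with $\beta_k\neq0$.

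Now set $A=\{k:\beta_k\neq0 \text{ and } |\beta_k-a_k|<\sqrt{\delta}\,a_k\}$. A Markov-type count gives $\sum_{k\notin A}a_k\le\sqrt{\delta}+\delta$ or so. For $k\in A$ the coefficient $\beta_k$ is close to the positive real $a_k$ in relative terms. Its phase therefore lies near $1$, and since $\beta_k\,\mu y_1^*(Se_k)=|\beta_k|>0$ we get exactly $\mu y_1^*(Se_k)=\overline{\sign\beta_k}$.

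We then set $z_k=\sign(\beta_k)\,Se_k\in S_Y$ and $z^*=\mu y_1^*$.
\begin{itemize}
\item $z^*(z_k)=1$ by construction.
\item $\|z_k-x_k\|\le|\sign\beta_k-1|+\|Se_k-Te_k\|\le C\sqrt{\delta}$.
\end{itemize}

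\textbf{Main obstacle.} The delicate point is the unimodular factor $\mu$: we still need $\|\mu y_1^*-x^*\|$ small. To see that $\mu$ is near $1$, note that
$$\re\,\mu y_1^*(Sx_1)=1,$$
while $Sx_1$ is within $2\delta$ of $Tx_0=\sum a_kx_k$, and $\re x^*(Tx_0)>1-\eta$, with $y_1^*$ close to $x^*$. Together these force $\re\mu\approx1$, hence $|\mu-1|=O(\sqrt{\delta+\eta})$. Choosing $\delta$ and $\eta$ suitably small in terms of $\eps$ then closes the argument.
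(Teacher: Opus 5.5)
Your proposal is correct and is essentially the paper's proof. The paper reruns the argument of \cite[Theorem 3.6]{ABGM13} (modelled on \cite{AAGM08}) with the roles of $Y$ and $Y^*$ interchanged, which is exactly your dictionary $Te_k=x_k$, $x_0=\sum_k a_k\lambda_k e_k$, including controlling the unimodular rotation $\mu$ of the functional, which you handle correctly via $\re\mu>1-O(\delta+\eta)$. Only harmless bookkeeping needs adjusting: after normalizing $T$ one gets $\|Sx_1-Tx_0\|<2\delta+\eta$ rather than $2\delta$, and for infinite $A$ the bound $\|S-T\|=\sup_{k\in A}\|z_k-\lambda_kx_k\|$ is only $\leq\eps'$, which your choice $\eps'=\eps/2$ already absorbs.
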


\begin{proof}Since the proof is almost the same as that of \cite[Theorem 3.6]{ABGM13}, we highlight only the key differences rather than providing the full details. In the case of \cite[Theorem 3.6]{ABGM13}, the sequence of vectors and a functional had to be chosen in $Y^*$ and $Y$ respectively, since a bilinear form on $\ell_1\times Y$ corresponds to an operator in $\mathcal{L}(\ell_1,Y^*)$. Since we are working with an operator in $\mathcal{L}(\ell_1,Y)$, we apply the same proof for the sequence of vectors and a functional in $Y$ and $Y^*$ respectively.
\end{proof}

The above characterization allows us to provide a list of spaces $Y$ with the $Y^*$-AHSp.

\begin{example}\label{exampleY*AHSP} A Banach space $Y$ has the $Y^*$-AHSp if it satisfies one of the following.
\begin{enumerate}
\item $Y$ is finite dimensional.
\item $Y$ is uniformly convex.
\item $Y=L_1(\mu)$ for an arbitrary measure $\mu$.
\end{enumerate}
\end{example}

\begin{proof}
(1) and (2): both $Y$ are reflexive and it is shown in \cite[Proposition 4.1 and 4.2]{ABGM13} that $Y_*$ has the AHSp for the pair $(Y_*,Y)$ where $Y_*$ is the predual of $Y$. Remark \ref{remAHSp} gives that $Y$ has the $Y^*~(\equiv Y_*)$-AHSp, as desired.

(3): This follows from Corollary \ref{L1BPBZP} and Theorem~\ref{l1charac}.
\end{proof}

Here is an example of $X$ without $X^*$-AHSp.

\begin{example}\label{CLexample} $C_0(L)$ does not have $C_0(L)^*$-AHSp for any infinite locally compact Hausdorff space $L$. 
\end{example}

\begin{proof} In order to prove this, we use the well-known identification $C_0(L)^*=M(L)$ where $M(L)$ is the space of scalar-valued regular Borel measures on $L$. For the reference, see \cite{Folland}. 

Assume that $C_0(L)$ has $C_0(L)^*$-AHSp with a function $\eta$. For $n_0\in \mathbb{N}$ such that $\frac{1}{n_0}<\eta\left(\frac{1}{2}\right)$, there exist disjoint compact sets $\Omega_1,\Omega_2,\ldots,\Omega_{n_0}\subset L$ since $L$ is infinite.

For each regular Borel probability measure $\mu_i$ on $\Omega_i$, define the  measure $\mu$ on $L$ by $\mu=\sum_{k=1}^{n_0}\frac{1}{n_0}\mu_k$, and take a function $f_i\in S_{C_0(L)}$ such that $f_i=0$ on $\Omega_i$ and $f_i=1$ on $\bigcup_{k\neq i} \Omega_k$. 

We note that 
$$\mu\left(\sum_{k=1}^{n_0}\frac{1}{n_0}f_k\right)=1-\frac{1}{n_0}>1-\eta\left(\frac{1}{2}\right).$$

From the assumption, there exist $A\subset \{1,2,\ldots,n_0\}$, $\{h_k\}_{k\in A}\subset S_{C_0(L)}$, and $\nu \in S_{M(L)}$, such that
\begin{enumerate}
\item[(i)] $\sum_{k\in A}\frac{1}{n_0} >\frac{1}{2}$,
\item[(ii)] $\|h_k-f_k\|<\frac{1}{2}$ for all $k\in A$,
\item[(iii)] $\nu(h_k)=1$ for all $k\in A$,
\item[(iv)] $\|\nu-\mu\|<\frac{1}{2}$.
\end{enumerate}
Write $$\nu=\sum_{k=1}^{n_0}\nu|_{\Omega_k}+\nu|_{L\setminus {\bigcup_{k=1}^{n_0} \Omega_k}}$$ where for a set $\Omega\subset L$, $\nu|_\Omega$ is the restriction of $\nu$ on $\Omega$. We see that 
$$1=\nu(h_i)\leq \sum_{k=1}^{n_0}\big|\nu|_{\Omega_k}(h_i)\big|+\big|\nu|_{L\setminus {\bigcup_{k=1}^{n_0} \Omega_k}}(h_i)\big|\leq \sum_{k=1}^{n_0}\big\|\nu|_{\Omega_k}\big\|+\big\|\nu|_{L\setminus {\bigcup_{k=1}^{n_0} \Omega_k}}\big\|=1$$
for every $i\in A$. This gives that $\big|\nu|_{\Omega_i}(h_i)\big|=\big\|\nu|_{\Omega_i}\big\|$. Since $|h_i|<\frac{1}{2}$ on $\Omega_i$, we have that $\left\|\nu|_{\Omega_i}\right\|=0$ for each $i\in A$. Hence, we have that 
$$\nu=\sum_{k\in A^c}\nu|_{\Omega_k}+\nu|_{L\setminus {\bigcup_{k=1}^{n_0} \Omega_k}}.$$
Therefore, we deduce the following which is the desired contradiction.
\begin{align*}
\|\nu-\mu\|
&=\sum_{k\in A}\big\|\mu|_{\Omega_k}\big\|+\sum_{k\in A^c}\big\|(\nu-\mu)|_{\Omega_k}\big\|+\big\|(\nu-\mu)|_{L\setminus {\bigcup_{k=1}^{n_0} \Omega_k}}\big\|\\
&\geq \sum_{k\in A}\frac{1}{n_0}\left\|\mu_k\right\|>\frac{1}{2}.\qedhere
\end{align*}
\end{proof}

The following is a direct consequence of Theorem \ref{l1charac} and Example \ref{CLexample}, using Theorems \ref{thm1} and \ref{thm2} (as both $\ell_1$ and $C_0(L)$ have numerical index $1$). It will be generalized for more general Lebesgue spaces in Corollary \ref{KLMstrengthen}, but we present it here for the sake of completeness.

\begin{corollary}\label{l1bpbpnu}
For an infinite locally compact Hausdorff space $L$, the pair $(\ell_1,C_0(L))$ does not have BPBZp. In particular, both $\ell_1\oplus_1 C_0(L)$ and $\ell_1\oplus_\infty C_0(L)$ do not have the BPBp-nu.
\end{corollary}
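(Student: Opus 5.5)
The plan is to chain the results already available. The first assertion comes from Theorem \ref{l1charac}: the pair $(\ell_1,C_0(L))$ has the BPBZp if and only if $C_0(L)$ has the $C_0(L)^*$-AHSp. Example \ref{CLexample} shows that, for an infinite locally compact Hausdorff space $L$, $C_0(L)$ fails the $C_0(L)^*$-AHSp. Hence $(\ell_1,C_0(L))$ fails the BPBZp.

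For the ``in particular'' part, we argue by contraposition, using Theorems \ref{thm1} and \ref{thm2} with $X=\ell_1$ and $Y=C_0(L)$. The BPBp-nu implies the weak BPBp-nu with the same function $\eta$, since the requirement $\nu(S)=1$ is simply dropped. Suppose $\ell_1\oplus_1 C_0(L)$ had the BPBp-nu. Since $n(\ell_1)=1$, Theorem \ref{thm1} would give that $(\ell_1,C_0(L))$ has the BPBZp, which is a contradiction. Now suppose $\ell_1\oplus_\infty C_0(L)$ had the BPBp-nu. Since $n(C_0(L))=1$, Theorem \ref{thm2} would again give the BPBZp for $(\ell_1,C_0(L))$, which is again a contradiction.

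There is no real obstacle here. The only points to check are the hypotheses on the numerical indices. We have $n(\ell_1)=1$ because $\ell_1$ is an $L_1$ space. We have $n(C_0(L))=1$ because $C_0(L)$ is an isometric predual of an $L_1$ space. Both facts are recalled in the introduction. One should also note that Theorem \ref{thm1} puts the numerical index hypothesis on the first summand and Theorem \ref{thm2} puts it on the second. This matches the order $(\ell_1,C_0(L))$ in both sums, so no swapping of summands is needed.
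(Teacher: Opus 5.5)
Your proposal is correct and follows essentially the same route as the paper, which also derives the failure of the BPBZp from Theorem~\ref{l1charac} together with Example~\ref{CLexample}. It then obtains the failure of the BPBp-nu for both sums by contraposition through Theorems~\ref{thm1} and~\ref{thm2}, using $n(\ell_1)=n(C_0(L))=1$. Your remarks that the BPBp-nu implies the weak BPBp-nu and that the numerical-index hypotheses fall on the correct summands are exactly the checks needed.
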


For the complex Banach space $X=\ell_1\oplus_\infty c_0$, we are going to show that the set of numerical radius attaining operators is dense in $\mathcal{L}(X)$. This space $X$ is the first known `natural' Banach space (i.e.\ constructed without somehow ``artificial'' renormings) which fails the BPBp-nu but such that the numerical radius attaining operators are dense in $X$ (see \cite{KLM14,KLM16} where such examples were constructed using renormings).

\begin{proposition}\label{prop:nra-dense-complex-l1c0}
For the complex Banach space $X=\ell_1\oplus_\infty c_0$, the set of numerical radius attaining operators is dense in $\mathcal{L}(X)$. 
\end{proposition}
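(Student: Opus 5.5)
The plan is to use $n(X)=1$ to reduce numerical radius attainment to attainment of the norm at a pair $(x,x^*)\in\Pi(X)$ in which $x^*$ is a well-chosen extreme functional of $B_{X^*}$.

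Both $\ell_1$ and $c_0$ have numerical index $1$, and numerical index $1$ is stable under $\ell_\infty$-sums. Hence $n(X)=1$ and $\nu(T)=\|T\|$ for every $T\in\mathcal{L}(X)$. It is therefore enough to show the following: every $T$ is approximated by some $S$ for which there is $(x,x^*)\in\Pi(X)$ with $|x^*(Sx)|=\|S\|$.

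We write $X^*=\ell_\infty\oplus_1\ell_1$ and decompose $T=(T_1,T_2)$ with $T_1\colon X\to\ell_1$ and $T_2\colon X\to c_0$. Then
\[
\|T\|=\max\bigl\{\|T_1\|,\ \sup_n\|T_2^*e_n^*\|\bigr\}.
\]
We split into two cases according to which term realises (or nearly realises) the maximum.

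\emph{Case A: the $c_0$ part.} Here $\|T_2^*e_n^*\|$ comes close to $\|T\|$ for some $n$. Following the proof that property $\beta$ gives norm attainment, we perturb $T$ in two steps.
\begin{itemize}
\item First, by a small diagonal scaling of $T_2$, we make the single coordinate $n$ carry the full norm, so that $\|S\|=\|S^*(0,e_n^*)\|$.
\item Next, we write $S^*(0,e_n^*)=(\varphi,\psi)\in\ell_\infty\oplus_1\ell_1$. The $\ell_1$ component $\psi$ attains its norm on $B_{c_0}$. The $\ell_\infty$ component $\varphi$ need not attain its norm on $B_{\ell_1}$, but a small rank-one perturbation (adding $\delta\,\overline{\sign\varphi_m}\,e_m^*$ composed with $e_n$, for a coordinate $m$ where $|\varphi_m|$ is almost $\|\varphi\|$) makes it attain at $\overline{\sign\varphi_m}e_m$.
\end{itemize}
This produces $x=(u,v)\in S_X$ with $|S^*(0,e_n^*)(x)|=\|S\|$. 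We still need $(x,(0,\theta e_n^*))\in\Pi(X)$, that is, $|v_n|=1$. We are free to choose the coordinates of $v$ off the support of $\psi$. If $n\notin\operatorname{supp}\psi$, we set $v_n=1$. If $n\in\operatorname{supp}\psi$, then $|v_n|=1$ is forced anyway, and $\theta=\overline{v_n}$ works. In both cases $|x^*(Sx)|=\|S\|$ with $x^*=(0,\theta e_n^*)$.

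\emph{Case B: the $\ell_1$ part.} Here $\|T_1\|$ is essentially the norm. We use $\|T_1\|=\sup_k\|T_1^*\|_{\ell_\infty}$-type estimates on the columns of $T_1$ restricted to the $\ell_1$ summand. More precisely, $\|T_1\|$ equals the supremum of $\|T_1(e_k,0)+T_1(0,w)\|_1$ over extreme points of $B_{\ell_1}$ and over $w\in B_{c_0}$.
\begin{itemize}
\item By an arbitrarily small perturbation (a finite-dimensional truncation of the $c_0$ variable, then a slight enlargement of one column), we make $S_1$ attain its norm at some $x=(\lambda e_k,w)$ with $|\lambda|=1$.
\item We take $y^*\in S_{\ell_\infty}$ norming $S_1x$ and set $x^*=(\overline{\lambda}\,y^*\text{-adjusted}\ldots)$. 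Concretely, we need $x^*=(\alpha,0)$ with $\alpha_k=\overline\lambda$ and $|\alpha|\le1$, so that $x^*(x)=1$. The problem is that $y^*$ norms $S_1x$ in the target, whereas $x^*$ must norm $x$ in the domain. These are different sequences, and they have to be linked by the operator.
\item To link them, we compose with a rank-one correction. Replace $S$ by $S+\delta\,(\text{functional }x^*)\otimes z$, where $z=(z_1,0)$ and $z_1$ is aligned with $S_1x$ in $\ell_1$, and then renormalise. A quantity of the form $x^*(Sx)$ with $x^*=(\alpha,0)$ involves $\alpha$ evaluated on an $\ell_1$ vector, so it is not obvious that this is norm-preserving.
\end{itemize}
The cleanest route to making $x^*(Sx)=\|S\|$ is to choose $S$ so that $S_1x$ is a unimodular multiple of $e_k$, mimicking the $\ell_1$-BPBp-nu argument of \cite{KLM14}.

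\emph{Expected main obstacle.} The hard step is Case B: keeping $S$ close to $T$ while forcing the norming vector $S_1x\in\ell_1$ to concentrate on the same coordinate $k$ that supports the $\ell_1$ part of $x$. The $c_0$ component $w$ of $x$ obstructs the $\ell_1$ density arguments, since $c_0$ lacks the RNP. The first thing I would try is to absorb the $c_0$ variable via Case A-style coordinate truncation, and to use complex rotations $e^{i t}$ on the coordinates to align phases. The phase alignment is where complex scalars genuinely help. In the real case only signs $\pm1$ are available, and I expect the argument to break at exactly this point.
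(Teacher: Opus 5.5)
The reduction via $n(X)=1$ is sound, and so is your Case A/Case B split, which is a hands-on substitute for the Pay\'a--Saleh lemma the paper uses to reduce to $\mathcal{L}(X,\ell_1)$ and $\mathcal{L}(X,c_0)$. Case A needs one repair: $\psi\in\ell_1$ attains its norm on $B_{c_0}$ only if it is finitely supported, so you must first truncate $\psi$ (a small change of the $n$-th row) before rescaling.

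The genuine gap is the norm-attainment step in Case B. There you only assert that truncating the $c_0$ variable is an arbitrarily small perturbation of $T_1$, and this is the one nontrivial analytic input of the whole proof. Let $P_n$ be the projection of $X$ onto $\ell_1\oplus_\infty\ell_\infty^n$ and $E_n$ the inclusion. The paper takes $x_0\in S_X$ nearly norming $T\in\mathcal{L}(X,\ell_1)$ whose $c_0$-part is supported in $\{1,\dots,n\}$. Then $E_nP_nx_0+\lambda(\Id_X-E_nP_n)z\in B_X$ for all $z\in B_X$ and $|\lambda|=1$. The $\mathbb{C}$-uniform convexity of $\ell_1$ then forces $\|T\circ(\Id_X-E_nP_n)\|$ to be small. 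You could instead use that every operator $c_0\to\ell_1$ is compact (Pitt's theorem), which gives $\|T\circ(\Id_X-E_nP_n)\|\to0$. After that, $T\circ E_n$ lives on $\ell_1\oplus_\infty\ell_\infty^n$, which has the RNP, and the paper invokes Bourgain. Your column-enlargement trick also works at this stage, because for each $k$ the maximum of $\|S_1(\lambda e_k,v)\|$ over $|\lambda|=1$, $v\in B_{\ell_\infty^n}$ is attained by compactness.

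Conversely, the step you call the main obstacle is not one, and the route you propose for it fails in general. A small perturbation cannot make $S_1x$ a multiple of $e_k$. For $T(a,w)=(\tfrac12a_1(e_1+e_2),0)$, every $S$ with $\|S-T\|<\tfrac14$ is normed only at points $x=(\lambda e_k,w)$ with $k=1$. At each such point $S_1x$ lies at distance more than $\tfrac14$ from $\mathbb{C}e_1$.

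Nor is that step needed. Suppose $\|S\|=\|S_1x\|_1$ with $x=(\lambda e_k,w)$. Put $\alpha_j=\theta\,\overline{\sign((S_1x)_j)}$ for $j$ in the support of $S_1x$, $\alpha_k=\overline{\lambda}$, and $\alpha_j=0$ elsewhere. Choose the unimodular $\theta$ so that the two prescriptions agree at $k$ when $(S_1x)_k\neq0$. Only coordinate $k$ is constrained by $x^*(x)=1$, and the global phase permitted by the modulus in $|x^*(Sx)|$ absorbs the conflict. Hence $(x,(\alpha,0))\in\Pi(X)$ and $|\langle\alpha,S_1x\rangle|=\|S\|=\nu(S)$.

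This is the paper's final step in another form. There, every norm attaining operator on $X$ attains its numerical radius, because for $x_0^*=(u^*,0)$ with $u^*$ unimodular, or $x_0^*=(0,e_j)$, the point $x_0$ lies in the absolute convex hull of the face $\{x\in B_X\colon x_0^*(x)=1\}$. In particular, $\theta=\pm1$ works for real scalars, so this step does not break in the real case as you expected. In the paper, complex scalars enter only through the $\mathbb{C}$-uniform convexity of $\ell_1$ used for the truncation.
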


\begin{proof} Let $P_{\ell_1}$ and $P_{c_0}$ be the canonical projections from $X$ to $\ell_1$ and from $X$ to $c_0$, respectively. For $k\in \mathbb{N}$, we also denote by $P_k$ and $E_k$ the canonical projection from $X$ to $\ell_1\oplus_\infty \ell_\infty^k$ and canonical injection from $\ell_1\oplus_\infty \ell_\infty^k$ to $X$ where $\ell_\infty^k$ is the $k$-dimensional space $\mathbb{C}^k$ with the supremum norm.

We first observe that the set of norm attaining operators is dense in $\mathcal{L}(X)$. Due to \cite[Lemma 2]{PayaSaleh} which states that the density of norm attaining operators from an arbitrary $\ell_1$-sum of Banach spaces into an arbitrary $\ell_\infty$-sum of Banach spaces is followed by that of those operators between each components, it is enough to show that the respective sets of norm attaining operators are dense in $\mathcal{L}(X,\ell_1)$ and $\mathcal{L}(X,c_0)$. In case of $\mathcal{L}(X,c_0)$, it is true since $c_0$ has property $(\beta)$ of Lindenstrauss (see \cite{Lindenstrauss63}). 

In order to prove the case of $\mathcal{L}(X,\ell_1)$, we recall that $\ell_1$ is $\C$-uniformly convex (see \cite{Globevnik75}). Indeed, the modulus of $\C$-convexity 
$$
\delta_\C(t) := \inf_{x,y \in S_{\ell_1}} \left\{ \sup_{|\lambda| = 1} \|x + \lambda t y\|-1 \right\}
$$
is strictly positive whenever $t>0$. 

For given $\eps>0$ and $T\in \mathcal{L}(X,\ell_1)$, we shall find a norm attaining operator $S\in \mathcal{L}(X,\ell_1)$ such that $\|T-S\|<\eps$. Without loss of generality, we assume that $\|T\|=1$, and take $x_0\in S_X$ such that 
$$\|Tx_0\|>1- \frac{\delta_\C(\eps/2)}{1+\delta_\C(\eps/2)}.$$

 From the density of vectors with finite supports in $c_0$, we may assume that the support of $P_{c_0}x_0$ is a subset of $\{1,2,3,\ldots,n\}$ for large $n\in \mathbb{N}$. Hence, we have $\|T\circ E_n\circ P_n x_0 \| >1- \frac{\delta_\C(\eps/2)}{1+\delta_\C(\eps/2)}$. Since
$$\|T\circ E_n\circ P_n x_0 + \lambda T\circ (\Id_X-E_n\circ P_n)z\|\leq 1$$
for every $z\in B_X$ and $\lambda\in \mathbb{C}$ of modulus $1$, we see that 
$$\left\|\frac{T\circ E_n\circ P_n x_0}{\|T\circ E_n\circ P_n x_0\|} + \lambda \frac{T\circ (\Id_X-E_n\circ P_n)z}{\|T\circ E_n\circ P_n x_0\|}\right\|\leq \frac{1}{\|T\circ E_n\circ P_n x_0\|}<1+\delta_\C\left(\frac{\eps}{2}\right).$$
Therefore, we have
$$\left\|\frac{T\circ (\Id_X-E_n\circ P_n)z}{\|T\circ E_n\circ P_n x_0\|}\right\|\leq \frac{\eps}{2}$$ from the definition of $\delta_\C$ and the convexity of the norm. Therefore, we get $$\|T-T\circ E_n\circ P_n\|=\|T\circ (\Id_X-E_n\circ P_n)\|\leq \frac{\eps}{2}.$$

Since $T\circ E_n$ is defined on $\ell_1\oplus_\infty \ell^n_\infty$ and $\ell_1\oplus_\infty \ell^n_\infty$ has the RNP, there exists a norm attaining operator $R\in \mathcal{L}(\ell_1\oplus_\infty \ell^n_\infty,\ell_1)$ such that $\|T\circ E_n-R\|<\frac{\eps}{2}$ (see \cite{Bourgain77}). It is clear that $S=R\circ P_n$ is the desired operator.

Finally, we observe that the set of numerical radius attaining operators is dense in $\mathcal{L}(X)$ by showing that every norm attaining operator in $\mathcal{L}(X)$ actually attains its numerical radius. Note that $n(X)=1$ (see \cite{MP00}) which means that the norm of the operator coincides with its numerical radius. Let $T\in \mathcal{L}(X)$ attain its norm at $x_0 \in S_X$, and take $x_0^*\in S_{X^*}$ such that $|x_0^*(Tx_0)|=\|T\|~(=\nu(T))$. 
Since $P_{\ell_1}x_0\in B_{\ell_1}$, by the convexity argument we may assume that $P_{\ell_1}x_0=e_{i}$ for some ${i}\in \mathbb{N}$ where $(e_k)_{k\in \mathbb{N}}$ is the canonical basis of $\ell_1$. Moreover, since $x_0^*\in S_{\ell_\infty\oplus_1 \ell_1}$ and $Tx_0\in \|T\|S_{\ell_1\oplus_\infty c_0}$, we may assume that $x_0^*$ is of the form $x_0^*=(u^*,0)$ for some extreme point of $B_{\ell_\infty}$ or $x_0^*=(0,e_j)$ for some $j\in \mathbb{N}$. In both cases, it holds that
$$x_0\in \aconv \{x\in B_{X}\colon x_0^*(x)=1\}$$
where $\aconv A$ is the absolute convex hull of a set $A$. Therefore, there exists $x_1\in B_X$ such that \[x_0^*(x_1)=1\quad \text{~and~} \quad |x_0^*(Tx_1)|=\|T\|.\qedhere\]
\end{proof}

We rewrite \cite[Proposition 3.5]{CKLM15} with our new notion.

\begin{theorem}[{\cite[Proposition 3.5]{CKLM15}}] \label{thm:c0newAHSp} For a Banach space $X$, the pair $(X,c_0)$ has the BPBZp if and only if $X^*$ has $X$-AHSp.
\end{theorem}

By the preceding theorem (or \cite[Proposition 3.5]{CKLM15}),  $(\ell_1,c_0)$ does not have BPBZp since $\ell_\infty$ does not have the $\ell_1$-AHSp (see \cite[Proposition 4.8]{ABGM13}). 

We now focus on pairs of the form $(X,\ell_\infty)$. We first get a necessary condition for $(X,\ell_\infty)$ to have the BPBZp.

\begin{theorem}\label{thm:inftyc0}
Let $X$ be a Banach space. If the pair $(X,\ell_\infty)$ has the BPBZp with a function $\eta$, then the pair $(X,c_0)$ has the BPBZp with the function $\eps\mapsto\eta\left(\frac{\eps}{2}\right)$ for $0<\eps<1$.
\end{theorem}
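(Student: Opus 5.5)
The plan is to view $c_0$ inside $\ell_\infty$, apply the BPBZp of $(X,\ell_\infty)$ with $\eps/2$, and then push the resulting operator and functional back into $c_0$ and $\ell_1 = c_0^*$. Let $J\colon c_0\to\ell_\infty$ be the canonical inclusion.

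\textbf{Step 1 (apply the hypothesis).} Take $T\in\mathcal L(X,c_0)$ with $\|T\|=1$, $y_0^*\in S_{\ell_1}$ and $x_0\in S_X$ with $|y_0^*(Tx_0)|>1-\eta(\eps/2)$. Regard $y_0^*$ as an element of $S_{\ell_\infty^*}$ through the canonical embedding $\ell_1\subset\ell_\infty^*$. Then $JT$ and $y_0^*$ satisfy the hypothesis of the BPBZp for $(X,\ell_\infty)$. This yields $S\in\mathcal L(X,\ell_\infty)$, $y_1^*\in S_{\ell_\infty^*}$ and $x_1\in S_X$ with
\[
\|S\|=|y_1^*(Sx_1)|=1,\quad \|x_1-x_0\|<\eps/2,\quad \|y_1^*-y_0^*\|<\eps/2,\quad \|S-JT\|<\eps/2 .
\]

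\textbf{Step 2 (isolate the $\ell_1$-part of $y_1^*$).} Use the $\ell_1$-decomposition $\ell_\infty^*=\ell_1\oplus_1 c_0^\perp$ and write $y_1^*=a+s$ with $a\in\ell_1$ and $s\in c_0^\perp$. Since $y_0^*\in\ell_1$, we get $\|a-y_0^*\|+\|s\|=\|y_1^*-y_0^*\|<\eps/2$, and in particular $\|a\|>1-\eps/2>0$. From
\[
1=|y_1^*(Sx_1)|\le |a(Sx_1)|+\|s\|\le\|a\|+\|s\|=1
\]
we obtain $|a(Sx_1)|=\|a\|$. Since $\|Sx_1\|_\infty\le1$, this forces a unimodular $\theta$ with $(Sx_1)_n=\theta\,\overline{\operatorname{sign} a_n}$, so $|(Sx_1)_n|=1$ for every $n\in\operatorname{supp}(a)$.

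\textbf{Step 3 (return to $c_0$ by a finite modification).} This is the step I expect to need the most care: $S$ need not map into $c_0$, even on $\operatorname{supp}(a)$, which may be infinite. Fix $\delta>0$ small and choose a finite nonempty set $F\subset\operatorname{supp}(a)$ with $\|a-a\chi_F\|<\delta$. Define $S'\in\mathcal L(X,c_0)$ by
\[
(S'x)_n=(Sx)_n \text{ for } n\in F,\qquad (S'x)_n=(Tx)_n \text{ for } n\notin F .
\]
The operator $S'$ maps into $c_0$ because it is a finite-coordinate modification of $T$. Each coordinate of $S'x$ is a coordinate of $Sx$ or of $Tx$, so $\|S'\|\le1$ and $\|S'-T\|\le\|S-JT\|<\eps/2$. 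Put $z^*=a\chi_F/\|a\chi_F\|\in S_{\ell_1}$. By Step 2, $|z^*(S'x_1)|=1$, hence $\|S'\|=1=|z^*(S'x_1)|$.

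\textbf{Step 4 (estimate).} We have
\[
\|z^*-y_0^*\|\le\|z^*-a\chi_F\|+\|a\chi_F-a\|+\|a-y_0^*\|\le (1-\|a\chi_F\|)+\delta+\|a-y_0^*\|.
\]
Since $\|a\chi_F\|>\|a\|-\delta$ and $\|a\|\ge 1-\|s\|$, the right-hand side is at most $\|s\|+\|a-y_0^*\|+2\delta<\eps/2+2\delta<\eps$ for small $\delta$. Together with $\|x_1-x_0\|<\eps/2$ and $\|S'-T\|<\eps/2$, this gives the BPBZp for $(X,c_0)$ with the function $\eps\mapsto\eta(\eps/2)$.
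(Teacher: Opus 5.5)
Your proof is correct, but it follows a different route from the paper's. The paper truncates the operator \emph{before} invoking the hypothesis. It picks $n_0$ with $\|P_{n_0}\circ T\|>1-\eps/2$ and $|y_0^*(P_{n_0}\circ Tx_0)|>1-\eta(\eps/2)$, and applies the BPBZp of $(X,\ell_\infty)$ to $P_{n_0}\circ T/\|P_{n_0}\circ T\|$. The resulting $Q$ then satisfies $\|(\Id-P_{n_0})\circ Q\|<\eps<1$. The elementary splitting $\ell_\infty^*=(\ell_\infty^{n_0})^*\oplus_1 \ell_\infty(\N\setminus\{1,\dots,n_0\})^*$ forces the tail of $y_1^*$ to vanish, so $y_1^*\in\ell_1^{n_0}$. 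It is used unchanged, with $S=P_{n_0}\circ Q+(\Id-P_{n_0})\circ T$.

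You instead apply the hypothesis to $JT$ itself. You use the L-decomposition $\ell_\infty^*=\ell_1\oplus_1 c_0^\perp$ to see that the $\ell_1$-part $a$ of $y_1^*$ already attains its norm at $Sx_1$. You then truncate and renormalize the \emph{functional} and modify $T$ only on the finite support $F$.

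The error budget is therefore distributed differently:
\begin{itemize}
\item the paper gets $\|y_1^*-y_0^*\|<\eps/2$ but only $\|S-T\|<\eps$, because normalizing $P_{n_0}\circ T$ costs $\eps/2$;
\item you get $\|S'-T\|<\eps/2$ but only $\|z^*-y_0^*\|<\eps/2+2\delta$.
\end{itemize}
Your argument relies on the standard but infinite-dimensional fact that $\ell_1$ is an L-summand in $\ell_\infty^*$. The paper needs only the trivial finite $\ell_\infty$-splitting of $\ell_\infty$. In exchange, you avoid choosing $n_0$ in advance and normalizing the truncated operator.
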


\begin{proof}
For $n\in \mathbb{N}$, let $P_{n}\colon \ell _{\infty }\rightarrow \ell _{\infty }^{n}$ be the canonical projection onto the first $n$ coordinates, where we identify $\ell _{\infty }^{n}$ as a subspace of $c_{0}$ (and thus of $\ell _{\infty }$). To simplify notation, we shall also denote the analogous projection from $c_{0}$ to $\ell _{\infty }^{n}$ by $P_{n}$.

For given $0<\eps<1$, assume that $T\in \mathcal{L}(X,c_0)$ with $\|T\|=1$ and $(x_0,y^*_0)\in S_X\times S_{\ell_1}$ satisfy 
$$|y^*_0(Tx_0)|>1-\eta\left(\frac{\eps}{2}\right).$$
Take $n_0\in \mathbb{N}$ such that 
$$\left\|P_{n_0}\circ T\right\|>1-\frac{\varepsilon}{2}~{and}~\left|y^*_0\left(P_{n_0}\circ Tx_0\right)\right|>1-\eta\left(\frac{\eps}{2}\right).$$
From the assumption, we have $(x_1,y_1^*)\in S_X\times S_{\ell_\infty^*}$ and $Q\in \mathcal{L}(X,\ell_\infty)$ such that 
$$\|Q\|=|y_1^*(Qx_1)|=1,~\|x_1-x_0\|<\frac{\varepsilon}{2},~\|y_1^*-y^*_0\|<\frac{\varepsilon}{2}\quad \text{and} \quad \left\|Q-\frac{P_{n_0}\circ T}{\|P_{n_0}\circ T\|}\right\|<\frac{\varepsilon}{2}.$$
Since it holds that
$$\left\|Q-P_{n_0}\circ T\right\|\leq \left\|Q-\frac{P_{n_0}\circ T}{\|P_{n_0}\circ T\|}\right\|+\left\|\frac{P_{n_0}\circ T}{\|P_{n_0}\circ T\|}-P_{n_0}\circ T\right\|<\eps.$$
we have
\begin{align*}
1
&=|y_1^*(Qx_1)|\leq\left|y_1^*\left(P_{n_0}\circ Qx_1\right)\right|+\left|y_1^*\left((\Id_{\ell_\infty}- P_{n_0})\circ Qx_1\right)\right|\\
&\leq \|P_{n_0}^*y_1^*\|+\left|y_1^*\left((\Id_{\ell_\infty}- P_{n_0})\circ (Q-P_{n_0}\circ T) x_1\right)\right|\\
&\leq \|P_{n_0}^*y_1^*\|+\left\|(\Id_{\ell_\infty}- P_{n_0})^*y_1^*\right\|\eps\\
&\leq \|P_{n_0}^*y_1^*\|+\left\|(\Id_{\ell_\infty}- P_{n_0})^*y_1^*\right\|=1.
\end{align*}
This implies $P_{n_0}^*y_1^*=y_1^*$ which means $y_1^*\in \ell_1$. Hence, for the operator 
$$S=P_{n_0}\circ Q + (\Id_{c_0}- P_{n_0})\circ T,$$ we deduce 
\begin{align*}
&|y_1^*(Sx_1)|=|y_1^*(P_{n_0}\circ Sx_1)|=|y_1^*(P_{n_0}\circ Qx_1)|=|y_1^*(Qx_1)|=1 \quad\text{and}\\
&\|S-T\|= \|P_{n_0}\circ Q-P_{n_0}\circ T\|\leq \|Q-P_{n_0}\circ T\|<\eps.
\end{align*}
which finishes the proof.
\end{proof}

\begin{corollary}
Let $X$ be  a Banach space. 
\begin{enumerate}
\item If the pair $(X,\ell_\infty)$ has the BPBZp, then $X^*$ has the $X$-AHSp. 
\item The pair  $(\ell_1,\ell_\infty)$ does not have the BPBZp.
\end{enumerate}
\end{corollary}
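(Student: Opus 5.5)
Both items follow by chaining Theorem \ref{thm:inftyc0} with Theorem \ref{thm:c0newAHSp}; no new estimates are needed.

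For (1), suppose $(X,\ell_\infty)$ has the BPBZp with some function $\eta$. Theorem \ref{thm:inftyc0} then gives that $(X,c_0)$ has the BPBZp, with the function $\eps\mapsto\eta(\eps/2)$. Theorem \ref{thm:c0newAHSp}, i.e.\ \cite[Proposition 3.5]{CKLM15}, characterizes the BPBZp for $(X,c_0)$ exactly as the $X$-AHSp of $X^*$. Applying its forward direction finishes (1).

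For (2), we argue by contradiction and take $X=\ell_1$ in (1). If $(\ell_1,\ell_\infty)$ had the BPBZp, then $\ell_1^*=\ell_\infty$ would have the $\ell_1$-AHSp. By Remark \ref{remAHSp}, this is the same as saying that $\ell_1$ has the AHSp for the pair $(\ell_1,\ell_\infty)$ in the sense of Definition \ref{AHSppair}. That property is known to fail by \cite[Proposition 4.8]{ABGM13}, as already noted right after Theorem \ref{thm:c0newAHSp}. Equivalently, the implication $(\ell_1,\ell_\infty)$ BPBZp $\Rightarrow$ $(\ell_1,c_0)$ BPBZp from Theorem \ref{thm:inftyc0} contradicts the failure of the BPBZp for $(\ell_1,c_0)$ observed in the paper.

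The only point needing care is the identification of the various AHSp-type notions, in particular that "$X^*$ has the $X$-AHSp" matches the hypothesis of \cite[Proposition 3.5]{CKLM15} as restated in Theorem \ref{thm:c0newAHSp}. Since the paper restates that proposition precisely in this language, I do not expect a genuine obstacle.
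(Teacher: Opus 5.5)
Your proposal is correct and follows the paper's own argument: (1) is Theorem~\ref{thm:inftyc0} followed by the forward direction of Theorem~\ref{thm:c0newAHSp}, and (2) is the case $X=\ell_1$ combined with \cite[Proposition 4.8]{ABGM13}. The paper also notes that (2) follows from Corollary~\ref{l1bpbpnu}, since $\ell_\infty$ is isometrically isomorphic to $C(\beta\N)$ and $\beta\N$ is infinite.
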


\begin{proof} These are direct consequences of Theorems \ref{thm:c0newAHSp} and \ref{thm:inftyc0}, and \cite[Proposition 4.8]{ABGM13}. Note that (2)  can also be obtained using Corollary \ref{l1bpbpnu}.
\end{proof}

We now get a sufficient condition for the pair $(X,\ell_\infty)$ to have the BPBZp. In \cite[Corollary 3.4]{ABGM13}, the following property on a Banach space $X$ is shown to be sufficient to get that $X^*$ has the $X$-AHSp: for every $\eps > 0$ there is $\eta(\eps) >0$ such that for every $x_0\in S_X$ there exists $x_1 \in S_X$ satisfying
\begin{enumerate}
\item[(i)] $\|x_1-x_0\| <\eps$,
\item[(ii)] if $x_0^*\in S_{X^*}$ satisfies $\re x_0^*(x_0)>1-\eta(\eps)$, then there exists $x_1^*\in S_{X^*}$ such that 
$$\|x_1^*-x_0^*\| < \eps ~\text{and}~x_1^*(x_1)=1.$$
\end{enumerate}

We say that $X^*$ has the \textit{$X$-AHp} if the aforementioned property holds. It was shown in \cite{ABGM13} that uniformly smooth spaces, finite-dimensional spaces, and the space $K(H)$ of compact operators on a Hilbert space $H$ all satisfy this property (see the proofs of \cite[Propositions 4.1, 4.2, 4.7]{ABGM13}). We now show that if a Banach space $X$ has this property, then the pair $(X, \ell_\infty)$ satisfies the BPBZp. 

\begin{theorem}\label{thm:AHpimpliesBPBZp}
Let $X$ be a Banach space. If $X^*$ has the $X$-AHp, then the pair $(X,\ell_\infty)$ has the BPBZp.
\end{theorem}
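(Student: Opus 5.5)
The plan is to pass to a concrete model of the data. We write $T\in\mathcal L(X,\ell_\infty)$ as $Tx=(x_n^*(x))_{n\in\N}$ with $x_n^*\in B_{X^*}$ and $\sup_n\|x_n^*\|=1$. We identify $\ell_\infty=C(\beta\N)$, so that $y_0^*\in S_{\ell_\infty^*}$ becomes a regular Borel measure $\mu$ on $\beta\N$ with $|\mu|(\beta\N)=1$. Let $h=d\mu/d|\mu|$, so $|h|=1$.

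Fix $\eps$ and let $\eta_1$ be the function of the $X$-AHp. Pick a small auxiliary $\eps'$ (a fixed fraction of $\eps$) and threshold $\eta'=\eta_1(\eps')$. Then choose $\eta(\eps)$ much smaller than $\eta'$, say $\eta\le \eps'\eta'$ and $\eta\le \eps'^2$. Suppose $|y_0^*(Tx_0)|>1-\eta$, and choose $\theta$ with $|\theta|=1$ and $\theta y_0^*(Tx_0)=|y_0^*(Tx_0)|$. Apply the $X$-AHp to $x_0$ (with parameter $\eps'$) to obtain $x_1\in S_X$ with $\|x_1-x_0\|<\eps'$. Set
\[
A=\{n\in\N\colon \re \theta x_n^*(x_0)>1-\eta'\}.
\]
For each $n\in A$, the $X$-AHp gives $z_n^*\in S_{X^*}$ with $z_n^*(x_1)=1$ and $\|z_n^*-\theta x_n^*\|<\eps'$. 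Define $S x=(s_n^*(x))_n$, where $s_n^*=\overline\theta z_n^*$ for $n\in A$ and $s_n^*=x_n^*$ for $n\notin A$. Then $\|S\|\le 1$ and $\|S-T\|\le\eps'$. Moreover $Sx_1$ equals $\overline\theta$ on $A$, hence on the closure $\overline A\subset\beta\N$ by continuity. Once $A\neq\emptyset$ this also gives $\|S\|=1$.

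The main step, which I expect to be the real obstacle, is constructing $y_1^*$ from $\mu$. Let $f=Tx_0\in C(\beta\N)$, so $|f|\le1$. The hypothesis reads $\re\int\theta f\,d\mu>1-\eta$, that is,
\[
\int (1-\re(\theta f h))\,d|\mu|<\eta .
\]
From this we extract two estimates.

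First, the set $\beta\N\setminus\overline A$ is contained in $\{\re\theta f\le 1-\eta'\}$. On that set $1-\re(\theta fh)\ge\eta'$, so Chebyshev gives $|\mu|(\beta\N\setminus\overline A)\le\eta/\eta'\le\eps'$. In particular $A\ne\emptyset$.

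Second, we use $|1-w|^2\le 2(1-\re w)$ for $|w|\le1$ together with Cauchy–Schwarz to get $\int|1-\theta fh|\,d|\mu|\le\sqrt{2\eta}$. On $\overline A$ we have $|1-\theta f|\le\sqrt{2\eta'}$, which yields $\int_{\overline A}|1-h|\,d|\mu|\lesssim\sqrt{2\eta}+\sqrt{2\eta'}$.

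We then define $y_1^*$ as the measure
\[
\theta\,\frac{|\mu|\restriction_{\overline A}}{|\mu|(\overline A)}.
\]
It has norm $1$ and gives $y_1^*(Sx_1)=\theta\cdot\overline\theta=1$. Since $\|S\|=1$, this yields $\|S\|=|y_1^*(Sx_1)|=1$. The distance $\|y_1^*-y_0^*\|$ is controlled by three terms: the mass outside $\overline A$, the normalization $1-|\mu|(\overline A)$, and $\int_{\overline A}|1-h|\,d|\mu|$.

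Here the $\theta$ bookkeeping needs care. The estimates above naturally show that $\mu$ is close to $|\mu|\restriction_{\overline A}$, not to $\theta|\mu|\restriction_{\overline A}$. If they do not line up as written, I would instead take $y_1^*=|\mu|\restriction_{\overline A}/|\mu|(\overline A)$ and only require $|y_1^*(Sx_1)|=1$, which is all the definition of the BPBZp asks for. Tuning $\eps'$ and $\eta'$ as small multiples of $\eps$ (for instance $\eta'$ of order $\eps^2$) then makes all distances less than $\eps$, which completes the verification of the BPBZp.
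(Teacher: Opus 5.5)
There is a genuine gap in your choice of $A$. You take $A=\{n\colon \re\theta x_n^*(x_0)>1-\eta'\}$ with a single global phase $\theta$, and then assert that $1-\re(\theta fh)\geq\eta'$ on $\beta\N\setminus\overline A$. On that set you only know $\re(\theta f)\leq 1-\eta'$. But $|f|$ may still equal $1$ there, and the phase $h$ of $\mu$ can cancel the phase of $\theta f$.

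For instance, let $X=\K$, $Tx=(-x,0,0,\dots)$, $x_0=1$ and $y_0^*=-e_1\in\ell_1\subset\ell_\infty^*$. Then $y_0^*(Tx_0)=1$, so $\theta=1$, but $\re\theta x_n^*(x_0)\in\{-1,0\}$ for all $n$. Hence $A=\emptyset$, the Chebyshev bound fails ($|\mu|(\beta\N\setminus\overline A)=1$), and $y_1^*$ is undefined. With $Tx=(-x,-x,\dots)$ and $y_0^*=-\lim_{\mathcal U}$ the same happens with all the mass of $\mu$ in $\beta\N\setminus\N$, so you cannot rescue this by reading off $h$ at atoms.

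The hypothesis $\re\int\theta fh\,d|\mu|>1-\eta$ only tells you that $|f|\approx 1$ and $h\approx\overline{\theta f}$ on most of $|\mu|$. It does not say that $\theta f\approx 1$. Your unease about the ``$\theta$ bookkeeping'' is a symptom of this: one scalar cannot carry a phase that varies from coordinate to coordinate.

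The missing idea is to select $A$ by the modulus of $Tx_0$ and to rotate each coordinate by its own phase. This is what the paper does. It first applies the Bishop-Phelps-Bollob\'as theorem in $\ell_\infty$ to get $y_1^*(y_1)=1$ with $y_1\approx Tx_0$ and $y_1^*\approx y_0^*$. It then takes $A=\{i\colon |y_1(i)|>1-\delta\}$ and shows $P_A^*y_1^*=y_1^*$. Finally it applies the $X$-AHp to $\overline{y_2(i)}T_i$, where $y_2(i)=y_1(i)/|y_1(i)|$, and sets $S_i=y_2(i)z_i^*$.

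Your measure-theoretic route can be repaired along the same lines, which gives a valid alternative:
\begin{itemize}
\item Put $A=\{n\colon |x_n^*(x_0)|>1-\eta'\}$ and $\sigma_n=x_n^*(x_0)/|x_n^*(x_0)|$.
\item Apply the $X$-AHp to $\overline{\sigma_n}x_n^*/\|x_n^*\|$. Some normalization is needed anyway, since the $X$-AHp is stated for functionals in $S_{X^*}$ and $\theta x_n^*$ need not be one.
\item Let $s_n^*=\sigma_nz_n^*$ on $A$, let $g=Sx_1\in C(\beta\N)$, and set $y_1^*=\overline{\theta}\,\overline{g}\,|\mu|\restriction_{\overline A}/|\mu|(\overline A)$.
\end{itemize}
Then off $\overline A$ you really have $1-\re(\theta fh)\geq 1-|f|\geq\eta'$, so Chebyshev works. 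On $\overline A$ you have $|g|=1$, $|g-f|\leq\eta'$ and $y_1^*(Sx_1)=\overline\theta$. Moreover $|\overline{\theta g}-h|=|1-\theta gh|\leq|1-\theta fh|+|g-f|$ there, so your Cauchy-Schwarz estimate controls $\|y_1^*-y_0^*\|$.
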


\begin{proof} 
Before we provide the proof, we remark that condition $(ii)$ of the $X$-AHp can be strengthened by taking $x_0^* \in B_{X^*}$, provided that the function $\eta$ is suitably relaxed.  We assume that $X^*$ has $X$-AHp witnessed by this function $\eta$ instead of the original one.

We use the canonical identification between $\mathcal{L}(X,\ell_\infty)$ and $\left[\bigoplus\nolimits_{i\in \N} X^*\right]_\infty$, the direct sum of countably many copies of $X^*$ endowed with the supremum norm. Under this identification, we denote $R\in \mathcal{L}(X,\ell_\infty)$ by $(R_i)\in \left[\bigoplus\nolimits_{i\in \N} X^*\right]_\infty$. Furthermore, for a subset $\Omega \subset \mathbb{N}$, $P_\Omega\colon \ell_\infty\rightarrow \ell_\infty(\Omega) \subset \ell_\infty$ denotes the canonical projection onto the coordinates indexed by $\Omega$.

 For given $0<\eps<1$, take $\delta(\eps)>0$ such that 
$$2\delta(\eps)<\min\left\{\eta\left(\eps\right),\eps\right\}.$$

Let $T\in S_{\mathcal{L}(X,\ell_\infty)}$, $y_0^*\in S_{\ell_\infty^*}$ and $x_0 \in S_X$ satisfy 
$$|y_0^*(Tx_0)|>1-\frac{\delta(\eps)^2}{2},$$
and, without loss of generality, we assume that $|y_0^*(Tx_0)|=y_0^*(Tx_0)$ by multiplying a suitable constant of modulus $1$ to $T$. 

By Bishop-Phelps-Bollob\'as theorem, we take $y_1^*\in S_{\ell_\infty^*}$ and $y_1\in S_{\ell_\infty}$ such that 
$$y_1^*(y_1)=1,~\|y_1^*-y_0^*\|<\delta(\eps)~\text{and}~\left\|y_1-Tx_0\right\|<\delta(\eps).$$

Set $A=\{i\in \mathbb{N}\colon |y_1(i)|>1-\delta(\eps)\}$ and define $y_2\in S_{\ell_\infty}$ by 
$$y_2(i)=\frac{y_1(i)}{|y_1(i)|}~\text{for}~i\in A~\text{and}~y_2(i)=y_1(i)~\text{for}~i\in A^c.$$

From the convexity of the norm, it is easy to see that $y_1^*(y_2)=1$. Moreover, we have that 
\begin{align*}
1
&=y_1^*(P_Ay_2)+y_1^*((\Id_{\ell_\infty}-P_A)y_2)=y_1^*(P_Ay_2)+y_1^*((\Id_{\ell_\infty}-P_A)^2y_2)\\
&\leq \|P_A^*y_1^*\|+\|(\Id_{\ell_\infty}-P_A)^*y_1^*\|(1-\delta(\eps))\leq \|P_A^*y_1^*\|+\|(\Id_{\ell_\infty}-P_A)^*y_1^*\|=1.
\end{align*}

Therefore, we get 
$$P_A^*y_1^*=y_1^* \text{~and~}y_1^*(P_Ay_2)=1.$$

 Since $\left\|y_2-Tx_0\right\|<2\delta(\eps)$, we have 
$$\left|y_2(i)-T_ix_0\right|<2\delta(\eps)<\eta\left(\eps\right) \text{~for~}i\in A,$$
and this implies
$$\re \overline{y_2(i)}T_ix_0>1-\eta\left(\eps\right) \text{~for~} i\in A.$$

We apply the definition of $X$-AHp to find $x_1\in S_{X}$ and $z_i^*\in S_{X^*}$ for each $i\in A$ such that 
$$\|x_1-x_0\|<\eps,~\left\|z_i^*-\overline{y_2(i)}T_i\right\|<\eps \text{~and~}z_i^*(x_1)=1.$$

We now construct an operator $S \in \mathcal{L}(X,\ell_\infty)$ by 
$$S_i=y_2(i)z_i^* ~\text{for}~i\in A,~\text{and}~S_i=T_i ~\text{for}~i\in A^c.$$

Since $P_A\circ S x_1=P_A y_2$ and $P_A^*y_1^*=y_1^*$, it is immediate that $1=y_1^*(P_A y_2)=y_1^*(S x_1).$

Finally, we get
\begin{align*}
\|S-T\|
&=\sup_{i\in A} \|S_i-T_i\|=\sup_{i\in A} \|y_2(i)z_i^*-T_i\|<\eps.
\end{align*}
which finishes the proof.
\end{proof}
We now present a family of examples satisfying this property.

\begin{proposition}\label{C0AHP} 
For a locally compact Hausdorff space $L$, $C_0(L)^*$ has the $C_0(L)$-AHp. 
\end{proposition}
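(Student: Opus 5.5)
The plan is to verify the $C_0(L)$-AHp directly. Given $x_0=f_0\in S_{C_0(L)}$, I will push $f_0$ radially to modulus one wherever $|f_0|$ is close to $1$, and I will not touch it near $0$. Given a measure $\mu$ that almost norms $f_0$, I will cut $\mu$ down to the set where it "sees" $f_0$ almost perfectly and re-phase it against the new function. The key point is that $f_1$ depends only on $f_0$ and not on $\mu$, exactly as the $X$-AHp requires. As usual I identify $C_0(L)^*=M(L)$ via the Riesz representation theorem, as in Example \ref{CLexample}.

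\emph{Step 1 (construction of $f_1$).} Fix $0<\eps<1$ and choose a continuous nondecreasing $g\colon[0,1]\to[0,1]$ with the following properties: $g(s)=s$ for $s\leq 1-\eps$, $g(s)=1$ for $s\geq 1-\eps/2$, and $|g(s)-s|\leq\eps/2$ throughout (for instance, take $g$ piecewise linear). Define $f_1(t)=g(|f_0(t)|)\,f_0(t)/|f_0(t)|$ when $f_0(t)\neq0$ and $f_1(t)=0$ otherwise. Then $f_1$ is continuous, because it agrees with $f_0$ on the open set $\{|f_0|<1-\eps\}$ and is a continuous function of $f_0$ where $f_0\neq 0$. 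It vanishes at infinity for the same reason. Moreover $\|f_1-f_0\|\leq\eps/2$, and $|f_1|=1$ on $U=\{t\colon|f_0(t)|\geq1-\eps/2\}$, which is nonempty since $\|f_0\|=1$. Hence $f_1\in S_{C_0(L)}$ and condition (i) holds.

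\emph{Step 2 (correcting the measure).} Let $\mu\in S_{M(L)}$ satisfy $\re\mu(f_0)>1-\eta$, where $\eta$ will be fixed below. Write $d\mu=h\,d|\mu|$ with $|h|=1$, so that $\int\bigl(1-\re(hf_0)\bigr)\,d|\mu|<\eta$ and the integrand is nonnegative. Take $\delta>0$ with $\delta\leq\eps/2$ and let $B=\{t\colon\re(h(t)f_0(t))>1-\delta\}$. By Markov's inequality, $|\mu|(L\setminus B)<\eta/\delta$. On $B$ we have $|f_0|>1-\delta$, so $B\subset U$ and $hf_1=hf_0/|f_0|$ there. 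An elementary estimate then gives that $hf_1$ is a unimodular number with real part $>1-\delta$, hence $|hf_1-1|<\sqrt{2\delta}$ on $B$. Define
$$d\nu=\frac{1}{|\mu|(B)}\,\overline{f_1}\,\chi_B\,d|\mu|.$$
This is a regular Borel measure, since it is absolutely continuous with respect to $|\mu|$. It satisfies $\|\nu\|=1$ because $|f_1|=1$ on $B$, and $\nu(f_1)=\frac{1}{|\mu|(B)}\int_B|f_1|^2\,d|\mu|=1$.

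\emph{Step 3 (estimate and choice of constants).} We have
$$\|\nu-\mu\|\leq\int_B\bigl|\overline{f_1}-h\bigr|\,d|\mu|+\Bigl(\tfrac{1}{|\mu|(B)}-1\Bigr)+|\mu|(L\setminus B)\leq\sqrt{2\delta}+\frac{2\eta/\delta}{1-\eta/\delta},$$
using $|\overline{f_1}-h|=|1-hf_1|$ on $B$. Choosing $\delta=\min\{\eps/2,\eps^2/8\}$ and $\eta=\delta\eps/8$ makes this less than $\eps$, which gives condition (ii).

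I expect the only delicate point to be Step 1. A naive normalization $f_0/|f_0|$ on a set where $|f_0|$ is large would destroy continuity, and it would also make $f_1$ depend on $\mu$. The interpolating function $g$ resolves both issues. The measure-theoretic estimates in Steps 2 and 3 are routine.
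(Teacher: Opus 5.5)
Your proof is correct and follows essentially the same route as the paper. In both, $f_1$ is obtained by pushing $f_0$ radially to modulus one near its peak set: your piecewise linear $g(|f_0|)$ plays the role of the paper's Urysohn cutoff $\phi$ in $f_1=\frac{f_0}{|f_0|}\phi+(1-\phi)f_0$. In both, $\nu$ is the normalized restriction of $|\mu|$ to the set where $\re(h f_0)$ is close to $1$, re-phased by $\overline{f_0}/|f_0|$, with the complement controlled by the same Markov-type estimate. The only differences are that your explicit radial cutoff avoids Urysohn's lemma and that the constants are arranged differently.
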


\begin{proof}
For given $0<\eps<1$,  take $\gamma>0$ such that $\sqrt{2\gamma}+2\gamma<\eps$. We shall prove that $\eta(\eps)=\gamma^2$ is the desired function in the definition of $C_0(L)$-AHp.

For given $f_0\in S_{C_0(L)}$, set
\begin{align*}
V=\left\{t\in L\colon |f_0(t)|\geq 1-\frac{\eps}{4}\right\}\quad\text{and}\quad U=\left\{t\in L\colon |f_0(t)|> 1-\frac{\eps}{2}\right\}.
\end{align*}

Using Urysohn's lemma, take $\phi\in S_{C_0(L)}$ such that 
$$0\leq \phi\leq 1,~\phi(t)=1 ~\text{for}~t\in V,~\phi(t)=0~\text{for}~t\in U^c,$$
and define 
\begin{align*}
\psi(t)&=\frac{f_0(t)}{|f_0(t)|}\phi(t)~\text{for}~t\in U~\text{and}~\psi(t)=0~\text{for}~t\in U^c ~\text{and}\\
f_1(t)&=\psi(t)+(1-\phi(t))f_0(t)\in S_{C_0(L)} \text{~for~}t\in L.
\end{align*}

Since $|f_1-f_0|=|\psi-\phi f_0|$, we have $|f_1-f_0|=|\phi|\left|\frac{f_0}{|f_0|}-f_0\right|\leq |1-|f_0||<\frac{\eps}{2}$ on $U$ and $|f_1-f_0|=0$ on $U^c$ which implies $\|f_1-f_0\|<\eps$.

We now show that if a scalar-valued regular Borel measure $\mu \in S_{C_0(L)^*}~(=S_{M(L)})$ satisfies $$\re \int_Lf_0d\mu=\re\mu(f_0)>1-\gamma^2,$$ 
then there exists $\nu \in S_{C_0(L)^*}~(=S_{M(L)})$ such that $\|\mu-\nu\|<\eps$ and $\nu(f_1)=1$. In order to prove it, we first identify each $\varphi\in M(L)$ as 
$$d\varphi=h_\varphi d |\varphi|$$
for some Borel measurable function $h_\varphi$ such that $|h_\varphi|=1$ on $L$ using Radon-Nikod\'ym theorem (see  \cite[Proposition 3.13]{Folland}).

Set $O=\left\{t\in L\colon \re f_0(t)h_\mu(t) \geq 1-\gamma\right\}$. Note that $O \subset V$, and it holds that
\begin{align*}
\left|1-\frac{\overline{ f_0(t)h_\mu(t)}}{|f_0(t)|}\right|
&\leq \left(\left|1-\re \frac{\overline{ f_0(t)h_\mu(t)}}{|f_0(t)|}\right|^2+\left|\text{Im~}\frac{\overline{ f_0(t)h_\mu(t)}}{|f_0(t)|}\right|^2\right)^{\frac12}\\
&\leq \left(\left|1-\re \frac{\overline{ f_0(t)h_\mu(t)}}{|f_0(t)|}\right|^2+1-\left|\re \frac{\overline{ f_0(t)h_\mu(t)}}{|f_0(t)|}\right|^2\right)^{\frac12}\\
&<\sqrt{\gamma^2+1-(1-\gamma)^2}=\sqrt{2\gamma}
\end{align*}
for every $t\in O$. We also see that 
\begin{align*}
1-\gamma^2
&<\re \int_Lf_0 d\mu=\int_{O}\re f_0 h_\mu d|\mu|+\int_{O^c}\re f_0 h_\mu d|\mu|\\
&\leq \int_{O}1d|\mu|+(1-\gamma)\int_{O^c}1d|\mu|=\|\mu\|-\gamma\int_{O^c}d|\mu|.
\end{align*}

Hence, we get $|\mu|\left(O^c\right)<\gamma$, and so
$$1-\gamma^2-\gamma< \int_{O}\re f_0 h_\mu d|\mu|.$$

For the restriction $|\mu|\big|_O$ of $|\mu|$ on $O$, define the measure $\nu \in M(L)$ by 
$$d\nu=\frac{1}{|\mu|(O)}\overline{\psi}d |\mu|\big|_O.$$
Note that $\overline{\psi}=\frac{\overline{f_0 }}{\left|f_0 \right|}$ on $O$.
 Finally, we see that, for arbitrary $g\in S_{C_0(L)}$,
\begin{align*}
|\mu(g)-\nu(g)|
&\leq \left|\int_O g h_\mu d|\mu|-\int_O g \frac{1}{|\mu|(O)}\overline{\psi} d|\mu|\right|+|\mu|\left(O^c\right)\\
&< \left|\int_O \left(1-\frac{1}{|\mu|(O)}\overline{\psi}\overline{h_\mu}\right) g h_\mu d|\mu|\right|+\gamma \\
&\leq \left|\int_O \left(1-\overline{\psi}\overline{h_\mu}\right) g h_\mu d|\mu|\right|+\left|\int_O \left(1-\frac{1}{|\mu|(O)}\right) g\overline{\psi} d|\mu|\right|+\gamma \\
&<\sqrt{2\gamma}+2\gamma<\eps\\
\end{align*}
and
\[\nu(f_1)=\int_L f_1 d\nu =\int_L f_1\frac{1}{|\mu|(O)}\overline{\psi}d|\mu|\big|_O=\frac{1}{|\mu|(O)}\int_O \psi\overline{\psi}d\left|\mu\right|=1.\qedhere\]
\end{proof}

It is worth noting that Proposition~\ref{C0AHP}  strengthens \cite[Proposition 4.4]{ABGM13} which proves that $C_0(L)^*$ has the $C_0(L)$-AHSp. As a consequence of Proposition~\ref{C0AHP} and Theorem~\ref{thm:AHpimpliesBPBZp}, we obtain the following.

\begin{corollary}\label{C0BPBZP} 
For every locally compact Hausdorff space $L$, the pair $(C_0(L),\ell_\infty)$ has the BPBZp.
\end{corollary}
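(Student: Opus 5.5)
The corollary follows by chaining two results already established. Proposition~\ref{C0AHP} shows that for every locally compact Hausdorff space $L$, the dual $C_0(L)^*$ has the $C_0(L)$-AHp. Theorem~\ref{thm:AHpimpliesBPBZp} says that whenever $X^*$ has the $X$-AHp, the pair $(X,\ell_\infty)$ has the BPBZp. Taking $X=C_0(L)$ gives the conclusion, so the proof is a two-line citation. Its only content is checking that the hypotheses match exactly.

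The one point to verify is the formulation of the $X$-AHp in Theorem~\ref{thm:AHpimpliesBPBZp}. There, condition (ii) is used with $x_0^*\in B_{X^*}$ rather than $S_{X^*}$. Indeed, the functionals $\overline{y_2(i)}T_i$ only lie in the unit ball, and the proof remarks that this strengthening costs only a relaxation of $\eta$. Proposition~\ref{C0AHP} is stated for $\mu\in S_{M(L)}$. If $\mu\in B_{M(L)}$ satisfies $\re\mu(f_0)>1-\eta'$, then $\|\mu\|>1-\eta'$, and $\mu/\|\mu\|$ is within $\eta'$ of $\mu$ and satisfies $\re(\mu/\|\mu\|)(f_0)>1-\eta'$. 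So replacing $\eta$ by a slightly smaller function and $\eps$ by $\eps/2$ handles this, exactly as the remark in the theorem's proof asserts.

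The other ingredient is that $x_1=f_1$ depends only on $f_0$ (and $\eps$), not on the functional. This uniformity is what the $X$-AHp demands, and the construction in Proposition~\ref{C0AHP} provides it, since $f_1$ is built from $f_0$ via Urysohn's lemma before $\mu$ is introduced. There is no real obstacle; the only care needed is this bookkeeping with the functions $\eta$.
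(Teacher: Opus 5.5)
Your proposal is correct and is exactly the paper's argument: the paper obtains the corollary directly by combining Proposition~\ref{C0AHP} with Theorem~\ref{thm:AHpimpliesBPBZp}. Your extra bookkeeping is sound and simply makes explicit the remark at the start of the theorem's proof. That bookkeeping covers two points: normalizing $\mu\in B_{M(L)}$ to pass from $S_{X^*}$ to $B_{X^*}$, with $\eta'(\eps)\leq\min\{\eta(\eps/2),\eps/2\}$, and the fact that $f_1$ depends only on $f_0$ and $\eps$.
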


Our next aim is to study the BPBZp for pairs of the form $(L_1(\mu),Y)$.

\begin{theorem}\label{L1thm}
Let $\mu$ be a measure such that $L_1(\mu)$ is infinite dimensional and $Y$ be a Banach space. 
\begin{enumerate}
\item If the pair $(L_1(\mu),Y)$ has the BPBZp, then $Y$ has the $Y^*$-AHSp.
\item If $Y$ has the $Y^*$-AHSp and the RNP, then the pair $(L_1(\mu),Y)$ has the BPBZp.
\end{enumerate} 
\end{theorem}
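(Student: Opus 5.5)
For (1), the plan is to reduce to Theorem \ref{l1charac}. An infinite-dimensional $L_1(\mu)$ contains a $1$-complemented isometric copy of $\ell_1$. Indeed, pick pairwise disjoint measurable sets $A_k$ with $0<\mu(A_k)<\infty$, which exist since $L_1(\mu)$ is infinite dimensional. The normalized indicators $e_k=\chi_{A_k}/\mu(A_k)$ then span an isometric copy of $\ell_1$. The conditional expectation
$$Pf=\sum_k \Bigl(\int_{A_k} f\,d\mu\Bigr) e_k$$
is a norm-one projection onto it. Let $J\colon \ell_1\to L_1(\mu)$ be the isometric embedding and $Q\colon L_1(\mu)\to\ell_1$ the contraction with $J\circ Q=P$. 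We transfer the problem as follows.

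Given $T\in\mathcal{L}(\ell_1,Y)$, $x_0\in S_{\ell_1}$ and $y_0^*\in S_{Y^*}$ with $|y_0^*Tx_0|>1-\eta$, consider $\tilde T=T\circ Q$. Then $\|\tilde T\|=1$ and $|y_0^*\tilde T(Jx_0)|>1-\eta$. The BPBZp for $(L_1(\mu),Y)$ gives $\tilde S$, $f_1$ and $y_1^*$ with $\|\tilde S\|=|y_1^*\tilde Sf_1|=1$, $\|f_1-Jx_0\|<\eps$ and $\|\tilde S-\tilde T\|<\eps$. We must come back to $\ell_1$, and the natural candidate is $S=\tilde S\circ J$ with $x_1=Qf_1$. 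The difficulty is that $f_1$ need not lie in the range of $P$, so $\|Qf_1\|$ could be less than $1$ and $|y_1^*\tilde S J Q f_1|$ need not equal $1$.

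Rather than fight this directly, I would argue via the characterization. I would show that the BPBZp for $(L_1(\mu),Y)$ implies the $Y^*$-AHSp by running the ``only if'' direction of the proof of \cite[Theorem 3.6]{ABGM13}, i.e.\ of Theorem \ref{l1charac}, inside $L_1(\mu)$. Given $\{y_k\}\subset B_Y$, $y^*\in S_{Y^*}$ and $a\in S_{\ell_1}$ of finite support, as allowed by Remark \ref{remAHSp}, define $T f=\sum_k \bigl(\int_{A_k}f\,d\mu\bigr) y_k$ and $f_0=\sum_k a_k e_k$. Then $\|T\|\le 1$ and $\re y^*(Tf_0)>1-\eta$. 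Applying the BPBZp yields $S$, $f_1$ and $y_1^*$ with $y_1^*(Sf_1)=1$ after rotation. Since $y_1^*\circ S$ is a norm-one element of $L_\infty(\mu)$ attaining its norm at $f_1$, the function $y_1^*\circ S$ equals $\overline{\sign f_1}$ a.e.\ on the support of $f_1$. For each $k$, $f_1$ carries most of its mass on $A_k$ wherever $a_k$ is large in aggregate. A Chebyshev/averaging argument then finds, for a set $A$ of indices with $\sum_{k\in A}a_k>1-\eps'$, a normalized piece $g_k=f_1\chi_{A_k}/\|f_1\chi_{A_k}\|$ with $\|Sg_k-y_k\|$ small and $y_1^*(Sg_k)=1$. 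Setting $z_k=Sg_k$ (of norm $1$) and $z^*=y_1^*$ gives (i)--(iv). Controlling $\|Sg_k-y_k\|$ is the main technical step. It requires $\|g_k-e_k\|$ small for most $k$, which comes from $\|f_1-f_0\|<\eps$ through the usual split of $\ell_1$-mass estimates, exactly as in the $\ell_1$ argument.

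For (2), I would follow the scheme of \cite{CKLM14}, where AHSp plus RNP gives the BPBp for $(L_1(\mu),Y)$, strengthening it with the functional. First reduce to $\mu$ finite, or $\sigma$-finite, and to $T$ given by a Bochner density. By the RNP, $\mathcal{L}(L_1(\mu),Y)\cong L_\infty(\mu,Y)$, and simple functions are dense. Then approximate $T$ by $T'$ with density $\sum_k y_k\chi_{A_k}$ on a finite partition, at the cost of a small error. Approximate $f_0$ by its conditional expectation $\sum_k a_k e_k$, up to signs, so that $y_0^*(T'f_0)=\sum_k a_k y_0^*(\pm y_k)$ is close to $1$. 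The $Y^*$-AHSp then supplies $A$, $z_k$ and $z^*$. Build $S$ with density $z_k$ on $A_k$ for $k\in A$, and keep $y_k$ elsewhere. Take $f_1$ as the normalized restriction of $f_0$ to $\bigcup_{k\in A}A_k$, and set $y_1^*=z^*$. The expected obstacle is the initial approximation: passing from general $T$ and $f_0$ to simple-function versions while keeping $|y_0^*Tf_0|$ near $1$. The RNP handles $T$, and for $f_0$ I would use martingale/conditional-expectation convergence on a fine enough partition adapted to $T$'s density.
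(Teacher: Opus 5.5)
Your route is the paper's. For (1) you run the $\ell_1$-type necessity argument directly inside $L_1(\mu)$ on pairwise disjoint sets $A_k$ of finite positive measure, with the vectors in $Y$ and the functional in $Y^*$; the paper does this by modifying the proof of \cite[Theorem 2.4]{ABCGKLM14}. For (2) you use the RNP representation of operators by densities, as in \cite{CKLM14} and \cite[Theorem 2.6]{ABCGKLM14}.

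Part (1) works as sketched. Take $A=\{k\colon \|(f_1-f_0)\chi_{A_k}\|<\sqrt{\eps'}\,a_k\}$. Then $\sum_{k\notin A}a_k<\sqrt{\eps'}$ and $\|g_k-e_k\|\leq 2\sqrt{\eps'}$ for $k\in A$, so $\|Sg_k-y_k\|\leq\|S-T\|+\|g_k-e_k\|$ is small. The identity $y_1^*(Sg_k)=1$ follows from the additivity of the $L_1$-norm over disjoint pieces of $f_1$; you do not need $L_1(\mu)^*=L_\infty(\mu)$, which fails for some non-$\sigma$-finite $\mu$. Two small points: normalize $T$ before applying the BPBZp, and absorb the unimodular factor of $y_1^*(Sf_1)$ into $f_1$. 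The latter costs little because $y_1^*(Sf_1)$ is close to $y^*(Tf_0)\approx 1$.

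In (2), however, one step is false as written: finitely-valued simple functions are not dense in $L_\infty(\mu,Y)$. For instance, a density equal to $e_n\in\ell_2$ on disjoint sets $B_n$ of positive measure is at distance at least $1/2$ from every finitely-valued function. So you cannot approximate $T$ by a density $\sum_k y_k\chi_{A_k}$ over a finite partition. What the RNP gives is an essentially separably valued density, hence uniform approximation by a \emph{countably}-valued one. That suffices, because the $Y^*$-AHSp is formulated for sequences; this is exactly the point the paper stresses in Section~5.

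The partition must also be refined so that $f_0$ is close in $L_1$ to a step function $\sum_k c_k\chi_{A_k}$. Then:
\begin{itemize}
\item the AHSp is applied to $x_k=\sign(c_k)y_k$ with weights $|c_k|\mu(A_k)$;
\item the density of $S$ on $A_k$ ($k\in A$) must be $\overline{\sign(c_k)}z_k$;
\item $f_1$ must be the normalized restriction of the step function, not of $f_0$. The function $f_0$ need not have constant phase on each $A_k$, so $S$ need not attain its norm there.
\end{itemize}

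Finally, you only assert the passage from an arbitrary measure to a $\sigma$-finite one, yet this is the only part the paper writes out. Take a $\sigma$-finite set $E$ carrying $f_0$ and a norming sequence of $T$, and let $Pf=f\chi_E$. Apply the $\sigma$-finite case to $T\circ P$ to get $S_1$, $f_1$, $y_1^*$. Then set $S=S_1\circ P+T\circ(\Id-P)$, which has norm one because $\|f\|=\|Pf\|+\|f-Pf\|$.
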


\begin{proof}Since the statements (1) for a general measure and (2) for a $\sigma$-finite measure are followed by simple modifications of proofs of \cite[Theorem 2.4 and 2.6]{ABCGKLM14} respectively, we only comment the differences. In their proofs,  the sequences of vectors and a functional had to be chosen in $Y^*$ and $Y$ respectively since a bilinear form on $L_1(\mu)\times Y$ corresponds to an operator in $\mathcal{L}(L_1(\mu),Y^*)$. Since we are working with an operator in $\mathcal{L}(L_1(\mu),Y)$, we apply the same arguments with the sequence of vectors and a functional in $Y$ and $Y^*$ respectively. 

Note that if a Banach space $Y$ has the RNP and the $Y^*$-AHSp with a function $\eta$, then the pair $(L_1(\mu), Y)$ has the BPBZp with the same function $\eta$ for every $\sigma$-finite measure $\mu$ according to the modification of the proof of \cite[Theorem 2.6]{ABCGKLM14}.

In order to prove (2) for a general measure $\mu$, we need a modification of the proof of \cite[Proposition 2.1]{CKLM14}. Suppose that $T\in S_{\mathcal{L}(L_1(\mu),Y)}$, $f_0\in S_{L_1(\mu)}$ and $y^*_0\in S_{Y^*}$ satisfy 
$$\left|y^*_0(Tf_0)\right|>1-\eta(\eps).$$

 For a norming sequence $(g_n)_{n=1}^\infty\subset S_{L_1(\mu)}$ of $T$ (that means $\lim_{n\to \infty} \|Tg_n\|=\|T\|$), take countably many measurable sets $E_n$ with finite measures such that the closed linear span $\overline{\text{span}}\{ \chi_{E_k}\colon k\in \mathbb{N} \}$ of characteristic functions $\chi_{E_k}$ on $E_k$ contains $\{f_0\}\cup\{g_k\colon k\in \mathbb{N}\}$.

 Set $E= \bigcup_{k} E_k$ and $X = \{ f\chi_E \colon f\in L_1(\mu)\}$. Note that $X$ is isometrically isomorphic to $L_1(\mu|_E)$, and that, for the canonical projection $P\colon L_1(\mu) \longrightarrow X$ defined by $Pf := f\chi_E$ for $f\in L_1(\mu)$, $\|f\| = \|Pf\| + \|(\Id_{L_1(\mu)}-P)f\|$ for every $f\in L_1(\mu)$. For convenience, we also consider $P$ as an operator from $L_1(\mu)$ to $L_1(\mu|_E)$ if it is needed.

Define $T_1=T\circ P$ , then we see that $\|T_1\|=1$ and $|y^*_0(T_1 f_0)| > 1- \eta(\eps)$. Since $\mu|_E$ is $\sigma$-finite, there exist $S_1\in S_{\mathcal{L}(X,Y)}$, $f_1\in S_{X}$ and $y^*_1\in S_{Y^*}$ such that 
$$|y^*_1(S_1f_1)|=1,~\|T_1-S_1\|< \eps,~\|y_0^*-y_1^*\|<\eps\text{~and~}\|f_0-f_1\|< \eps.$$
Then, $S := S_1\circ P + T\circ (\Id_{L_1(\mu)}-P)$ is the desired operator in $S_{\mathcal{L}(L_1(\mu),Y)}$. Indeed, it holds that
$$\|Sf\|\leq \|S_1\circ Pf\|+\|T\circ (\Id_{L_1(\mu)}-P)f\|\leq \|Pf\|+\|(\Id_{L_1(\mu)}-P)f\|=\|f\|$$
for every $f\in L_1(\mu)$, and 
\[|y^*_1(Sf_1)|=|y^*_1(S_1f_1)|=1\quad \text{ and }\quad \|T-S\|=\|T_1-S_1\|< \eps.\qedhere\]
\end{proof}

We have the following consequences. Note that (2) of the following result was also given in Corollary \ref{uniformlysmoothBPBpcor}.

\begin{corollary}\label{KLMstrengthen}~
\begin{enumerate}
\item For a locally compact Hausdorff space $L$ and a measure $\mu$, if both $L_1(\mu)$ and $C_0(L)$ are infinite dimensional, then the pair $(L_1(\mu),C_0(L))$ fails the BPBZp. In particular, the spaces $L_1(\mu)\oplus_1 C_0(L)$ and $L_1(\mu)\oplus_\infty C_0(L)$ fail the BPBp-nu.
\item For every measures $\mu$ and $\nu$ and every $1<p<\infty$, the pair $(L_1(\mu),L_p(\nu))$ has the BPBZp.
\end{enumerate}
\end{corollary}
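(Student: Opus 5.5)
For (1), we would argue by contraposition using Theorem~\ref{L1thm}(1). If $(L_1(\mu),C_0(L))$ had the BPBZp, then $C_0(L)$ would have the $C_0(L)^*$-AHSp. This contradicts Example~\ref{CLexample}, since $L$ is infinite whenever $C_0(L)$ is infinite dimensional. Here $L_1(\mu)$ is infinite dimensional, as Theorem~\ref{L1thm} requires.

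For the ``in particular'' part, recall that $n(L_1(\mu))=1$ and $n(C_0(L))=1$. If $L_1(\mu)\oplus_1 C_0(L)$ had the BPBp-nu, it would a fortiori have the weak BPBp-nu. Theorem~\ref{thm1}, applied with $X=L_1(\mu)$, would then give the BPBZp for $(L_1(\mu),C_0(L))$, which is impossible. Likewise, Theorem~\ref{thm2} applies to $L_1(\mu)\oplus_\infty C_0(L)$ because $n(C_0(L))=1$, and the same contradiction follows.

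For (2), there are two routes.
\begin{itemize}
\item The direct route is Corollary~\ref{uniformlysmoothBPBpcor}(1) with $p=1$.
\item Alternatively, we can use Theorem~\ref{L1thm}(2). For $1<p<\infty$, $L_p(\nu)$ is reflexive, hence has the RNP. It is also uniformly convex, so by Example~\ref{exampleY*AHSP}(2) it has the $L_p(\nu)^*$-AHSp.
\end{itemize}

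If $L_1(\mu)$ is finite dimensional, Theorem~\ref{L1thm} does not apply as stated. In that case we fall back on the first route. Corollary~\ref{uniformlysmoothBPBpcor} has no dimension restriction: it relies on Proposition~\ref{uniformlysmoothBPBp} together with the BPBp for $(L_1(\mu),L_p(\nu))$ from \cite{CKLM14}.

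There is no real obstacle. The only point needing care is matching hypotheses: the infinite dimensionality assumptions must be used to invoke Example~\ref{CLexample} and Theorem~\ref{L1thm}, and one must note that the BPBp-nu implies the weak BPBp-nu.
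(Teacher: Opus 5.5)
Your proposal is correct and follows the paper's proof. Part (1) comes from Theorem~\ref{L1thm}(1) and Example~\ref{CLexample}, with the ``in particular'' part from Theorems~\ref{thm1} and~\ref{thm2} using $n(L_1(\mu))=n(C_0(L))=1$. Part (2) comes from Theorem~\ref{L1thm}(2) with Example~\ref{exampleY*AHSP}(2), and the paper also notes that it is already contained in Corollary~\ref{uniformlysmoothBPBpcor}. Your fallback to Corollary~\ref{uniformlysmoothBPBpcor} when $L_1(\mu)$ is finite dimensional is a worthwhile extra bit of care, since Theorem~\ref{L1thm} is stated only for infinite-dimensional $L_1(\mu)$ and the paper's one-line proof of (2) does not address that case.
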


\begin{proof}(1) is a direct consequence of (1) of Theorem \ref{L1thm} and Example \ref{CLexample}, (2) is also a direct consequence of (2) of Theorem \ref{L1thm} and Proposition \ref{exampleY*AHSP}.
\end{proof}

Let us comment that it is shown in \cite{KLM16} that there exists a compact Hausdorff space $S$ such that both $L_1[0,1]\oplus_1 C(S)$ and $L_1[0,1]\oplus_\infty C(S)$ fail the BPBp-nu. This follows from a result of J.\ Johnson and J.\ Wolfe \cite{JW82} which asserts that there exists a compact Hausdorff space $S$ such that the set of norm attaining operators is not dense in $\mathcal{L}(L_1[0,1],C(S))$. Corollary \ref{KLMstrengthen}.(1) is a strengthening of that result in \cite{KLM16}.

\section{A sufficient condition for the BPBp-nu}\label{section:sufficient-condition-BPBpnu}

Our main goal in this section is to prove that the real $\ell_\infty$ space has the BPBp-nu. To this end, we introduce a property which is a sufficient condition for a Banach space $X$ to have the BPBp-nu under certain additional assumptions.

\begin{definition} A Banach space $X$ is said to have \emph{property (nu)} if, for every $\eps>0$, there exists $\eta(\eps)>0$ such that whenever $T\in S_{\mathcal{L}(X)}$, $x_0\in S_X$ and $x_0^*\in S_{X^*}$ satisfy $$\re x_0^*(x_0)>1-\eta(\eps)\quad \text{ and } \quad \left|x_0^*Tx_0\right|=1,$$ then there exist $x_1^*\in S_{X^*}$, $x_1\in S_X$ and $S\in S_{\mathcal{L}(X)}$ such that  $$\left|x_1^*Sx_1\right|=x_1^*(x_1)=1\quad \text{ and } \quad \max\{\|x_1-x_0\|,\|x_1^*-x_0^*\|,\|S-T\| \}<\eps.$$
\end{definition}

We first show that for a Banach space $X$ with numerical index $1$ such the pair $(X,X)$ has the BPBZp, property (nu) is a sufficient condition for $X$ to have the BPBp-nu.

\begin{theorem}\label{nuimpliesBPBPnu} Let $X$ be a Banach space with $n(X)=1$. If the pair $(X,X)$ has the BPBZp and $X$ has property (nu), then $X$ has the BPBp-nu.
\end{theorem}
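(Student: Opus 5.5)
The plan is to compose the two hypotheses: first apply the BPBZp for $(X,X)$ to move to an operator attaining its norm at a pair $(x_1,y_1^*)$, and then use property (nu) to pull $(x_1,y_1^*)$ back into $\Pi(X)$. Let $\eta_Z$ be a BPBZp function for $(X,X)$ and $\eta_\nu$ a property (nu) function. Given $0<\eps<1$, set $\eps_1=\min\{\eps/2,\eta_\nu(\eps/2)/2\}$. I claim that $\eta(\eps)=\eta_Z(\eps_1)$ works as a BPBp-nu function.

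Let $T\in\mathcal{L}(X)$ with $\nu(T)=1$, and let $(x_0,x_0^*)\in\Pi(X)$ with $|x_0^*Tx_0|>1-\eta(\eps)$. Since $n(X)=1$ we have $\|T\|=\nu(T)=1$. So $T$, $x_0$ and $y_0^*:=x_0^*$ satisfy the hypothesis of the BPBZp. This yields $x_1\in S_X$, $y_1^*\in S_{X^*}$ and $R\in\mathcal{L}(X)$ with
\[
\|R\|=|y_1^*Rx_1|=1,\quad \|x_1-x_0\|<\eps_1,\quad \|y_1^*-x_0^*\|<\eps_1,\quad \|R-T\|<\eps_1.
\]
The pair $(x_1,y_1^*)$ need not lie in $\Pi(X)$, but it almost does:
\[
\re y_1^*(x_1)\ge \re x_0^*(x_0)-\|y_1^*-x_0^*\|-\|x_1-x_0\|>1-2\eps_1\ge 1-\eta_\nu(\eps/2).
\]

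We are now exactly in the hypothesis of property (nu): $R\in S_{\mathcal{L}(X)}$, $\re y_1^*(x_1)>1-\eta_\nu(\eps/2)$ and $|y_1^*Rx_1|=1$. Property (nu) gives $x_2\in S_X$, $x_2^*\in S_{X^*}$ and $S\in S_{\mathcal{L}(X)}$ with $|x_2^*Sx_2|=x_2^*(x_2)=1$, and with $x_2,x_2^*,S$ each within $\eps/2$ of $x_1,y_1^*,R$ respectively. Hence $(x_2,x_2^*)\in\Pi(X)$. Using $n(X)=1$ again,
\[
1=|x_2^*Sx_2|\le\nu(S)=\|S\|=1,
\]
so $\nu(S)=|x_2^*Sx_2|=1$. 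The triangle inequality gives $\|S-T\|<\eps_1+\eps/2\le\eps$, and in the same way $\|x_2-x_0\|<\eps$ and $\|x_2^*-x_0^*\|<\eps$. This is the BPBp-nu.

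The only delicate point is the bookkeeping of constants. The BPBZp tolerance must be small enough for two things at once: it must keep the final distances below $\eps$, and it must make the perturbed pair $(x_1,y_1^*)$ nearly dual, namely within $\eta_\nu(\eps/2)$, so that property (nu) applies. The choice of $\eps_1$ above handles both. We also need $n(X)=1$ twice: first to convert $\nu(T)=1$ into $\|T\|=1$ at the start, and then to turn $\|S\|=1$ into $\nu(S)=1$ at the end.
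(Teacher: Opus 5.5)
Your proof is correct and is essentially the paper's argument: apply the BPBZp for $(X,X)$ with tolerance $\min\{\eps/2,\eta_\nu(\eps/2)/2\}$, which is exactly the paper's $\eps'$. Then note that the resulting pair is almost in $\Pi(X)$, apply property (nu) with $\eps/2$, and finish with the triangle inequality. A minor remark: the second appeal to $n(X)=1$ is unnecessary, since $1=|x_2^*Sx_2|\le\nu(S)\le\|S\|=1$ holds in any Banach space.
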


\begin{proof}
Suppose that $(X,X)$ has the BPBZp witnessed by a function $\eta$ and that $X$ has property (nu) witnessed by a function $\gamma$.

 For $0<\eps<1$, define $\eps' = \min\left\{\frac{1}{2}\gamma\left(\frac{\eps}{2}\right), \frac{\eps}{2}\right\}$, and assume $T\in \mathcal{L}(X)$ and $(x_0,x^*_0)\in \Pi(X)$ satisfy  
$$\nu(T)=1\text{~and~} |x_0^*(Tx_0)|>1-\eta(\eps').$$

 Since $\|T\|=\nu(T)$, there exist $R\in \mathcal{L}(X)$ and $(z,z^*)\in S_X\times S_{X^*}$ satisfying 
$$|z^*(Rz)|=1=\|R\| \quad\text{~and~}\quad \max\{\|T-R\|,\|x_0-z\|,\|x_0^*-z^*\|\}<\eps'$$
from the assumption that $(X,X)$ has the BPBZp with a function $\eta$. This gives that 
$$\re z^* (z) \geq \re x_0^*(x_0)-\|x_0-z\|-\|x_0^*-z^*\|>1-2\eps'\geq 1-\gamma\left(\frac{\eps}{2}\right).$$

Hence, by property (nu), we have $S\in \mathcal{L}(X)$ and $(x_1,x_1^*)\in \Pi(X)$ satisfying
$$\left|x_1^*(Sx_1)\right|=1=\|S\| \quad\text{~and~}\quad \max\{\|S-R\|,\|x_1-z\|,\|x_1^*-z^*\|\}<\frac{\eps}{2}.$$

These give $\max\{\|S-T\|,\|x_1-x_0\|,\|x_1^*-x_0^*\|\}<\eps$ which finishes the proof.
\end{proof}

On the other hand, property (nu) is a necessary condition for a Banach space $X$ with numerical index $1$ to have the BPBp-nu.

\begin{theorem}\label{basicnu} Let $X$ be a Banach space with $n(X)=1$. If $X$ has the BPBp-nu, then $X$ has property (nu). In particular, under the conditions that $n(X)=1$ and $(X,X)$ has the BPBZp, property (nu) is equivalent to BPBp-nu.
\end{theorem}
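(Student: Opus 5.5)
Assume $X$ has the BPBp-nu with function $\eta$ and $n(X)=1$, so $\nu(U)=\|U\|$ for every $U\in\call(X)$. Given $\eps\in(0,1)$, the plan is to find $\gamma(\eps)$ such that the hypotheses of property (nu) produce a pair in $\Pi(X)$ close to $(x_0,x_0^*)$ at which $T$ almost attains its numerical radius. We then apply the BPBp-nu. The only mismatch between the two properties is that in property (nu) the pair $(x_0,x_0^*)$ need not lie in $\Pi(X)$, but only satisfies $\re x_0^*(x_0)>1-\gamma$. In exchange, $|x_0^*Tx_0|=1=\|T\|$ holds exactly.

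\textbf{Step 1.} Apply the Bishop-Phelps-Bollob\'as theorem to $x_0$, $x_0^*$ with $\re x_0^*(x_0)>1-\frac{\delta^2}{2}$, where $\delta$ is chosen below. This gives $(z,z^*)\in\Pi(X)$ with $\|z-x_0\|<\delta$ and $\|z^*-x_0^*\|<\delta$. Then
$$|z^*Tz|\geq |x_0^*Tx_0|-\|z^*-x_0^*\|-\|z-x_0\|>1-2\delta,$$
and $\nu(T)=\|T\|=1$.

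\textbf{Step 2.} Choose $\delta$ with $2\delta\leq \eta(\eps/2)$ and $\delta<\eps/2$, and set $\gamma(\eps)=\delta^2/2$. The BPBp-nu applied to $T$ and $(z,z^*)$ yields $S'\in\call(X)$ and $(x_1,x_1^*)\in\Pi(X)$ with $\nu(S')=|x_1^*S'x_1|=1$, $\|S'-T\|<\eps/2$, $\|x_1-z\|<\eps/2$ and $\|x_1^*-z^*\|<\eps/2$. Since $n(X)=1$, we have $\|S'\|=\nu(S')=1$, so $S'\in S_{\call(X)}$ and $|x_1^*S'x_1|=x_1^*(x_1)=1$. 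The triangle inequality gives distances to $x_0$, $x_0^*$ and $T$ smaller than $\delta+\eps/2<\eps$. This proves property (nu).

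\textbf{Step 3 (the ``in particular'').} One direction is Theorem \ref{nuimpliesBPBPnu}: under $n(X)=1$ and the BPBZp for $(X,X)$, property (nu) implies the BPBp-nu. The other direction is what was just proved.

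I expect no real obstacle. The only point needing care is that the BPBp-nu gives $\nu(S')=1$, and the property-(nu) requirement $\|S'\|=1$ follows from this only through $n(X)=1$, which is where the hypothesis is used. If one only had the weak BPBp-nu, normalizing $S'$ would be required, but that is unnecessary here.
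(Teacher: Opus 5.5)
Your proof is correct and follows essentially the same argument as the paper. The paper also uses the Bishop--Phelps--Bollob\'as theorem to replace $(x_0,x_0^*)$ by a nearby pair $(z,z^*)\in\Pi(X)$, applies the BPBp-nu with $\eps/2$ using $\gamma(\eps)<\min\{\tfrac12\eta(\eps/2),\eps/2\}$, and relies on $n(X)=1$ to identify $\nu$ with the norm. The ``in particular'' part likewise follows by combining this with Theorem~\ref{nuimpliesBPBPnu}.
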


\begin{proof} Suppose that $X$ has property (nu) witnessed by a function $\eta$, and, for $0<\eps<1$, take $\gamma(\eps)>0$ such that $\gamma(\eps)< \min\left\{\frac{1}{2}\eta\left(\frac{\eps}{2}\right), \frac{\eps}{2}\right\}$.

 Assume $T\in S_{\mathcal{L}(X)}$, $x_0\in S_X$ and $x_0^*\in S_{X^*}$ satisfy 
$$\re x_0^*(x_0)>1-\frac{\gamma(\eps)^2}{2} \text{~and~}\left|x_0^*Tx_0\right|=1.$$

 By Bishop-Phelps-Bollob\'as theorem, there exist $z\in S_X$ and $z^*\in S_{X^*}$ such that
$$\re z^*(z)=1~\text{and}~\max\{\|x_0-z\|,\|x_0^*-z^*\|\}<\gamma(\eps).$$

This gives that
$$|z^*Tz|\geq |x_0^*Tx_0|-\|x_0-z\|-\|x_0^*-z^*\|>1-2\gamma(\eps) >1-\eta\left(\frac{\eps}{2}\right).$$

Hence, there exist $x_1^*\in S_{X^*}$, $x_1\in S_X$ and $S\in S_{\mathcal{L}(X)}$ such that
$$|x_1^*Sx_1|=x_1^*(x_1)=1~\text{and}~\max\{\|x_1-z\|,\|x_1^*-z^*\|,\|S-T\| \}<\frac{\eps}{2}.$$

These give $\max\{\|S-T\|,\|x_1-x_0\|,\|x_1^*-x_0^*\|\}<\eps$ which finishes the proof.
\end{proof}

We do not know whether the condition $n(X)=1$ in the statement of Theorem \ref{basicnu} can be removed.

\begin{question}Does a Banach space $X$ have property (nu) whenever $X$ has the BPBp-nu?
\end{question}
We present some basic examples of spaces having property (nu).

\begin{example} The following spaces $X$ have property (nu).
\begin{enumerate}
\item $X=L_1(\mu)$ for every measure $\mu$.
\item $X=c_0$. 
\item $X=\mathcal{H}$ for a Hilbert space $\mathcal{H}$.
\end{enumerate}
\end{example}
\begin{proof}
(1) and (2) follow by Theorem \ref{basicnu} and  the facts that $X$ has the BPBp-nu (see \cite{GK13, KLM14}) and that $n(X)=1$ (see \cite{KaMaPa}).

(3) can be shown using the micro-transitivity of the norm of a Hilbert space (see \cite[Page 4]{DKLM20}). Indeed, let $\delta$ be the modulus of convexity of $\mathcal{H}$, and, for given $\eps>0$, assume $T\in S_{\mathcal{L}(\mathcal{H})}$, $x_0\in S_{\mathcal{H}}$ and $y_0\in S_{\mathcal{H}}$ satisfy 
$$\re \langle x_0,y_0\rangle >1-\delta(\eps) \text{~and~} \left|\langle Tx_0, y_0\rangle \right|=1.$$

By the definition of the modulus of convexity, we get $\|x_0-y_0\|<\eps$. Hence, by the micro-transitivity of the norm, there exists an isometry $R\in \mathcal{L}(\mathcal{H})$ such that 
$$x_0=Ry_0\quad \text{~and~}\quad \|\Id_{\mathcal{H}}-R\|=\|x_0-y_0\|<\eps.$$ 
Hence, we have that $\|T-T\circ R\|<\eps$, $\langle y_0,y_0\rangle =1$ and $\left|\langle T\circ Ry_0, y_0\rangle\right| =1$. 
\end{proof}

We now prove that the all the real spaces $C_0(L)$ have property (nu).

\begin{theorem}\label{prop-real-CK}
Let $L$ be a locally compact Hausdorff topological space. For the real space $C_0(L)$, the following holds:
For given $0<\varepsilon<1/4$, if $f_0\in S_{C_0(L)}$, $\mu \in S_{C_0(L)^*}$ and $T\in S_{\call(C_0(L))}$ satisfy
$$\mu(f_0)>1-\frac{\varepsilon^2}{50}\quad \text{~and~}\quad|\mu(Tf_0)|=1,$$
then there exist $f_1\in S_{C_0(L)}$ and $\nu\in S_{C_0(L)^*}$ such that
$$|\nu(Tf_1)|=\nu(f_1)=1,\quad \|f_1-f_0\|<\varepsilon,\quad \|\nu-\mu\|<\varepsilon.$$
In particular, every real $C_0(L)$ has property (nu).
\end{theorem}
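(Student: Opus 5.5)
The plan is to keep the operator $T$ unchanged and move only the point and the functional. The key device is a real-convexity trick: write $f_0$ as the midpoint of a function $f_1$ and another function $f_2$, both in the unit ball, where $f_1$ takes the value $\pm1$ on the set where $|f_0|$ is close to $1$. We use the Riesz identification $C_0(L)^*=M(L)$ and write $d\mu=h_\mu\,d|\mu|$ with $h_\mu$ Borel and $h_\mu\in\{-1,1\}$, since the scalars are real.

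\emph{Step 1 (the new point).} As in the proof of Proposition~\ref{C0AHP}, set
$V=\{t\colon |f_0(t)|\geq 1-\eps/4\}$ and $U=\{t\colon |f_0(t)|>1-\eps/2\}$.
Choose an Urysohn function $0\leq\phi\leq1$ with $\phi=1$ on $V$ and $\phi=0$ off $U$. Define
$$f_1=\phi\,\sign(f_0)+(1-\phi)f_0,$$
with $\phi\,\sign(f_0)$ extended by $0$ off $U$, so that $f_1\in C_0(L)$. Then $\|f_1-f_0\|\leq\eps/2$, and $f_1=\sign f_0$ on $V$. Put $f_2=2f_0-f_1$. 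Pointwise,
$$|f_2|=\bigl|(1+\phi)|f_0|-\phi\bigr|\leq 1,$$
since $(1+\phi)|f_0|-\phi$ lies in $[-\phi,1]$. Hence $f_2\in B_{C_0(L)}$ and $f_0=\tfrac12(f_1+f_2)$.

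\emph{Step 2 ($\mu$ still norms $Tf_1$).} Let $\theta=\mu(Tf_0)\in\{\pm1\}$. From
$$\theta=\tfrac12\bigl(\mu(Tf_1)+\mu(Tf_2)\bigr)$$
and $|\mu(Tf_i)|\leq1$, we get $\mu(Tf_1)=\theta$. Consequently, $Tf_1\,h_\mu=\theta$ holds $|\mu|$-almost everywhere. This is the key consequence: \emph{every} normalized restriction of $|\mu|$, twisted by $h_\mu$, also takes the value $\theta$ at $Tf_1$.

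\emph{Step 3 (the new functional).} Let
$$O=\{t\colon f_0(t)h_\mu(t)\geq1-\eps/5\}.$$
Then $O\subset V$, and on $O$ we have $h_\mu=\sign f_0$, so $f_1h_\mu=1$ there. Exactly as in Proposition~\ref{C0AHP},
$$1-\eps^2/50<\mu(f_0)\leq 1-(\eps/5)\,|\mu|(O^c),$$
which gives $|\mu|(O^c)<\eps/10$. Define
$$d\nu=|\mu|(O)^{-1}\,h_\mu\,d|\mu|\big|_O.$$
Then $\|\nu\|=1$, $\nu(f_1)=1$, and by Step 2, $\nu(Tf_1)=\theta$, so $|\nu(Tf_1)|=1$. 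Finally,
$$\|\nu-\mu\|\leq |\mu|(O^c)+\bigl(1-|\mu|(O)\bigr)\leq 2|\mu|(O^c)<\eps.$$
The remark ``in particular'' follows by taking $S=T$ in the definition of property (nu), with $\eta(\eps)=\eps^2/50$.

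The main obstacle I expect is Step 2. Naively modifying $f_0$ destroys the equality $|\mu(Tf_0)|=1$, and nothing about $T$ can be controlled. The midpoint decomposition $f_0=\tfrac12(f_1+f_2)$ with $\|f_2\|\leq1$ is what transfers norm attainment from $f_0$ to $f_1$. This is the point where realness is used: $h_\mu$ and $\sign f_0$ are $\pm1$-valued, so matching signs on $O$ forces $f_1h_\mu=1$ exactly, and $\theta$ is a sign. The remaining work is checking that the constants ($\eps/4$, $\eps/5$, $\eps^2/50$) fit together, which is routine.
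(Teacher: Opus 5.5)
Your proof is correct and is essentially the paper's argument: the paper also takes $S=T$, defines $\nu$ as the normalized restriction of $\mu$ to the set where $f_0$ is close to $\pm1$ with the same sign as $\mu$ (its $V\cap(P_\mu^+\cup N_\mu^-)$ is your $O$ with a different threshold), builds the same Urysohn-modified $f_1$, and concludes via $f_0=\frac12\bigl((2f_0-f_1)+f_1\bigr)$ with $\|2f_0-f_1\|\leq 1$. The only difference is the order of the steps: the paper first shows $|\nu(Tf_0)|=1$ and then applies the midpoint trick to $\nu$, whereas you apply it to $\mu$ and then restrict, which works equally well.
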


In order to prove Theorem \ref{prop-real-CK}, we need the following lemma which is a version for $C_0(L)$ of \cite[Lemma 2.5]{AGR14}. Note that \cite[Lemma 2.5]{AGR14} was stated for $C(K)$ spaces (where $K$ compact), but the proof works for arbitrary locally compact Hausdorff spaces. Hence, we omit the proof.

\begin{lemma}[{\cite[Lemma 2.5]{AGR14}}]\label{lem:norm1C0L}
Let $L$ be a locally compact Hausdorff space and $f$ be an element of a real $C_0(L)$ space with $\|f\|=1$. Then,  a regular Borel measure $\mu\in S_{C_0(L)^*}$ satisfy $\int_L f\, d\mu=1$ if and only if 
$$
|\mu|((\{f=1\}\cap P)\cup (\{f=-1\}\cap N))=1
$$
where $P$ and $N$ are positive and negative parts in a Hahn decomposition of $\mu$.
\end{lemma}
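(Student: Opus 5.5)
The plan is to rewrite the integral against $\mu$ as an integral against the total variation $|\mu|$ with a sign function. Then the equality $\int_L f\,d\mu=1$ becomes an equality case of $\int g\,d|\mu|\leq |\mu|(L)=1$ for a function $g$ with $|g|\leq 1$.

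\emph{Setting up.} Fix a Hahn decomposition $L=P\cup N$ for the real measure $\mu$, with $P\cap N=\emptyset$ and both Borel. Put $h=\chi_P-\chi_N$. Then $\mu^+=\mu|_P$, $\mu^-=-\mu|_N$, $|\mu|=\mu^++\mu^-$, and $d\mu=h\,d|\mu|$. This is the real case of the polar decomposition $d\mu=h_\mu\,d|\mu|$ already used in the proof of Proposition~\ref{C0AHP}. Hence
$$\int_L f\,d\mu=\int_L fh\,d|\mu|,$$
where $g:=fh$ is a Borel function with $|g|\leq \|f\|=1$, and $|\mu|(L)=\|\mu\|=1$.

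\emph{The equality case.} Since $1-g\geq 0$ everywhere, we have
$$1-\int_L f\,d\mu=\int_L (1-g)\,d|\mu|\geq 0,$$
with equality if and only if $g=1$ holds $|\mu|$-almost everywhere. So $\int_L f\,d\mu=1$ if and only if $|\mu|(\{g=1\})=|\mu|(L)=1$. The set $\{g=1\}$ is Borel, because $f$ is continuous and $P,N$ are Borel. Since $h$ takes only the values $\pm1$, we have $fh=1$ exactly when either $h=1$ and $f=1$, or $h=-1$ and $f=-1$. That is,
$$\{g=1\}=(\{f=1\}\cap P)\cup(\{f=-1\}\cap N).$$
This gives both implications at once.

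\emph{Remaining points.} The argument uses only that $|\mu|$ is a finite positive Borel measure of total mass $1$. In particular it never uses compactness of $L$, which is why the $C(K)$ proof of \cite[Lemma 2.5]{AGR14} carries over verbatim to $C_0(L)$. I expect no real obstacle here. The only points needing a line of justification are that the identity $d\mu=h\,d|\mu|$ holds for the Hahn sign function, and that the conclusion does not depend on the choice of Hahn decomposition. The latter holds because any two Hahn decompositions differ by a $|\mu|$-null set.
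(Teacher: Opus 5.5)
Your proof is correct. It is essentially the standard argument behind \cite[Lemma 2.5]{AGR14}, which the paper cites without proof, remarking (as you do) that compactness of the underlying space is never used. Writing $d\mu=(\chi_P-\chi_N)\,d|\mu|$ and taking the equality case of $\int_L (1-fh)\,d|\mu|\geq 0$ settles both directions; since this works for any fixed Hahn decomposition, the independence remark is not even needed.
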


\begin{proof}[Proof of Theorem \ref{prop-real-CK}]
Let $\xi=\varepsilon/5$ and set
$$V= \left\{ t: |f_0(t)|\ge 1-\frac{\xi}{3}\right\}\quad\text{and}\quad U= \{ t \colon |f_0(t)|\le 1-\xi\}.$$

From the assumption, we have
\[1-\frac{\varepsilon^2}{50}\le \int_{V^c} f_0d\mu + \int_{V} f_0d\mu \le \left(1-\frac\xi3\right)|\mu|(V^c)+ |\mu|(V)=1-\frac{\xi}3|\mu|(V^c).\]

Hence, we get $|\mu|(V^c)\le \frac{3\varepsilon}{10}<1$, which implies that $V$ is not empty.

For any subset $A\subset L$, we denote 
$$A^+=\{t\in A\colon f_0(t)\geq 0\}\quad\text{and}\quad A^-=A\backslash A^+.$$

Let $P_\mu$ and $N_\mu$ be the positive and negative parts of $\mu$ repectively in a Hahn decomposition of $\mu$. Then, we have
\begin{align*} 1-\frac{\epsilon^2}{50}&\le \mu(f_0) = \int_{P_\mu^+\cup N_\mu^-} f_0 d\mu + \int_{(P_\mu^+\cup N_\mu^-)^c} f_0 d\mu \\
&=\int_{P_\mu^+\cup N_\mu^-} |f_0| d|\mu| - \int_{(P_\mu^+\cup N_\mu^-)^c}|f_0| d|\mu|\\
&\le |\mu|(P_\mu^+\cup N_\mu^-).
\end{align*}
This gives $|\mu|((P_\mu^+\cup N_\mu^-)^c)\le \frac{\varepsilon^2}{50}$, and so we get
\begin{align*}
|\mu|(V\cap (P_\mu^+\cup N_\mu^-))
&= |\mu|(V)- |\mu|(V\setminus (P_\mu^+\cup N_\mu^-))\\
&\ge |\mu|(V)- |\mu|((P_\mu^+\cup N_\mu^-)^c)\\
&\ge 1-\frac{3\varepsilon}{10}-\frac{\varepsilon^2}{50}>0.
\end{align*}

 Define the regular Borel measure $\nu$ by \[\nu = \frac{1}{ |\mu|(V\cap ({P_\mu^+\cup N_\mu^-}))}\mu|_{V\cap ({P_\mu^+\cup N_\mu^-})}.\] 
Then, it holds that
\begin{align*}
\|\nu-\mu\| & \le \left(\frac{1}{ |\mu|(V\cap ({P_\mu^+\cup N_\mu^-}))}-1\right)|\mu|(V\cap ({P_\mu^+\cup N_\mu^-})) + |\mu|\left((V\cap ({P_\mu^+\cup N_\mu^-}))^c\right)\\
&\le 2\left(1-|\mu|(V\cap ({P_\mu^+\cup N_\mu^-}))\right)\le \frac{6\varepsilon}{10}+\frac{2\varepsilon^2}{50}<\varepsilon.
\end{align*} 

By the assumption $|\mu(Tf_0)|=1$ and Lemma~\ref{lem:norm1C0L}, we see that
$cTf_0=1$ $|\mu|$-a.e. on $P_\mu$ and $cTf_0=-1$ $|\mu|$-a.e. on $N_\mu$ for some constant $c$ of modulus $1$.
Since $|\nu|$ is absolutely continuous with respect to $|\mu|$, we see $cTf_0=1$ $|\nu|$-a.e. on $P_\mu$ and $cTf_0=-1$ $|\nu|$-a.e. on $N_\mu$. From the construction of $\nu$, $P_\mu$ and $N_\mu$ are also a Hahn decomposition for $\nu$, and it leads us to get $|\nu(Tf_0)|=1$.

We now construct $f_1\in S_{C_0(L)}$. Using Urysohn's lemma, take $\phi\in S_{C_0(L)}$ such that 
$$0\leq \phi\leq 1,~\phi(t)=1 ~\text{for}~t\in V,~\phi(t)=0~\text{for}~t\in U,$$
and define functions $\psi,g\in S_{C_0(L)}$ by
\begin{align*}
\psi(t)&=\frac{f_0(t)}{|f_0(t)|}\phi(t)~\text{for}~t\in U^c~\text{and}~\psi(t)=0~\text{for}~t\in U \text{~and}\\
g(t)&=\psi(t)+(1-\phi(t))f_0(t)~\text{for}~t\in L.\end{align*}

From Lemma~\ref{lem:norm1C0L} and the constructions of $\nu$ and $g$, we have that 
$$\nu(g)=1=\|g\|\text{~and~}\|g-f_0\|<\xi.$$

 Moreover, it also holds that $\sign(f_0(t))=\sign(g(t))$ for every $t\in L$ where $\sign(\theta) = \frac{\theta}{|\theta|}$ for each nonzero $\theta\in \mathbb{R}\setminus\{0\}$ and $\sign(0)=0$. Hence, the following function $f_1$ is well defined:
$$f_1(t)=\begin{cases}
\max\{f_0(t), g(t)\},\quad &\text{if }\sign(f_0(t))\geq 0\\
\min\{f_0(t), g(t)\},\quad &\text{if }\sign(f_0(t))< 0
\end{cases}.$$

Clearly, it holds that
$$\|f_1\|=1,~\|f_1-f_0\|\leq \|g-f_0\|<\varepsilon~\text{and}~\nu(f_1)=\nu(g)=1.$$
In order to finish the proof, we check that $|\nu(Tf_1)|=1$. This follows from the inequality $\|2f_0-f_1\|\leq 1$ which is a consequence of $|f_1|\geq |f_0|$ and $\sign(f_0(t))=\sign(f_1(t))$, and the equality $f_0=\frac{1}{2}\left((2f_0-f_1)+f_1\right)$.
\end{proof}

From Theorems \ref{nuimpliesBPBPnu}, \ref{prop-real-CK}, and Corollary \ref{C0BPBZP}, we deduce the main result of this section.

\begin{corollary}\label{cor:real-ell-infty-BPBp-nu}
The real Banach space $\ell_\infty$ has the BPBp-nu.
\end{corollary}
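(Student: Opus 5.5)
The plan is to apply Theorem \ref{nuimpliesBPBPnu} to $X=\ell_\infty$ (real). That theorem has three hypotheses: $n(X)=1$, the pair $(X,X)$ has the BPBZp, and $X$ has property (nu). I will check them one at a time.

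First, identify the real space $\ell_\infty$ isometrically with $C(\beta\mathbb{N})$. Here $\beta\mathbb{N}$ is a compact Hausdorff space, hence locally compact, so $\ell_\infty$ is a real $C_0(L)$ space with $L=\beta\mathbb{N}$. Since $C_0(L)$ spaces have numerical index $1$ (recalled in the introduction: isometric preduals of $L_1$ spaces, in particular $C_0(L)$, have $n=1$), we get $n(\ell_\infty)=1$.

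Second, for the BPBZp of $(\ell_\infty,\ell_\infty)$, use Corollary \ref{C0BPBZP} with $L=\beta\mathbb{N}$. It says $(C(\beta\mathbb{N}),\ell_\infty)$ has the BPBZp, which is exactly $(\ell_\infty,\ell_\infty)$ after the isometric identification. The BPBZp is clearly invariant under isometric isomorphisms of the domain: compose operators with the isometry and pull back points. Recall that Corollary \ref{C0BPBZP} itself comes from Proposition \ref{C0AHP} combined with Theorem \ref{thm:AHpimpliesBPBZp}, and both hold in the real case.

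Third, property (nu) for real $\ell_\infty=C(\beta\mathbb{N})$ is the final assertion of Theorem \ref{prop-real-CK}. Strictly speaking, that theorem yields $|\nu(Tf_1)|=\nu(f_1)=1$ with the same $T$, and property (nu) then holds with $S=T$, $\eta(\eps)=\eps^2/50$ for small $\eps$. With all three hypotheses in place, Theorem \ref{nuimpliesBPBPnu} gives the BPBp-nu for $\ell_\infty$.

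There is no real obstacle here. The only points needing care are that every cited result is valid in the real setting, which is where Theorem \ref{prop-real-CK} requires it, and the routine transfer of the properties through the isometry $\ell_\infty\cong C(\beta\mathbb{N})$.
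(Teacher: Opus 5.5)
Your proposal is correct and follows the same route as the paper, which deduces the corollary from Theorem~\ref{nuimpliesBPBPnu}, Theorem~\ref{prop-real-CK} and Corollary~\ref{C0BPBZP} applied with $L=\beta\mathbb{N}$, using $n(\ell_\infty)=1$. Your extra observation that Theorem~\ref{prop-real-CK} yields property (nu) with $S=T$ and $\eta(\eps)=\eps^2/50$ is exactly how that theorem's final assertion is meant to be read.
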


As noted in Sections \ref{section:intro} and \ref{section:bpbzp-and-bpbpnu}, it remains an open question whether the Stone-\v{C}ech compactification $\beta \mathbb{N}$ of $\mathbb{N}$ admits local compensation. Since $C(\beta \mathbb{N})$ is isometrically isomorphic to $\ell_\infty$, an affirmative answer to this question would immediately yield Corollary \ref{cor:real-ell-infty-BPBp-nu} in an alternative way, while a negative answer would solve question (b) of Section 4.3 of \cite{AGR14} in the negative.

\begin{question}Does the Stone-\v{C}ech compactification $\beta \N$ of $\N$ admit local compensation?
\end{question}

\section{BPBZp and BPBp-nu for compact operators}\label{section:cpt}
Numerous authors have investigated versions of the Bishop-Phelps-Bollob\'as type properties for compact operators, paralleling the problem of approximating compact operators by those which attain their norm (or numerical radius). This section is devoted to providing several remarks regarding our results for compact operators. To maintain brevity, we omit most of the details as the proofs follow by analogous arguments. We begin by recalling the definitions of these properties for compact operators for the sake of completeness.

\begin{definition}
~

\begin{itemize}
\item A pair $(X, Y)$ of Banach spaces is said to have the \textit{Bishop-Phelps-Bollob\'as property for compact operators} (\textit{BPBp for compact operators} for short) if, for every $\varepsilon > 0$, there exists $\eta(\varepsilon) > 0$ such that whenever $T \in \mathcal{K}(X,Y)$ with $\|T\| = 1$ and $x_0 \in S_X$ satisfy 
\[
\|T x_0\| > 1 - \eta(\varepsilon),
\]
there exist $x_1 \in S_X$ and $S \in \mathcal{K}(X,Y)$ such that 
\[
\|S\| = \|S x_1\| = 1, \quad \|x_1 - x_0\| < \varepsilon, \quad \text{and} \quad \|S-T\| < \varepsilon.
\]

\item A pair $(X, Y)$ of Banach spaces is said to have the \textit{Bishop-Phelps-Bollob\'as-Zizler property for compact operators} (\textit{BPBZp for compact operators} for short) if, for every $\varepsilon > 0$, there exists $\eta(\varepsilon) > 0$ such that whenever $T \in \mathcal{K}(X,Y)$ with $\|T\| = 1$, $y_0^* \in S_{Y^*}$, and $x_0 \in S_X$ satisfy 
\[
|y_0^*(T x_0)| > 1 - \eta(\varepsilon),
\]
there exist $y_1^* \in S_{Y^*}$, $x_1 \in S_X$, and $S \in \mathcal{K}(X,Y)$ such that 
\[
\|S\| = |y_1^*(S x_1)| = 1, \quad \|x_1 - x_0\| < \varepsilon, \quad \|y_1^* - y_0^*\| < \varepsilon, \quad \text{and} \quad \|S-T\| < \varepsilon.
\]

\item A Banach space $X$ is said to have the \textit{(resp. weak) Bishop-Phelps-Bollob\'as property for numerical radius for compact operators} (\textit{(resp. weak) BPBp-nu for compact operators} for short) if, for every $0<\eps<1$, there exists $\eta(\eps)>0$ such that
whenever $T\in \mathcal{K}(X)$ and $(x_0, x_0^*)\in \Pi(X)$ satisfy  
$$\nu(T)=1 \text{~and~} |x_0^*Tx_0|>1-\eta(\eps),$$
there exist $S\in \mathcal{K}(X)$ and $(x_1, x_1^*)\in \Pi(X)$ such that
\[
~\quad \quad  \quad\nu(S) = |x_1^*Sx_1|=1~(\text{resp.}~\nu(S) = |x_1^*Sx_1|), \ \ \|S-T\|<\eps, \ \ \|x_1-x_0\|<\eps,\ \ \text{and} \ \ \|x_1^*- x_0^*\|<\eps.
\]
\item A Banach space $X$ is said to have \emph{property (nu) for compact operators} if for every $\eps>0$ there exists $\eta(\eps)>0$ such that whenever $T\in S_{\mathcal{K}(X)}$, $x_0\in S_X$ and $x_0^*\in S_{X^*}$, satisfy $$\re x_0^*(x_0)>1-\eta(\eps)\quad \text{ and } \quad \left|x_0^*Tx_0\right|=1,$$ then there exist $x_1^*\in S_{X^*}$, $x_1\in S_X$ and $S\in S_{\mathcal{K}(X)}$ satisfying that  $$\left|x_1^*Sx_1\right|=x_1^*(x_1)=1\quad \text{ and } \quad \max\{\|x_1-x_0\|,\|x_1^*-x_0^*\|,\|S-T\| \}<\eps.$$
\end{itemize}
\end{definition}
One can readily verify that all the results in Section \ref{section:bpbzp-and-bpbpnu} remain valid for compact operators. For the sake of brevity, we summarize them here without providing details.

\begin{theorem}\label{thm1cpt}
Let $X$ and $Y$ be Banach spaces. 
\begin{enumerate}
\item If $Y$ is uniformly smooth and the pair $(X,Y)$ has the BPBp for compact operators, then $(X,Y)$ has the BPBZp for compact operators.
\item If $n(X)=1$ and $X\oplus_1 Y$ has the weak BPBp-nu for compact operators with a function $\eta$, then $(X, Y)$ has the BPBZp for compact operators with the function $\gamma\colon \eps\mapsto\eta\left(\frac{\eps}{4+\eps}\right)$ for $0<\eps<1$. 
\item If $n(Y)=1$ and $X\oplus_\infty Y$ has the weak BPBp-nu for compact operators with a function $\eta$, then $(X, Y)$ has the BPBZp for compact operators with a function $\gamma\colon \eps\mapsto\eta\left(\frac{\eps}{4+\eps}\right)$ for $0<\eps<1$.
\end{enumerate}
\end{theorem}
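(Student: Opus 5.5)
The plan is to rerun the proofs of Proposition \ref{uniformlysmoothBPBp}, Theorem \ref{thm1} and Theorem \ref{thm2} almost verbatim. At each step I will check that every operator produced from a compact one is again compact. The guiding observation is that all new operators in those proofs are built from the given operator by composing with bounded maps, rescaling, or adding rank-one perturbations. Each of these operations preserves $\mathcal{K}$.

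\emph{Item (1).} I will take the BPBp for compact operators with function $\eta$ and keep the same function $\gamma$ as in Proposition \ref{uniformlysmoothBPBp}. Starting from $T\in\mathcal{K}(X,Y)$, the hypothesis yields a compact $S$ and $x_1$ with $\|S\|=\|Sx_1\|=1$. The rest of that proof only modifies the functional, replacing $y_0^*$ by $y_1^*=\overline{\theta}z_1^*$, and uses uniform convexity of $Y^*$. Since $S$ is never altered, nothing further needs checking.

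\emph{Item (2).} I will follow the proof of Theorem \ref{thm1} step by step.
\begin{itemize}
\item If $T$ is compact, then $E_Y\circ T\circ P_X$ is compact. The weak BPBp-nu for compact operators therefore applies and gives a compact $R$.
\item $Q=\nu(R)^{-1}R$ is compact.
\item The auxiliary operators $H_p$ are used only to estimate norms through $n(X)=1$. That equality holds for all bounded operators, so their compactness is irrelevant.
\item The final operator is $S=P_Y\circ Q\circ E_X$ plus a rank-one operator, so it is compact.
\end{itemize}
The estimates are identical, so the same $\gamma(\eps)=\eta\bigl(\frac{\eps}{4+\eps}\bigr)$ works.

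\emph{Item (3).} The argument is the same, based on Theorem \ref{thm2}.
\begin{itemize}
\item $E_Y\circ T\circ P_X$ is compact, and so is $Q$.
\item The $H_{p,q}$ again serve only as norm-estimating devices through $n(Y)=1$.
\item The final operator is $Sx=P_Y\circ Q(x,x_2^*(x)y_1)$, that is, $S=P_Y\circ Q\circ J$ with $J\colon X\to X\oplus_\infty Y$ bounded, $Jx=(x,x_2^*(x)y_1)$. Hence $S$ is compact.
\end{itemize}
The step that uses the Bishop--Phelps theorem to replace $y_0^*$ by a norm-attaining $z^*$ involves only functionals and is unaffected.

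The only point needing real care is to confirm that the norm computations showing $\nu(E_Y\circ T\circ P_X)=\|T\|$ use only the structure of $\Pi(X\oplus Y)$, so that they hold for compact $T$. They do, since compactness plays no role in them. I do not expect any genuine obstacle.
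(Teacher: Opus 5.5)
Your proposal is correct and takes the same route as the paper. The paper gives no separate proof: it only remarks that the arguments of Proposition~\ref{uniformlysmoothBPBp} and Theorems~\ref{thm1} and~\ref{thm2} carry over verbatim to compact operators. Your check is exactly the verification it leaves to the reader: every operator actually produced stays compact, namely $E_Y\circ T\circ P_X$, $Q=\nu(R)^{-1}R$, and the final $S$ (either $P_Y\circ Q\circ E_X$ plus a rank-one term, or $P_Y\circ Q\circ J$). Meanwhile the auxiliary $H_p$ and $H_{p,q}$ enter only through $n(X)=1$ or $n(Y)=1$, which hold for all bounded operators.
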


By analogous proofs, the following well-known facts for general operators carry over to the case of compact operators:  (1) if $X$ is uniformly convex, then $(X, Y)$ has the BPBp for every Banach space $Y$ (see \cite{KL14a}); (2) all pairs of the form $(L_1(\mu), L_q(\nu))$ have the BPBp whenever $1 < q < \infty$ for arbitrary measures $\mu$ and $\nu$ (see \cite{CKLM14}); (3) if $L$ is a locally compact Hausdorff space and $Y$ is $\mathbb{C}$-uniformly convex (resp. uniformly convex), then the pair $(C_0(L), Y)$ has the BPBp for complex (resp. real) Banach spaces (see \cite{Acosta16}); (4) the space $L_1(\mu)$ has the BPBp-nu for every measure $\mu$ (see \cite{KLM14}). Furthermore, in contrast to the general operator setting, it is known that $C_0(L)$ has the BPBp-nu for compact operators for any locally compact Hausdorff space $L$ (see \cite{GMMR21}). Hence, we conclude the following from Theorem \ref{thm1cpt}.

\begin{corollary}
For the following Banach spaces $X$ and $Y$, the pair $(X,Y)$ has the BPBZp for compact operators.
\begin{enumerate}
\item $X=L_1(\mu)$ and $Y=L_1(\nu)$ for  arbitrary measures $\mu$ and $\nu$.
\item $X=L_p(\mu)$ and $Y=L_q(\nu)$ for $1\leq p \leq \infty$, $1<q<\infty$, and arbitrary measures $\mu$ and $\nu$.
\item $X=C_0(L)$ and $Y=L_q(\nu)$ for $1<q<\infty$, arbitrary measure $\nu$, and arbitrary locally compact Hausdorff space $L$.
\item $X=C_0(L)$ and $Y=C_0(M)$ for arbitrary locally compact Hausdorff spaces $L$ and $M$.
\end{enumerate}
\end{corollary}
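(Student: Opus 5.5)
The plan is to derive each item from Theorem~\ref{thm1cpt} combined with the compact-operator versions of the known results listed just before the statement.

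\emph{Item (1).} By fact (4), $L_1(\mu)\oplus_1 L_1(\nu)$ has the BPBp-nu for compact operators. This holds because this $\ell_1$-sum is again an $L_1$ space, namely $L_1$ of the disjoint-union measure. In particular it has the weak BPBp-nu for compact operators. Since $n(L_1(\mu))=1$, Theorem~\ref{thm1cpt}(2) gives the BPBZp for compact operators for the pair $(L_1(\mu),L_1(\nu))$, exactly as in Corollary~\ref{L1BPBZP}.

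\emph{Item (4).} The $\ell_\infty$-sum $C_0(L)\oplus_\infty C_0(M)$ is isometric to $C_0(L\sqcup M)$, where $L\sqcup M$ is the disjoint union, again locally compact Hausdorff. By \cite{GMMR21}, $C_0(L\sqcup M)$ has the BPBp-nu for compact operators. Since $n(C_0(M))=1$, Theorem~\ref{thm1cpt}(3) yields the claim.

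\emph{Items (2) and (3).} Here $1<q<\infty$, so $L_q(\nu)$ is uniformly smooth, and by Theorem~\ref{thm1cpt}(1) it suffices to establish the BPBp for compact operators for the relevant pair.
\begin{itemize}
\item For $X=L_p(\mu)$ with $1<p<\infty$, the space $X$ is uniformly convex, so fact (1) applies.
\item For $p=1$, use fact (2).
\item For $p=\infty$, note that $L_\infty(\mu)$ is isometric to some $C(K)$. Then use fact (3) with $Y=L_q(\nu)$. This $Y$ is uniformly convex, and also $\C$-uniformly convex in the complex case, since uniform convexity implies $\C$-uniform convexity.
\item For item (3), fact (3) applies directly with the same $Y$.
\end{itemize}

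The main point needing care is the $p=\infty$ case. Facts (2) and (3) are stated as compact versions only "by analogous proofs," and we rely on the identification $L_\infty(\mu)\cong C(K)$ together with $\C$-uniform convexity of $L_q$. One also has to check that each approximating operator $S$ produced in the proofs stays compact. This holds because the constructions in Theorem~\ref{thm1cpt} modify $T$ by compositions with projections and rank-one perturbations only.
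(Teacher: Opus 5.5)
Your proposal is correct and follows the paper's route. Items (1) and (4) come from Theorem~\ref{thm1cpt}(2)--(3), applied to the $L_1$ space $L_1(\mu)\oplus_1 L_1(\nu)$ and to $C_0(L)\oplus_\infty C_0(M)\cong C_0(L\sqcup M)$ via fact (4), \cite{GMMR21} and numerical index one; items (2) and (3) come from Theorem~\ref{thm1cpt}(1), via uniform smoothness of $L_q(\nu)$ and the compact versions of the BPBp results listed before the statement, and your explicit handling of $p=\infty$ through $L_\infty(\mu)\cong C(K)$ spells out a step the paper leaves implicit.
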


With the exception of Theorem \ref{thm:AHpimpliesBPBZp}, every result in Section \ref{section-BPBZp-pair-classical} remains valid for compact operators via analogous arguments. In particular, we point out the characterization of spaces $Y$ such that $(L_1(\mu), Y)$ has the BPBZp for compact operators is provided by the $Y^*$-AHSp. In the proof of \cite[Theorem 2.6]{ABCGKLM14}, which underlies the proof of Theorem \ref{L1thm}, the RNP was required to represent a given operator as a countably vector-valued measurable function. However, since every compact operator admits such a representation, the RNP on $Y$ is no longer necessary in this context.

The following theorem and corollary are the compact operator versions of Theorems \ref{L1thm}, \ref{thm:c0newAHSp}, \ref{thm:inftyc0} and Corollary \ref{KLMstrengthen}.

\begin{theorem}\label{l1characcpt}
Let $X$ and $Y$ be Banach spaces.
\begin{enumerate}
\item Whenever $\mu$ is a measure such that $L_1(\mu)$ is infinite dimensional, the pair $(L_1(\mu),Y)$ has the BPBZp for compact operators if and only if $Y$ has the $Y^*$-AHSp.
\item The pair $(X,c_0)$ has the BPBZp for compact operators if and only if $X^*$ has $X$-AHSp.
\item If the pair $(X,\ell_\infty)$ has the BPBZp for compact operators, then so does the pair $(X,c_0)$.
\end{enumerate}
\end{theorem}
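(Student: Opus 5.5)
We will prove the three items separately, in each case rerunning the argument for general operators and checking that every operator we build stays compact.

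\emph{Item (1).} For necessity, we rerun the proof of Theorem \ref{L1thm}(1), i.e.\ the modification of \cite[Theorem 2.4]{ABCGKLM14}. Given $\{y_k\}\subset B_Y$, $y^*\in S_{Y^*}$ and a finitely supported $a\in S_{\ell_1}$ (enough by Remark \ref{remAHSp}), we choose disjoint sets $E_1,\dots,E_n$ of positive finite measure with weights proportional to $a_k$. We then set $Tf=\sum_k \frac{1}{\mu(E_k)}\bigl(\int_{E_k} f\,d\mu\bigr) y_k$ and $f_0$ a normalized combination of the $\chi_{E_k}$. This $T$ has finite rank, so it is compact, and the BPBZp for compact operators applies to it. The extraction of $A$, $\{z_k\}$ and $z^*$ from the approximating pair uses only the structure of $L_1$, not compactness of $S$, so it goes through unchanged.

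For sufficiency, we follow the modification of \cite[Theorem 2.6]{ABCGKLM14} in Theorem \ref{L1thm}(2). The RNP was used there only to represent $T\in\mathcal{L}(L_1(\mu),Y)$ by a bounded measurable function. We would first reduce to $\sigma$-finite $\mu$ exactly as in the proof of Theorem \ref{L1thm}, noting that $S_1\circ P+T\circ(\Id-P)$ is compact when $S_1$ and $T$ are. For compact $T$ the image of $B_{L_1}$ is relatively compact, so $T$ is representable, with essential range in a compact set (Dunford--Pettis style, via separability of the range). We would then approximate $T$ by an operator of the form $f\mapsto\sum_k\frac{1}{\mu(E_k)}\int_{E_k}f\,d\mu\, y_k$, which is compact.

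Next we apply the $Y^*$-AHSp to the vectors $y_k$ and the functional $y_0^*$, and correct the $y_k$ for $k\in A$ to the $z_k$. The resulting operator is again a conditional expectation onto countably many sets followed by a map into $\overline{\{z_k\}}\cup\overline{\{y_k\}}$. The main obstacle is ensuring this set is relatively compact. Each $z_k$ is within $\eps$ of $y_k$, and the $y_k$ lie in a compact set, but the $z_k$ themselves need not. We would try to handle this by first approximating $T$ by a finite-rank operator, choosing a finite partition (possible because $T(B_{L_1})$ is totally bounded), so that only finitely many $y_k$ occur. The constructed $S$ then has finite rank.

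\emph{Item (2).} We redo \cite[Proposition 3.5]{CKLM15}. An operator $T\in\mathcal{L}(X,c_0)$ corresponds to a sequence $(T_i)\subset X^*$ with $T_i\to 0$ in norm exactly when $T$ is compact. Necessity: the test operators built from finitely many functionals in that proof have finite rank. Sufficiency: the construction modifies only the coordinates $T_i$ in a finite set of near-norming indices (finitely many, since $T_i\to0$) and leaves the tail unchanged. So the resulting $S$ still has $S_i\to0$ and is compact.

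\emph{Item (3).} We repeat the proof of Theorem \ref{thm:inftyc0}. There $P_{n_0}\circ T$ has finite rank, hence is compact, so the hypothesis yields a compact $Q$. Then $S=P_{n_0}\circ Q+(\Id_{c_0}-P_{n_0})\circ T$ is compact, since both summands are compact and $S$ maps into $c_0$. The remaining estimates are verbatim.
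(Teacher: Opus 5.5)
Your proposal is correct and follows the paper's route: rerun the general-operator proofs of Theorems \ref{L1thm}, \ref{thm:c0newAHSp} and \ref{thm:inftyc0} and check that every operator produced is compact. Your finite-partition step in the sufficiency part of (1) goes further than the paper's one-line remark about representability of compact operators: you approximate $T$ on finitely many sets, using total boundedness of $T(B_{L_1(\mu)})$, so the corrected operator has finite rank, which settles a compactness issue the paper leaves unaddressed, since corrected vectors $z_k$ over a countable partition need not form a relatively compact set.

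One slip is a normalization: with $f_0=\sum_k a_k\chi_{E_k}/\mu(E_k)$ the operator should be $f\mapsto\sum_k\bigl(\int_{E_k}f\,d\mu\bigr)y_k$, without the factor $1/\mu(E_k)$, so that $\|T\|\leq 1$ and $Tf_0=\sum_k a_k y_k$.
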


\begin{corollary}\label{KLMstrengthencpt}
For a locally compact Hausdorff space $L$ and a measure $\mu$, assume the spaces $L_1(\mu)$ and $C_0(L)$ are infinite dimensional.
\begin{enumerate}
\item The pair $(L_1(\mu),C_0(L))$ fails the BPBZp for compact operators.
\item The spaces $L_1(\mu)\oplus_1 C_0(L)$ and $L_1(\mu)\oplus_\infty C_0(L)$ fail the BPBp-nu  for compact operators.
\end{enumerate}
\end{corollary}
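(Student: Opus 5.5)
The plan is to derive Corollary~\ref{KLMstrengthencpt} from the compact-operator analogues stated just before it, mirroring the proof of Corollary~\ref{KLMstrengthen}.

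For part (1), we apply Theorem~\ref{l1characcpt}(1) with $Y=C_0(L)$. Since $L_1(\mu)$ is infinite dimensional, the pair $(L_1(\mu),C_0(L))$ has the BPBZp for compact operators if and only if $C_0(L)$ has the $C_0(L)^*$-AHSp. Because $C_0(L)$ is infinite dimensional, $L$ is an infinite locally compact Hausdorff space. Example~\ref{CLexample} then says that $C_0(L)$ fails the $C_0(L)^*$-AHSp. This is a statement purely about the space, not about operators, so it applies verbatim. Hence the pair fails the BPBZp for compact operators.

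For part (2), we argue by contraposition using Theorem~\ref{thm1cpt}. Suppose $L_1(\mu)\oplus_1 C_0(L)$ has the BPBp-nu for compact operators. Then it has the weak version, and since $n(L_1(\mu))=1$, Theorem~\ref{thm1cpt}(2) yields the BPBZp for compact operators for $(L_1(\mu),C_0(L))$. This contradicts (1).

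Similarly, suppose $L_1(\mu)\oplus_\infty C_0(L)$ has the BPBp-nu for compact operators. Since $n(C_0(L))=1$, Theorem~\ref{thm1cpt}(3) again gives the BPBZp for compact operators for $(L_1(\mu),C_0(L))$, a contradiction.

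The only point needing care is the reliance on Theorem~\ref{l1characcpt}(1), specifically the necessity direction (BPBZp for compact operators implies the $Y^*$-AHSp). I would check that the sequences of operators built in that argument, adapted from \cite[Theorem 2.4]{ABCGKLM14}, can be taken compact. The natural choice is finite-rank operators on $L_1(\mu)$ supported on finitely many disjoint sets, since the $Y^*$-AHSp only needs to be tested on finitely supported $a$ (Remark~\ref{remAHSp}). With that choice the compact version follows exactly as in the general case.
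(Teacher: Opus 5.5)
Your proposal is correct and follows essentially the route the paper intends. Part (1) comes from Theorem~\ref{l1characcpt}(1) together with Example~\ref{CLexample}, using that $C_0(L)$ infinite dimensional forces $L$ infinite. Part (2) follows from (1) via Theorem~\ref{thm1cpt}(2),(3), with $n(L_1(\mu))=n(C_0(L))=1$, exactly mirroring Corollary~\ref{KLMstrengthen}. Your extra check that the test operators in the necessity direction can be taken finite-rank (finitely supported $a$, as allowed by Remark~\ref{remAHSp}) is consistent with the paper's claim that the arguments of Section~\ref{section-BPBZp-pair-classical} carry over to compact operators.
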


It is worth noting that Proposition \ref{prop:nra-dense-complex-l1c0} holds for compact operators. Similar to the case of general operators, the complex Banach space $\ell_1\oplus_\infty c_0$ is the first example constructed without the renormings which fails BPBp-nu for compact operators such that the set of numerical radius attaining compact operators is dense in the space of compact operators.
\begin{proposition}
For the complex Banach space $X=\ell_1\oplus_\infty c_0$, the set of numerical radius attaining compact operators is dense in $\mathcal{K}(X)$,  but $X$ fails BPBp-nu for compact operators. 
\end{proposition}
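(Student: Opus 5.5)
The statement has two parts, and I will prove them separately: the density of numerical radius attaining compact operators, and the failure of the BPBp-nu for compact operators.

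\emph{Failure of the BPBp-nu for compact operators.} Here I follow the general-operator route. By Example~\ref{CLexample} with $L=\N$, the space $c_0$ fails the $\ell_\infty$-AHSp. The compact version of Theorem~\ref{l1charac} then shows that $(\ell_1,c_0)$ fails the BPBZp for compact operators. This version is a special case of Theorem~\ref{l1characcpt}(1) with $\mu$ the counting measure on $\N$. Since $n(c_0)=1$, Theorem~\ref{thm1cpt}(3) yields that $\ell_1\oplus_\infty c_0$ fails even the weak BPBp-nu for compact operators, and in particular fails the BPBp-nu for compact operators. This is also contained in Corollary~\ref{KLMstrengthencpt}(2).

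\emph{Density.} I will adapt the proof of Proposition~\ref{prop:nra-dense-complex-l1c0} in two steps.

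First, I show that norm attaining compact operators are dense in $\mathcal{K}(X)$. Write $T\in\mathcal{K}(X)$ as $T=(P_{\ell_1}T,\,P_{c_0}T)$; both components are compact. To handle these components simultaneously, I avoid \cite{PayaSaleh} and work directly:
\begin{itemize}
\item Compactness of $T$ gives $\|P_{c_0}T-Q_mP_{c_0}T\|\to0$ and $\|P_{\ell_1}T-R_mP_{\ell_1}T\|\to0$, where $Q_m$ and $R_m$ are the projections onto the first $m$ coordinates of $c_0$ and $\ell_1$.
\item Hence $T$ is approximated by a finite-rank operator with range in $F=\ell_1^m\oplus_\infty\ell_\infty^m$.
\item Similarly, on the domain side, $T^*$ is compact, so $\|T-T\circ E_n\circ P_n\|$ is small for large $n$. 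This is because $T^*$ maps $B_{X^*}$ into a relatively compact set whose $\ell_1$-tail (the $c_0^*$ part) is uniformly small.
\end{itemize}
The result is an operator $T_0=T_0\circ E_n\circ P_n$ close to $T$, with range in the finite-dimensional space $F$, and factoring through $Z=\ell_1\oplus_\infty\ell_\infty^n$. The space $Z$ has the RNP, so by Bourgain \cite{Bourgain77} there is a norm attaining $R\in\mathcal{L}(Z,F)$ close to $T_0\circ E_n$. Its range lies in $F$, so $R$ is finite-rank, hence compact. Then $S=R\circ P_n$ is compact, norm attaining, and close to $T$.

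The main obstacle is justifying the tail estimate $\|T\circ(\Id_X-E_nP_n)\|\to0$ from compactness alone. I would do it via the compactness of $T^*$. Write $T^*(B_{X^*})\subset \ell_\infty\oplus_1\ell_1$; by relative compactness, the $\ell_1$-component of $T^*(B_{X^*})$ has uniformly small tails beyond some $n$. Because $\|T(\Id_X-E_nP_n)\|=\|(\Id-E_nP_n)^*T^*\|$, this gives the estimate. This replaces the $\C$-uniform convexity argument used in Proposition~\ref{prop:nra-dense-complex-l1c0}.

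Second, I pass from norm attainment to numerical radius attainment. The last paragraph of the proof of Proposition~\ref{prop:nra-dense-complex-l1c0} applies verbatim: $n(X)=1$, and a norm attaining operator attains its numerical radius. That argument uses no property of the operator other than norm attainment, so every norm attaining compact operator lies in $\nra(X)\cap\mathcal{K}(X)$. Combined with the first step, this proves the density.
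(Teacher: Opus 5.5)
Your proof is correct. The failure part is the paper's own route (Corollary~\ref{KLMstrengthencpt}(2) with $\mu$ the counting measure and $L=\N$). There is one slip in it. Example~\ref{CLexample} says that $c_0$ fails the $c_0^*$-AHSp, i.e.\ the $\ell_1$-AHSp, and that is what Theorem~\ref{l1characcpt}(1) needs. ``$\ell_\infty$-AHSp'' is not meaningful for $c_0$, since $\ell_\infty$ is not a subspace of $c_0^*$. The statement about $\ell_\infty$, namely that $\ell_\infty$ fails the $\ell_1$-AHSp, is a different fact. It gives the same conclusion that $(\ell_1,c_0)$ fails the BPBZp for compact operators, but through Theorem~\ref{l1characcpt}(2).

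For the density, you take a genuinely different route from the one the paper intends. The paper only asserts that the proof of Proposition~\ref{prop:nra-dense-complex-l1c0} carries over, which involves four steps:
\begin{itemize}
\item split the range with the Pay\'a--Saleh lemma into the $\ell_1$- and $c_0$-components;
\item treat the $c_0$-component by property~$\beta$;
\item for the $\ell_1$-component, use the $\C$-uniform convexity of $\ell_1$ to show that $\|T-T\circ E_n\circ P_n\|$ is small;
\item apply Bourgain's theorem on the RNP space $\ell_1\oplus_\infty\ell_\infty^n$.
\end{itemize}
In the compact setting one must also check that each of these perturbations preserves compactness.

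You instead let compactness do all the truncation. Relative compactness of $T(B_X)$ yields a finite-dimensional range $F$. Relative compactness of $T^*(B_{X^*})$ gives uniformly small $\ell_1$-tails, hence $\|T(\Id_X-E_nP_n)\|=\|(\Id_X-E_nP_n)^*T^*\|\to0$. Bourgain's theorem applied in $\mathcal{L}(Z,F)$ then automatically produces a finite-rank approximant. Your argument is therefore shorter and self-contained: it needs neither Pay\'a--Saleh nor property~$\beta$, requires no compactness-preservation checks, and never uses the complex structure. The paper's argument has the advantage of covering all of $\mathcal{L}(X)$, where no compactness-driven truncation is available and the $\C$-uniform convexity of $\ell_1$ is what supplies it. The final step, that norm attainment implies numerical radius attainment on this $X$, is the same in both.
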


The same happens with the results of Section \ref{section:sufficient-condition-BPBpnu}. Regarding property (nu) for compact operators, the previous results can be straightforwardly adapted. Notably, since $C_0(L)$ satisfies the BPBp-nu for compact operators for every locally compact Hausdorff space $L$, it follows that $C_0(L)$ has property (nu) for compact operators in both the real and complex settings. We provide a summary of these results.

\begin{theorem}\label{nuimpliesBPBPnucpt} Let $X$ be a Banach space with $n(X)=1$.
\begin{enumerate}
\item If the pair $(X,X)$ has the BPBZp for compact operators and $X$ has property (nu) for compact operators, then $X$ has the BPBp-nu for compact operators.
\item If $X$ has the BPBp-nu for compact operators, then $X$ has property (nu) for compact operators. 
\item Whenever $(X,X)$ has the BPBZp for compact operators, property (nu) for compact operators is equivalent to the BPBp-nu for compact operators.
\end{enumerate}
\end{theorem}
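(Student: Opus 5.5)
The plan is to copy the proofs of Theorems \ref{nuimpliesBPBPnu} and \ref{basicnu} verbatim, checking at each step that every operator produced along the way can be taken compact. The hypothesis $n(X)=1$ plays the same role as before: for every $T\in\mathcal{K}(X)$ we have $\nu(T)=\|T\|$. So a compact $T$ with $\nu(T)=1$ is a norm-one compact operator. Also, $|x^*Tx|=\|T\|$ at a pair $(x,x^*)\in\Pi(X)$ gives simultaneously numerical radius attainment and the condition $\|S\|=|x_1^*Sx_1|$ appearing in the compact BPBZp.

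For (1), let $\eta$ witness the BPBZp for compact operators of $(X,X)$ and $\gamma$ witness property (nu) for compact operators. Given $\eps$, put $\eps'=\min\{\frac12\gamma(\eps/2),\eps/2\}$. Take $T\in\mathcal{K}(X)$ and $(x_0,x_0^*)\in\Pi(X)$ with $\nu(T)=1$ and $|x_0^*Tx_0|>1-\eta(\eps')$. Since $\|T\|=1$, the BPBZp for compact operators gives a compact $R$ and $(z,z^*)\in S_X\times S_{X^*}$ with $\|R\|=|z^*Rz|=1$, each at distance $<\eps'$ from $T$, $x_0$, $x_0^*$ respectively. The triangle inequality gives $\re z^*(z)>1-2\eps'\geq 1-\gamma(\eps/2)$. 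Property (nu) for compact operators, applied to $R\in S_{\mathcal{K}(X)}$, then yields a compact $S$ of norm one and $(x_1,x_1^*)\in\Pi(X)$ with $|x_1^*Sx_1|=1$ and all distances $<\eps/2$. Finally $\nu(S)=\|S\|=1=|x_1^*Sx_1|$ by $n(X)=1$, and summing the errors gives $<\eps$.

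For (2), start from a compact $T$ with $\|T\|=1$, $\re x_0^*(x_0)$ close to $1$ and $|x_0^*Tx_0|=1$. Apply the Bishop--Phelps--Bollob\'as theorem to obtain $(z,z^*)\in\Pi(X)$ near $(x_0,x_0^*)$. Then $\nu(T)=\|T\|=1$ and $|z^*Tz|>1-2\gamma$, so the BPBp-nu for compact operators yields a compact $S$ and $(x_1,x_1^*)\in\Pi(X)$ with $\nu(S)=|x_1^*Sx_1|=1$. Since $\|S\|=\nu(S)=1$, we get $S\in S_{\mathcal{K}(X)}$, as required.

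Part (3) is then immediate: (1) and (2) combine to give the equivalence under the standing hypotheses.

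The only point requiring care, and the one I expect to be the main obstacle, is staying within $\mathcal{K}(X)$. Every operator above is produced either by the hypothesized compact-version properties or is the original $T$, and no normalizations or rank-one perturbations are introduced. So compactness is preserved automatically, and the argument goes through unchanged.
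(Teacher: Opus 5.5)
Your proposal is correct and follows the paper's own route. The paper obtains this result by adapting the proofs of Theorems \ref{nuimpliesBPBPnu} and \ref{basicnu} to compact operators, exactly as you do, and compactness is preserved because every operator produced is either the original $T$ or comes from the compact-version hypotheses. A minor point: in (1) you don't need $n(X)=1$ to get $\nu(S)=1$, since $\|S\|=1=|x_1^*Sx_1|$ with $(x_1,x_1^*)\in\Pi(X)$ already forces it.
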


\begin{example}\label{cptnuex} The following spaces $X$ have property (nu) for compact operators.
\begin{enumerate}
\item $X=L_1(\mu)$ for every measure $\mu$.
\item $X=\mathcal{H}$ for a Hilbert space $\mathcal{H}$.
\item $X=C_0(L)$ where $L$ is a locally compact Hausdorff space.
\end{enumerate}
\end{example}

\subsection*{Acknowledgements} S.K.\ Kim has been supported by the National Research Foundation of Korea(NRF) grant funded by the Korea government (MSIT) [NRF-2020R1C1C1A01012267].  M.\ Mart\'{\i}n has been partially supported by the grant PID2021-122126NB-C31 funded by MICIU/AEI/10.13039/501100011033 and ERDF/EU, by ``Maria de Maeztu'' Excellence Unit IMAG reference CEX2020-001105-M, funded by MICIU/AEI/10.13039/501100011033, and by Junta de Andaluc\'{\i}a: grant FQM-0185. \'O.\ Rold\'{a}n has been supported by MICIU/AEI/10.13039/501100011033 and ERDF/EU through the grants PID2021-122126NB-C31 and PID2021-122126NB-C33.

\end{document}